\documentclass[11pt, reqno]{amsart}
\usepackage[T1]{fontenc}

\usepackage{amsmath}
\usepackage{amssymb}
\usepackage{amsthm}
\usepackage{dsfont}
\usepackage{color}
\usepackage{mathrsfs}
\usepackage{stmaryrd}
\usepackage[all]{xy}
\usepackage[mathcal]{eucal}
\usepackage{verbatim}  %%includes comment environment
\usepackage{fullpage}  %%smaller margins
\usepackage{multicol}
\usepackage{hyperref}
\usepackage{graphicx}
\usepackage{tikz}
\usepackage{tikz-cd}
\usepackage{quiver}
\usepackage{pgfornament}
\usetikzlibrary{matrix,arrows.meta,calc}
\usepackage{upgreek}
\usepackage{etoolbox}
\usepackage[backend=biber,style=alphabetic,maxnames=99,minnames=3]{biblatex}
\def\ZZ{\mathbb{Z}}
\def\QQ{\mathbb{Q}}

\def\NN{\mathbb{N}}

\def\BB{\mathbb{B}}
\def\FF{\mathbb{F}}

\def\RR{\mathbb{R}}
\def\CC{\mathbb{C}}

\def\lcm{\mathrm{lcm}}

\def\ap{\mathrm{Ap}}

\def\C{\mathcal{C}}

\def\o{\overline}
\def\endo{\mathrm{End}}
\def\trop{\NN_{\min}}
\def\rtrop{\RR_{\min}^+}

\def\ideal{\mathrm{Ideal}}

\def\b{\beta}

\def\dec{\mathrm{Dec}}
\def\mmax{m_{\max}}
\def\maxap{\mathrm{Max}}
\def\pf{\mathrm{PF}}
\def\t{\mathrm{t}}

\def\id{\mathrm{Ideal}}
\def\idfg{\id_{fg}}
\def\ns{\mathrm{NS}}
\def\lb{[\![}
\def\rb{]\!]}

\def\b0{\mathbf{0}}
\def\b1{\mathbf{1}}

\theoremstyle{definition}
\newtheorem{theorem}{Theorem}[section]
\newtheorem{proposition}[theorem]{Proposition}
\newtheorem{lemma}[theorem]{Lemma}
\newtheorem{corollary}[theorem]{Corollary}
\newtheorem{definition}[theorem]{Definition}
\newtheorem*{definition*}{Definition}

\newtheorem*{remark}{Remark}
\newtheorem{question}[theorem]{Question}
\newtheorem{example}[theorem]{Example}

\numberwithin{equation}{subsection}
\AtEndEnvironment{example}{\hfill\qedsymbol}

\begin{document}

\title{The Arithmetic of Semirings\\ Part I: Ideals}

\author{Jay Chen}
\address{Dept. of Mathematics and Statistics\\
Vassar College\\
Poughkeepsie, NY 12604\\}
\email{jaychen@vassar.edu}

\author{Trevor Hyde}
\address{Dept. of Mathematics and Statistics\\
Vassar College\\
Poughkeepsie, NY 12604\\}
\email{thyde@vassar.edu}

\author{Dorien Laurens}
\address{Dept. of Mathematics and Statistics\\
Vassar College\\
Poughkeepsie, NY 12604\\}
\email{dlaurens@vassar.edu}

\author{Jasper Piermarini}
\address{Dept. of Mathematics and Statistics\\
UMass Amherst\\
Amherst, MA 01003\\}
\email{jasperpierma@umass.edu}

\author{Harriet Simons}
\address{Dept. of Mathematics and Statistics\\
Vassar College\\
Poughkeepsie, NY 12604\\}
\email{hsimons@vassar.edu}

\begin{abstract}
    We study ideals in the semiring $\NN$ of natural numbers, with a focus on those which are lost when extending from $\NN$ to $\ZZ$.
    This leads to a new perspective on the classical theory of numerical semigroups, including the introduction of a natural multiplicative structure.
    We prove that unique factorization of ideals fails in $\NN$ on several levels, introduce a handful of new tropical multiplicative invariants of numerical semigroups, characterize integral closures of ideals in terms of Newton polygons, and analyze the behavior of classical numerical semigroup invariants with respect to the product operation.
\end{abstract}

\maketitle

\section{Introduction}

Number theory traditionally begins with the semiring $\NN$ of natural numbers: the home of the prime numbers and the fundamental theorem of arithmetic.
Algebraic expedience leads to the introduction of additive inverses and working instead with the ring $\ZZ$ of integers.
We keep extending our numbers from $\ZZ$ to $\QQ$ to $\RR$ and ultimately $\CC$.
And while the virtues of $\CC$ are universally known, at each stage of this extension a tradeoff occurs between useful, interesting structure and simplicity.
Number theory, to a large extent, lives in this space between $\ZZ$ and $\RR$, focused on the prime numbers which get diminished in passing from $\ZZ$ to $\QQ$, and lose their distinction in passing from $\QQ$ to $\RR$.
In this paper and its sequels, we take a step back to study the space between $\NN$ and $\ZZ$.

\begin{question}
\label{question: main}
    What is lost when passing from $\NN$ to $\ZZ$?
\end{question}

We find a wealth of structure hiding in this small gap waiting to be explored.
The integers $\ZZ$ are obtained from $\NN$ by adjoining an additive inverse of the unit $1$.
In this way we may view $\ZZ$ as an additive localization, analogous to the multiplicative localization which gives us $\QQ$ from $\ZZ$.
When localizing a ring, the main structures we lose are ideals and quotients.
For example, all the prime ideals of $\ZZ$ degenerate to the trivial ideal $\langle 1 \rangle$ in the field $\QQ$ and none of the quotient maps $\ZZ \to \FF_p$ extend to $\QQ$.
We begin our investigation of Question~\ref{question: main} by considering the ideals and quotients of $\NN$ which get lost in translation to $\ZZ$.
When localizing rings, these losses are related via the standard correspondence between ideals and quotients.
Without subtraction, however, this correspondence breaks down for semirings like $\NN$, leading to two rich stories.
In this paper we analyze the lost ideals, and in a sequel we study the lost quotients of $\NN$ and their analogs in number fields.

Every ideal in $\ZZ$ is, famously, principal.
The first twist is that this is far from true in $\NN$.
For example, the ideal
\[ 
    A := \langle 2, 3\rangle = \{2a + 3b : a, b \in \NN\} = \{0\} \cup\{n \in \NN : n \geq 2\}
\]
is clearly not principal.
Note that the generators 2 and 3 are coprime, hence generate a trivial ideal in $\ZZ$.
Thus $A$ is a nontrivial ideal in $\NN$ which becomes trivial after the introduction of $-1$.
The ideals in $\NN$ with this property are known as \emph{numerical semigroups} and have been intensively studied for more than a century, although not from this perspective.
\begin{itemize}
    \item Numerical semigroups arise naturally in algebraic geometry as the possible orders of vanishing of polynomial functions along branches at a singular point (see \cite[p.~5]{rosales2009numerical} for a list of references) and as the possible orders of poles of meromorphic functions at smooth points of algebraic curves (see \cite[Thm. 1.6.8]{Stichtenoth2009}).

    \item Semigroup rings $K[A]$ with $A$ a numerical semigroup are an important class of one-dimensional local rings in commutative algebra and are precisely the coordinate rings of one-dimensional affine toric varieties (see \cite[p.~19]{rosales2009numerical} for a list of references).

    \item Numerical semigroups conjecturally correspond to power-sum bases for the field of symmetric functions \cite{Kakeya1925, Kakeya1927}.
\end{itemize}

Our arithmetic perspective on numerical semigroups as ideals in $\NN$ suggests new structures and lines of inquiry.
In particular, the natural multiplicative structure numerical semigroups inherit as ideals appears to have been completely overlooked.
This leads to fundamental questions about factorizations, integrality, and the interaction between ideal multiplication and the many classical invariants defined for numerical semigroups.

\subsection{Results}

Ideals were introduced to arithmetic as a means of correcting the failure of unique factorization in extensions of the integers.
For example, 6 has two incompatible factorizations in the ring $\ZZ[\sqrt{-5}]$, but the ideal $\langle 6 \rangle$ does factor uniquely into prime ideals and in a way that resolves the ambiguity at the level of elements.
It is natural to ask whether this celebrated property of ideals in Dedekind domains extends to ideals in $\NN$.
Unfortunately, this is not the case.
For example, if
\begin{align*}
    A &:= \langle 5, 17\rangle,\\
    B &:= \langle 5, 17, 43\rangle,\\
    C &:= \langle 5, 11, 19, 23 \rangle,
\end{align*}
then one may check that each of these ideals of $\NN$ is irreducible and distinct yet
\(
    AC = BC.
\)
Note that the ideals $A$ and $B$ are quite similar; they agree up till 43.
This sort of failure is common and stems from the fact that the multiplicative semigroup of ideals in $\NN$ is not cancellative.
To see past this issue we consider the universal cancellative quotient $[\ideal(\NN)]$ of the semigroup of ideals.
Elements of $[\ideal(\NN)]$ are equivalence classes $[A]$ of ideals under the similarity relation $A \sim B$ if and only if $AC = BC$ for some nonzero ideal $C$.
This semigroup brings the unique factorization question back to life, but only briefly.

\begin{theorem}
\label{thm: UF fails intro}
    The semigroup $[\ideal(\NN)]$ of similarity classes of ideals in $\NN$ does not have unique factorization into irreducibles.
    Furthermore, the number of irreducible factors is also not well-defined.
\end{theorem}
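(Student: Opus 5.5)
The plan is to exhibit explicit ideals whose similarity classes satisfy a relation $[A_1][A_2] = [B_1][B_2][B_3]$ with all five classes irreducible. Such a relation shows both that factorization is not unique and that the number of irreducible factors is not well-defined. To certify irreducibility, and to tell classes apart, I will rely on two invariants of similarity classes.

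\textbf{First invariant: the multiplicity.} For a nonzero ideal $A$ let $m(A)$ be its least nonzero element. Since the least nonzero element of $AB$ is $m(A)m(B)$, we have $m(AB)=m(A)m(B)$. If $AC=BC$ then $m(A)m(C)=m(B)m(C)$, so $m(A)=m(B)$. Hence $m$ descends to a semigroup homomorphism $[\ideal(\NN)] \to (\NN_{>0},\cdot)$. Moreover $m(A)=1$ forces $1\in A$, so $A=\NN$, and $[\NN]$ is the identity class. Two consequences follow:
\begin{itemize}
\item every class with prime multiplicity is irreducible;
\item the number of irreducible factors of a class is at most $\Omega(m(A))$.
\end{itemize}
The multiplicity-$2$ ideals are exactly $\langle 2, 2k+1\rangle$ for $k\ge 0$, so their classes are completely explicit.

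\textbf{Second invariant: the integral closure.} To show that some multiplicity-$4$ class is irreducible, I will show it is not a product of two multiplicity-$2$ classes. For this I will show that similar ideals have the same integral closure, equivalently the same Newton polygon. The argument: view the lower convex hull of the generator data of $A$ (the points $(\log a, \cdot)$, or more concretely the tropical data of the generators) as a convex set. Products of ideals correspond to Minkowski sums of these hulls, and Minkowski sums of convex sets cancel. So $AC=BC$ gives $\mathrm{NP}(A)=\mathrm{NP}(B)$. I will take this as the characterization promised later in the paper. With it, a multiplicity-$4$ ideal $D$ is irreducible once $\mathrm{NP}(D)$ differs from every $\mathrm{NP}(\langle 2,a\rangle\langle 2,b\rangle)$. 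These products are $\langle 4,2a,2b,ab\rangle$, a short explicit family, so this check is a finite comparison.

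\textbf{Finding the relation.} I will then search for a relation of the form $\langle 2,3\rangle D \sim \langle 2,a\rangle\langle 2,b\rangle\langle 2,c\rangle$, with $D$ of multiplicity $4$ and irreducible by the test above. Natural candidates come from comparing $\langle 2,3\rangle^3=\langle 8,12,18,27\rangle$ with $\langle 2,3\rangle D$ for small $D=\langle 4,\dots\rangle$. The aim is to find $D$, and a common factor $C$ if needed, with equality of the actual ideals. Ideally the equality holds already without $C$, verified by listing the elements up to the conductor. Both sides have multiplicity $8$, so the lengths are $2$ and $3$, which proves the second assertion. The same relation also shows that factorization is not unique.

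\textbf{Main obstacle.} I expect two hard points. The first is locating a relation that actually holds, which is a finite but fiddly computer-style search over small generators. The second is rigorously proving irreducibility of the multiplicity-$4$ factor. If the Newton-polygon invariant is too coarse to separate $D$ from products of multiplicity-$2$ classes, I will fall back on a direct argument. Suppose $DC=\langle2,a\rangle\langle2,b\rangle C$. Comparing the smallest elements of $DC$ that are not multiples of $m(C)$ should pin down the odd generators $a,b$ and produce a contradiction.
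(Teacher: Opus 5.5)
Your proposal has a genuine gap. It never exhibits the relation, which is the entire content of the theorem, and the family you propose to search provably contains no valid example.

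Suppose $\langle 2,3\rangle D\sim\langle 2,a\rangle\langle 2,b\rangle\langle 2,c\rangle$ with $a,b,c\ge 3$ odd and $m_0(D)=4$. You may assume $D$ has an odd element, since otherwise $D=\langle 2\rangle D'$ is visibly reducible. Now apply the invariant you yourself invoke: $N_2$ turns products into Minkowski sums and is constant on similarity classes.
\begin{itemize}
\item $N_2(\langle 2,a\rangle)$ is a single edge from $(0,\log a)$ to $(1,\log 2)$ followed by a horizontal ray.
\item So the right-hand side has non-horizontal edges of slopes $-\log(a/2)$, $-\log(b/2)$, $-\log(c/2)$, each of width $1$.
\item The left-hand side contains the width-$1$ edge of slope $-\log(3/2)$ contributed by $\langle 2,3\rangle$.
\end{itemize}
Since $a,b,c\ge 3$, one of them must equal $3$. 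Cancelling the nonzero class $[\langle 2,3\rangle]$ in $[\ideal(\NN)]$ then gives $[D]=[\langle 2,b\rangle][\langle 2,c\rangle]$.

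So every $D$ satisfying your relation is reducible. Your Newton-polygon irreducibility test is guaranteed to fail on it, because the relation itself forces $N_2(D)=N_2(\langle 2,b\rangle\langle 2,c\rangle)$. A similar argument, using $m_2$ to handle the principal ideal $\langle 2\rangle$ (which also has multiplicity $2$, contrary to your list), shows more: there is no length-$2$ versus length-$3$ relation among irreducible classes of multiplicity $8$ at all. Neither your search nor your fallback ``direct argument'' can succeed.

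The missing idea is an irreducibility certificate for classes of composite multiplicity that is compatible with such relations. The paper uses the prime-to-$p$ multiplicity $m_p(A)=\min(A\setminus p\NN)$. It is multiplicative and cancellative, so $[C]$ is irreducible whenever $m_p(C)$ is prime. The paper then constructs the relation directly rather than searching for it:
\begin{itemize}
\item Take $A=\langle 2,3\rangle$, $B=\langle 2,3^k\rangle$, an odd prime $d\in A^kB$, $C=2B+\langle d\rangle$ and $D=2A^k+\langle d\rangle$.
\item Then $m_2(C)=m_2(D)=d$, so both classes are irreducible; $[A]$ and $[B]$ are irreducible because their multiplicity is $2$.
\item Since $d\in A^kB$, every generator of $dA^k$ and of $dB$ other than $3^kd$ already lies in $2A^kB$.
\item Hence $A^kC=2A^kB+\langle 3^kd\rangle=BD$.
\end{itemize}
This gives $k+1$ versus $2$ irreducible factors. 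Its smallest instance, $k=2$, lives at multiplicity $16$, consistent with the obstruction at multiplicity $8$ described above.
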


We prove this result as Theorem~\ref{thm: UF fails}, where we provide two infinite families demonstrating the extent of the failure.
En route to proving Theorem~\ref{thm: UF fails intro} we introduce a number of new multiplicative invariants of ideals and numerical semigroups.
These invariants have a tropical flavor, each capturing some interaction a prime $p$ and the archimedean place.
Let $p \in \NN$ be a prime and let $A \subseteq \NN$ be an ideal.
\begin{itemize}
    \item For each $k \geq 0$ define
    \(
        m_{p,k}(A) := \min\{a\in A : v_p(a) \leq k\}.
    \)

    \item Let $\trop \lb T\rb$ be the semiring of formal power series in the variable $T$ with coefficients in $\trop$, the tropical min-times semiring of natural numbers with $\infty$.
    The additive identity in $\trop \lb T\rb$ is $\sum_{k\geq 0} \infty T^k$ and the multiplicative identity is $1 + \sum_{k\geq 1}\infty T^k$.
    In Lemma~\ref{lemma: dec is semiring} we prove that the subset $\dec \subseteq \trop\lb T\rb$ of series with weakly decreasing coefficient sequences forms a semiring (although with a different multiplicative identity than in $\trop\lb T\rb$).
    Let $S_p : \ideal(\NN) \to \dec$ be defined by
    \[
        S_p(A) := \sum_{k\geq 0}m_{p,k}(A)T^k.
    \]

    \item Let $\lambda \geq 1$ be a real number and let $\rtrop$ be the tropical min-times semiring of nonnegative real numbers with $\infty$.
    Define $\Phi_{p,\lambda} : \ideal(\NN) \to \rtrop$ by
    \[
        \Phi_{p,\lambda}(A) := \min_{k\geq 0} m_{p,k}(A)\lambda^k.
    \]

    \item The \textbf{$p$th Newton polygon} of $A$ is defined by
    \[
        N_p(A) := \text{lower convex hull of } \{(k, \log m_{p,k}(A)) : k \geq 0\}.
    \]
    Given convex sets $X$ and $Y$, let 
    \(
        X + Y = \{x + y : x \in X, y \in Y\}
    \)
    denote their Minkowski sum.
    Let $\C$ denote the semigroup of convex subsets of $\RR^2$ with respect to Minkowski sum.
\end{itemize}
Together with these new invariants, we also recall $m_0(A)$, the smallest positive element of $A$.
The key feature of these invariants is that they respect multiplication of ideals, hence provide essential tools for analyzing this nascent multiplicative structure.

\begin{theorem}
\label{thm: invariants intro}
    Let $p \in \NN$ be prime, let $\lambda \geq 1$ be real, and let $A, B \subseteq \NN$ be ideals.
    Then
    \begin{enumerate}
        \item $m_0(AB) = m_0(A)m_0(B)$,
        \item $m_{p,0}(AB) = m_{p,0}(A)m_{p,0}(B)$,
        \item $S_p(AB) = S_p(A)S_p(B)$,
        \item $\Phi_{p,\lambda}(AB) = \Phi_{p,\lambda}(A)\Phi_{p,\lambda}(B)$,
        \item $N_p(AB) = N_p(A) + N_p(B)$.
    \end{enumerate}
    Furthermore, $m_0, m_{p,0}, \Phi_{p,\lambda}$, and $N_p$ are cancellative invariants, hence are well-defined on similarity classes of ideals.
\end{theorem}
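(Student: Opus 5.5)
The plan is to reduce everything to a description of the product ideal. $AB$ is the ideal generated by the products $ab$ with $a\in A$, $b\in B$, so every $x\in AB$ has the form $x=\sum_i a_ib_i$ with $a_i\in A$, $b_i\in B$. Each invariant is a minimum over elements satisfying some condition. So for each one I will show that the minimum is attained on a single product $ab$, and then control the valuation or size of that product. Throughout I assume $A,B\neq 0$; the zero ideal gives $\infty$ everywhere and is trivially compatible with the formulas.

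\emph{Parts (1) and (2).} For (1), take a positive $x=\sum a_ib_i\in AB$. Some term $a_ib_i$ is positive, so $x\ge a_ib_i\ge m_0(A)m_0(B)$. Conversely $m_0(A)m_0(B)$ is itself an element of $AB$. For (2), the same argument works using the ultrametric property: if $p\nmid x$ then some $a_ib_i$ is prime to $p$. Hence $p\nmid a_i$ and $p\nmid b_i$, and $x\ge a_ib_i\ge m_{p,0}(A)m_{p,0}(B)$. Conversely, the product of the two minimizers is prime to $p$.

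\emph{Part (3).} I will use the multiplication on $\dec$ from Lemma~\ref{lemma: dec is semiring}. I expect its $T^k$ coefficient to be $\min_{i+j=k} m_{p,i}(A)m_{p,j}(B)$; if the lemma's product is phrased slightly differently, I will first reconcile it with this formula using monotonicity.

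\emph{Upper bound.} If $a\in A$ has $v_p(a)\le i$ and $b\in B$ has $v_p(b)\le j$, then $ab\in AB$ and $v_p(ab)\le i+j$. This gives $m_{p,k}(AB)\le m_{p,i}(A)m_{p,j}(B)$ whenever $i+j=k$.

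\emph{Lower bound.} Take $x=\sum a_ib_i$ with $v_p(x)\le k$. By the ultrametric inequality some term has $v_p(a_ib_i)\le k$. Write $i'=v_p(a_i)$ and $j'=v_p(b_i)$, so $i'+j'\le k$. Then $x\ge a_ib_i\ge m_{p,i'}(A)m_{p,j'}(B)$. Since $k\mapsto m_{p,k}$ is weakly decreasing, I can raise $i'$ to $k-j'$ without increasing the bound. This lands in the minimum over $i+j=k$.

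\emph{Part (4).} This follows formally from (3):
\[
\Phi_{p,\lambda}(AB)=\min_k \min_{i+j=k} m_{p,i}(A)\lambda^i\, m_{p,j}(B)\lambda^j=\Phi_{p,\lambda}(A)\Phi_{p,\lambda}(B).
\]

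\emph{Part (5).} Put $f(k)=\log m_{p,k}(A)$, and similarly $g$ for $B$ and $h$ for $AB$. Part (3) says $h$ is the infimal convolution of $f$ and $g$. Taking logs in (4) gives $\log\Phi_{p,\lambda}=\min_k\big(f(k)+k\log\lambda\big)$, which is the support function of $N_p(A)$ in the direction $(\log\lambda,1)$. Because $f$ is weakly decreasing and bounded below (it eventually stabilizes), the lower hull $N_p(A)$ is cut out exactly by its supporting lines of slope $-\log\lambda\le 0$, that is, by the directions $\lambda\ge 1$. Support functions are additive under Minkowski sum, and (4) says these support functions add. So $N_p(AB)$ and $N_p(A)+N_p(B)$ have the same support function in every relevant direction, and being closed convex sets of the same shape they coincide. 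This is the step I expect to be the main obstacle. The care needed is in fixing the precise meaning of ``lower convex hull'' (the region above the hull, unbounded to the right and upward), so that the two sets are indeed determined by exactly these support directions; the point at infinity, where the sequence stabilizes, must be handled in the same way.

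\emph{Cancellativity.} The invariants $m_0$, $m_{p,0}$ and $\Phi_{p,\lambda}$ take finite positive values on nonzero ideals, and positive reals under multiplication form a cancellative monoid. So $AC=BC$ with $C\ne 0$ forces $f(A)=f(B)$ for each of these $f$. For $N_p$, I will not invoke Rådström cancellation for Minkowski sums. Instead, $N_p(A)$ is determined by the family $\{\Phi_{p,\lambda}(A)\}_{\lambda\ge 1}$ via the support-function description above, and that family is already cancellative.
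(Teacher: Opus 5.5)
Your argument for (1)--(4), and for the cancellativity of $m_0$, $\Phi_{p,\lambda}$ and $N_p$, is the paper's. For $\Phi_{p,\lambda}$ and $N_p$ you recover $N_p$ from the family $\{\Phi_{p,\lambda}\}_{\lambda\ge 1}$, which is exactly Lemma~\ref{lemma: support function translation}.

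For (5) your route differs mildly. The paper reads $N_p(AB)=N_p(A)+N_p(B)$ directly off the min-plus convolution of Lemma~\ref{lemma: min-times convolution}, as the vertices of a Minkowski sum. You instead deduce it from (4) by additivity of support functions. That puts (5) and the cancellation of $N_p$ on the same support-function description. The cost is the bookkeeping you already flag: both $N_p(AB)$ and $N_p(A)+N_p(B)$ are polyhedra with recession cone $\RR_{\ge 0}^2$, so each is determined by its support values for $\lambda\ge 1$. The left vertical edge comes out as the limit $\lambda\to\infty$.

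One step is wrong as written. $m_{p,0}$ does \emph{not} take finite values on all nonzero ideals. If $p\mid\gcd(C)$, for example $C=p\NN$, then $m_{p,0}(C)=\infty$. Since $\infty$ is not cancellable in $\trop$, the equation $m_{p,0}(A)\,m_{p,0}(C)=m_{p,0}(B)\,m_{p,0}(C)$ does not give $m_{p,0}(A)=m_{p,0}(B)$. The conclusion is still true, but you need an extra step:
\begin{itemize}
\item Write $C=cC'$ with $c\ge 1$ and $C'$ a numerical semigroup, by Lemma~\ref{lemma: N ideal is principal times NS}.
\item Cancel the nonzero integer $c$ from $cAC'=cBC'$ to get $AC'=BC'$.
\item Now $m_{p,0}(C')$ is a positive integer, so it can be cancelled.
\end{itemize}
Alternatively, $m_{p,0}(A)=\lim_{\lambda\to\infty}\Phi_{p,\lambda}(A)$ for every ideal $A$, so well-definedness of $m_{p,0}$ on similarity classes follows from that of $\Phi_{p,\lambda}$. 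The paper's own cancellation argument for $m_p$ also passes over this point, since it tacitly treats $m_p(C)$ as a positive integer.
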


Theorem~\ref{thm: invariants intro} is proved in pieces as Lemma~\ref{lemma: m_p multiplicative}, Lemma~\ref{lemma: S_p hom}, and Proposition~\ref{prop: Newton polygon}.
Note that $\ideal(\NN)$ forms a semiring with respect to addition and multiplication of ideals.
The maps in Theorem~\ref{thm: invariants intro} may all be upgraded to semiring homomorphisms.
Cancellative multiplicative invariants give us insights into the multiplicative structure of $[\ideal(\NN)]$.
For instance, we may use these invariants to prove irreducibility of ideals and similarity classes of ideals.

\begin{proposition}
    Let $p \in \NN$ be prime and let $A \subseteq \NN$ be an ideal.
    \begin{enumerate}
        \item If either $m_0(A)$ or $m_{p,0}(A)$ is prime, then the similarity class $[A]$ is irreducible.
        \item If $A$ is a numerical semigroup and any of the following conditions holds, then $A$ is irreducible.
        \begin{enumerate}
            \item $A$ has two minimal generators,
            \item $A$ has a prime minimal generator,
            \item $m_{p,0}(A)$ and $m_{p,\ell}(A)$ are coprime, where $\ell$ is minimal such that $m_{p,\ell}(A) < m_{p,0}(A)$.
        \end{enumerate}
    \end{enumerate}
\end{proposition}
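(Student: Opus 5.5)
Every part of the statement rests on the multiplicativity and cancellativity of $m_0$ and $m_{p,0}$ from Theorem~\ref{thm: invariants intro}. The plan is to show that a putative nontrivial factorization would force some multiplicative invariant to factor nontrivially. Throughout, the identity ideal is $\NN=\langle 1\rangle$, the unique ideal $U$ with $m_0(U)=1$. So a nonzero ideal $B$ with $m_0(B)=1$ contains $1$ and equals $\NN$. Likewise $m_{p,0}(B)=1$ forces $1\in B$.

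\textbf{Part (1).} Suppose $[A]=[B][C]=[BC]$. Since $m_0$ and $m_{p,0}$ are well defined on similarity classes and multiplicative, we get $m_0(A)=m_0(B)m_0(C)$ and $m_{p,0}(A)=m_{p,0}(B)m_{p,0}(C)$. If either value is prime, one factor, say $m_0(B)$ or $m_{p,0}(B)$, equals $1$. Then $B=\NN$, so $[B]$ is the identity class and $[A]$ is irreducible. I will also check that $[A]$ is not itself a unit. If $[A]$ were the identity, then $m_0(A)=1$ and $m_{p,0}(A)=1$, contradicting primality.

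\textbf{Part (2).} Now $A=BC$ is a numerical semigroup, and $B,C$ are nonzero ideals. First observe that $BC\subseteq\langle\gcd(B)\gcd(C)\rangle$, so $\gcd(B)=\gcd(C)=1$. Hence $B$ and $C$ are numerical semigroups as well.

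\emph{Case (b).} Let $q$ be a prime minimal generator of $A$. Every element of $BC$ is a sum of products $bc$ with $b\in B\setminus\{0\}$ and $c\in C\setminus\{0\}$. Minimality of $q$ in $A$ means $q$ is not a sum of two nonzero elements of $A$, so $q=bc$ for a single such product. Primality of $q$ then gives $b=1$ or $c=1$, so $B=\NN$ or $C=\NN$.

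\emph{Case (a).} Write $A=\langle a_1,a_2\rangle$ with $a_1<a_2$ coprime. I would compare minimal generating sets. The minimal generators of $BC$ are among the products $b_ic_j$ of minimal generators of $B$ and $C$. If $B=\langle b_1,\dots\rangle$ and $C=\langle c_1,\dots\rangle$ with $b_1=m_0(B)>1$ and $c_1=m_0(C)>1$, then $a_1=b_1c_1$. Since $B\ne\NN$ and $\gcd(B)=1$, $B$ has a second generator $b_2$ with $\gcd(b_1,b_2)$ strictly smaller than $b_1$; similarly $C$ has $c_2$. Then $BC$ contains $b_1c_1$, $b_2c_1$, $b_1c_2$. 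The two-generated numerical semigroup $A$ must contain $b_2c_1$ and $b_1c_2$, which are not multiples of $a_1$. In $\langle a_1,a_2\rangle$, every element not divisible by $a_1$ is $\ge a_2$. I will try to show that $b_2c_1$ and $b_1c_2$ cannot both be $\equiv$ multiples of $a_2$ modulo $a_1$ in the required way. The cleanest route is probably via the Apéry set: elements of $A$ in a fixed residue class mod $a_1$ have the form $ka_2+ta_1$. One then gets $c_1\mid a_2$ from $b_2c_1$ and $b_1\mid a_2$ from $b_1c_2$ (both using $b_1c_1=a_1$ and minimality), forcing $\gcd(a_1,a_2)>1$. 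This case is the main obstacle, and the divisibility bookkeeping may need adjustment.

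\emph{Case (c).} Apply $S_p$ multiplicatively. By Theorem~\ref{thm: invariants intro}, $m_{p,0}(A)=m_{p,0}(B)m_{p,0}(C)$. The first drop index $\ell$ and the value $m_{p,\ell}$ should also behave multiplicatively. Multiplication in $\dec$ is tropical convolution, so $m_{p,\ell}(A)=\min_{i+j=\ell}m_{p,i}(B)m_{p,j}(C)$. With $\ell_B,\ell_C$ the first drop indices of $B$ and $C$, the first drop of $A$ occurs at $\ell=\min(\ell_B,\ell_C)$. There $m_{p,\ell}(A)$ equals $m_{p,\ell}(B)m_{p,0}(C)$ or $m_{p,0}(B)m_{p,\ell}(C)$. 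Either way, $m_{p,0}(C)$ or $m_{p,0}(B)$ divides both $m_{p,0}(A)$ and $m_{p,\ell}(A)$. Coprimality forces that factor to be $1$, so the corresponding ideal is $\NN$.
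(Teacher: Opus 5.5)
Your arguments for (1), (2)(b) and (2)(c) are correct and match the paper's: multiplicativity and cancellativity of $m_0$ and $m_{p,0}$, minimal generators of $BC$ lying among the products $b_ic_j$, and the min-times convolution evaluated at the first drop index. The gap is in (2)(a). You leave it unfinished, and the route you sketch cannot produce the divisibility you want. Membership gives $b_2c_1 = ka_2 + ta_1$ with $k\ge 1$. Since $c_1\mid a_1$ and $\gcd(c_1,a_2)=1$, reducing modulo $c_1$ yields only $c_1\mid k$, never $c_1\mid a_2$. Indeed $c_1\mid a_2$ is impossible outright: it is the contradiction itself, not something residue or Ap\'ery information about $b_2c_1$ can deliver. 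Similarly, combining membership with the fact that $a_2$ is the least element of $A$ not divisible by $a_1$ only gives the inequalities $a_2\le b_1c_2$ and $a_2\le b_2c_1$, not a common factor.

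The missing step is to show that $a_2$ \emph{equals} a product involving $b_1$ or $c_1$. Take $b_2, c_2$ to be the second-smallest \emph{minimal generators} of $B$ and $C$, not merely elements with $b_1\nmid b_2$. Then every product $b_ic_j$ of minimal generators other than $b_1c_1$ is at least $\mu := \min(b_1c_2, b_2c_1)$. Hence every element of $BC$ below $\mu$ is a multiple of $b_1c_1=a_1$, while $\mu$ is not, since $b_1\nmid b_2$ and $c_1\nmid c_2$. So $\mu$ is the least element of $A$ outside $a_1\NN$, i.e.\ $\mu=a_2$. Then $b_1>1$ or $c_1>1$ divides $\gcd(a_1,a_2)=1$, a contradiction. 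This is the paper's argument, phrased there as ``$\mu$ must be a minimal generator of $BC$, hence equals $n$.''

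Your own two observations also close the case. Since $a_2$ is a minimal generator of $BC$, we have $a_2=b_ic_j$ for some minimal generators. If $i,j\ge 2$, then $a_2 = b_ic_j > b_1c_2 \ge a_2$, using that $b_1c_2\in A$ is not divisible by $a_1$. So $i=1$ or $j=1$, and $a_2$ shares a nontrivial factor with $a_1$.
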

This result is proved as Lemma~\ref{lemma: irred criteria} and Proposition~\ref{prop: irred criteria}.
To effectively work with the universal cancellative quotient $[\ideal(\NN)]$ one needs a method for testing similarity of ideals.
We show that every similarity class contains a unique maximal element.
If $A \subseteq \NN$ is an ideal, then we say $a \in \NN$ is \textbf{integral} over $A$ if there is some nonzero ideal $B\subseteq \NN$ such that $aB \subseteq AB$.
The \textbf{integral closure} $\o A$ is the set of all natural numbers integral over $A$.

\begin{proposition}
    If $A, B \subseteq \NN$ are ideals, then
    \begin{enumerate}
        \item $\o A$ is an ideal,
        \item $A \sim \o A$, and,
        \item $A \sim B$ if and only if $\o A = \o B$.
    \end{enumerate}
\end{proposition}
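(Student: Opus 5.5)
The plan is to prove the three parts in order, using only the definitions of ideal product and similarity together with one structural fact: every ideal of $\NN$ is finitely generated. Here the product $AB$ is the ideal of finite sums of products $ab$. We will also use that a product of nonzero ideals of $\NN$ is nonzero, since $\NN$ has no zero divisors.

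For part (1), note first that $0 \in \o A$ trivially. Closure under multiplication by $n \in \NN$ is immediate: if $aB \subseteq AB$, then $(na)B \subseteq n(AB) \subseteq AB$, because $AB$ is an ideal. For closure under addition, suppose $aB \subseteq AB$ and $a'B' \subseteq AB'$ with $B, B'$ nonzero, and put $C = BB'$, which is nonzero. Every element of $(a+a')C$ has the form $\sum(a+a')b_ib'_i = \sum ab_ib'_i + \sum a'b'_ib_i$. The first sum lies in $(AB)B' = AC$ and the second lies in $(AB')B = AC$. Since $AC$ is closed under addition, $(a+a')C \subseteq AC$, so $a+a' \in \o A$.

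For part (2), first dispose of $A = 0$. If $aB \subseteq 0\cdot B = 0$ with $B \neq 0$, then $a = 0$, so $\o 0 = 0$. For general $A$, use that $\o A$ is an ideal of $\NN$ by part (1). It is therefore a submonoid of $(\NN,+)$, and hence finitely generated: after dividing by its gcd it is a numerical semigroup, which has finitely many minimal generators. This finite generation is the one genuinely non-formal input, and I expect it to be the main point to justify.

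So write $\o A = \langle a_1,\dots,a_r\rangle$ with $a_iB_i \subseteq AB_i$ and each $B_i$ nonzero, and set $C = B_1\cdots B_r \neq 0$. Then $a_iC \subseteq AC$ for every $i$. Since $AC$ is an ideal and the $a_i$ generate $\o A$, this gives $\o A\,C \subseteq AC$. Conversely, $A \subseteq \o A$ (take $B = \NN$), so $AC \subseteq \o A\,C$. Hence $AC = \o A\,C$, that is, $A \sim \o A$.

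For part (3), suppose first that $AC = BC$ with $C \neq 0$, and let $a \in \o A$ with $aD \subseteq AD$. Then $aDC \subseteq ADC = BDC$ and $DC \neq 0$, so $a \in \o B$. By symmetry $\o A = \o B$. Conversely, if $\o A = \o B$, part (2) gives $A \sim \o A = \o B \sim B$, and it remains to check that $\sim$ is transitive. If $AC = EC$ and $ED = BD$, then $ACD = ECD = EDC = BDC$, with $CD \neq 0$, so $A \sim B$.
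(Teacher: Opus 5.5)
Your proof is correct and follows the paper's own route. Part (1) and the forward direction of (3) are the same witness-multiplication arguments as in Lemma~\ref{lemma: int closure is ideal} and Lemma~\ref{lemma: int closure props}(1). For (2) you use finite generation of $\o A$ exactly where the paper invokes Noetherianity. The only difference is cosmetic: you multiply the witnesses $B_i$ into a single nonzero $C$ with $AC = \o A\,C$, whereas the paper adjoins the generators one at a time and chains $A_i \sim A_{i+1}$ via Lemma~\ref{lemma: int closure props}(2).
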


See Lemma~\ref{lemma: int closure is ideal} and Proposition~\ref{prop: noetherian implies similar to int closure}.
Thus the problem of testing similarity reduces to the problem of calculating integral closures of ideals, and further simplifications reduce this from ideals to numerical semigroups.
One of our main results is an effective characterization of integral closures in terms of the $p$-Newton polygons of the ideal.

\newpage

\begin{theorem}
\label{thm: integral closure intro}
    Let $A \subseteq \NN$ be an ideal and let $a \in \NN$.
    The following are equivalent
    \begin{enumerate}
        \item $a \in \o A$,
        \item $a^k \in A^k$ for some $k\geq 1$,
        \item $(v_p(a), \log a) \in N_p(A)$ for all primes $p$,
        \item If $A$ is nonzero, then $\Phi_{p,\lambda}(A) \leq a\lambda^{v_p(a)}$ for all primes $p$ and all $\lambda \geq 1$.
    \end{enumerate}
\end{theorem}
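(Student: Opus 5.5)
The plan is to prove the cycle $(1)\Rightarrow(4)\Leftrightarrow(3)\Rightarrow(2)\Rightarrow(1)$. The zero ideal is handled separately and trivially: both $\o{0}$ and $0^k$ are $\{0\}$, so $a=0$ is the only element in either case.

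\textbf{$(2)\Rightarrow(1)$.} This is the usual trick, which needs no subtraction. Suppose $a^k\in A^k$ and set
\[
B := A^{k-1} + aA^{k-2} + \cdots + a^{k-1}\NN .
\]
This is a nonzero ideal. Multiplying by $a$ gives
\[
aB = aA^{k-1} + a^2A^{k-2} + \cdots + a^{k}\NN .
\]
Every summand except the last lies in $AB$. The last summand $a^k\NN$ lies in $A^k\subseteq AB$. Hence $aB\subseteq AB$, so $a$ is integral over $A$.

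\textbf{$(1)\Rightarrow(4)$.} This uses the multiplicativity of $\Phi_{p,\lambda}$ from Theorem~\ref{thm: invariants intro}, together with the observation that $\Phi_{p,\lambda}$ is order-reversing: $X\subseteq Y$ implies $\Phi_{p,\lambda}(Y)\le\Phi_{p,\lambda}(X)$, because each $m_{p,k}$ is a minimum over a set.
\begin{itemize}
\item For the principal ideal, $m_{p,k}(a\NN)=a$ when $k\ge v_p(a)$ and $\infty$ otherwise, so $\Phi_{p,\lambda}(a\NN)=a\lambda^{v_p(a)}$.
\item From $aB\subseteq AB$ we get $\Phi(A)\Phi(B)=\Phi(AB)\le\Phi(aB)=a\lambda^{v_p(a)}\Phi(B)$.
\item We then cancel $\Phi(B)$, which is finite and positive because $B\neq0$ (when $a=0$ there is nothing to prove).
\end{itemize}

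\textbf{$(3)\Leftrightarrow(4)$.} This is convex duality. Taking logarithms,
\[
\log\Phi_{p,\lambda}(A)=\min_k\bigl(\log m_{p,k}(A)+k\log\lambda\bigr),
\]
which is the support function of $N_p(A)$ in the direction $(\log\lambda,1)$ with $\log\lambda\ge0$. Since $m_{p,k}$ is weakly decreasing in $k$, $N_p(A)$ should be read as the region above the lower hull, which is stable under moving up and to the right. Such a closed convex region is exactly the intersection of its supporting half-planes with inward normals $(\mu,1)$, $\mu\ge0$. The condition $\log\Phi_{p,\lambda}(A)\le v_p(a)\log\lambda+\log a$ for all $\lambda\ge1$ then says precisely that $(v_p(a),\log a)$ lies in every such half-plane. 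I would verify the point about closedness and about which normals occur from the precise definition in Proposition~\ref{prop: Newton polygon}.

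\textbf{$(3)\Rightarrow(2)$: the main obstacle.} Here I would use that $A$ is finitely generated, so $A=gS$ with $g=\gcd(A)$ and $S$ a numerical semigroup. Then $A^k=g^kS^k$, where $S^k$ is a numerical semigroup whose conductor grows at most linearly in $k$ (roughly $k$ times a constant). The argument would run as follows.
\begin{enumerate}
\item At each prime $p\mid g$, condition (3) forces $v_p(a)\ge v_p(g)$, because $m_{p,k}(A)=\infty$ for $k<v_p(g)$. So $g\mid a$, and after dividing by $g$ we may assume $A=S$ is a numerical semigroup.
\item If $a\ge m_0(S)$ and $a$ lies well above every polygon, the easy case is $a^k\ge$ the conductor of $S^k$ for large $k$, so $a^k\in S^k$ automatically.
\item The genuinely hard case is when $(v_p(a),\log a)$ lies on an edge of some $N_p(S)$. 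There I would write the point as a rational convex combination $\sum (n_i/k)\,(k_i,\log m_{p,k_i})$ of vertices. Then $\prod m_{p,k_i}^{n_i}\in S^k$ has the same size as $a^k$ and $p$-adic valuation at most $v_p(a^k)$.
\item The remaining task is to show that $a^k$ is reachable inside $S^k$. I would first try to exploit that finitely many primes $p$ impose nontrivial constraints: only those dividing elements below the conductor. Then I would combine the witness products for these primes with small additive adjustments, using that $S^k$ contains all large multiples. The expected difficulty is controlling the adjustments uniformly in $k$, which may require replacing $k$ by a suitable multiple.
\end{enumerate}
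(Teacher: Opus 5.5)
Your implications $(2)\Rightarrow(1)$, $(1)\Rightarrow(4)$ and $(3)\Leftrightarrow(4)$ are sound. The first is the paper's Proposition~\ref{prop: power certificate} and the last is Lemma~\ref{lemma: support function translation}. For $(1)\Rightarrow(4)$, using monotonicity and multiplicativity of $\Phi_{p,\lambda}$ directly is a tidy shortcut that avoids the paper's route through $A\sim\o A$ and Proposition~\ref{prop: Newton polygon}.

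The gap is in $(3)\Rightarrow(2)$, and it starts from a false premise: the conductor of $S^k$ does not grow linearly in $k$. Already $m_0(S^k)=m_0(S)^k$, and for example $F(\langle 2,3\rangle^k)=3^{k+1}-2^{k+2}$. The true rate is $\log F(S^k)/k\to\log\mmax(S)$ (Theorem~\ref{thm: frob asymptotic}), which the paper proves using Ap\'ery sets of powers of two-generator semigroups and Selmer's formula. So $a^k$ eventually exceeds $F(S^k)$ when $a>\mmax(S)$, and eventually does not when $a<\mmax(S)$. Your step 2 is therefore Corollary~\ref{cor: mmax criteria}, and it needs this exponential asymptotic. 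The hypothesis $a\ge m_0(S)$ is far from sufficient: for instance $9\notin\o{\langle 8,11\rangle}$. Whenever $a<\mmax(S)$ (not only when $V_p(a)$ sits on an edge), $a^k$ lies exponentially far below the conductor of $S^k$. Your step 4 hopes to patch per-prime witness products by small additive adjustments, using that $S^k$ contains all large integers. In this range that has nothing to stand on, so it is not yet an argument.

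The missing idea is not to aim at $a^k$ inside $S^k$ directly, but to build an auxiliary semigroup to which the easy case applies.
\begin{enumerate}
\item Take your witnesses with inequality, since the point need only lie above a segment. For $p\nmid m_0(A)$, take $b_p=m_0(A)$ and $e_p=1$.
\item Set $e=\lcm_p e_p$ and replace $b_p$ by $b_p^{e/e_p}$. Then $b_p\in A^e$, $b_p\le a^e$ and $v_p(b_p)\le e\,v_p(a)$.
\item Put $g=\gcd_p b_p$. Since $v_p(g)\le v_p(b_p)\le e\,v_p(a)$ for every $p$, we have $a^e=ga'$ with $a'\in\NN$.
\item Then $B:=\langle b_p\rangle=gB'\subseteq A^e$, where $B'$ is a numerical semigroup all of whose generators are $\le a'$.
\item By Lemma~\ref{lemma: m_pk among generators}, $\mmax(B')\le a'$. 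Corollary~\ref{cor: mmax criteria} then yields ${a'}^k\in{B'}^k$ for some $k$, whence $a^{ek}=g^k{a'}^k\in B^k\subseteq A^{ek}$.
\end{enumerate}
The gcd normalization is what turns ``$a$ beats each prime separately'' into ``$a'$ is at least $\mmax$ of a single semigroup''. Without it, and without the $\mmax$-rate asymptotic, your $(3)\Rightarrow(2)$ does not close.
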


This is proved as Lemma~\ref{lemma: support function translation} and Theorem~\ref{thm: integral closure}.
The Newton polygons $N_p(A)$ only impose nontrivial conditions for primes $p$ dividing the multiplicity $m_0(A)$.
Hence Theorem~\ref{thm: integral closure intro} provides an efficient algorithm for computing the integral closure of an ideal in $\NN$.
We implemented this algorithm as well as the calculation of all the invariants introduced above as part of a Python class for ideals in $\NN$.
This class, together with documentation and the code verifying all the computations and examples in this paper may be found at \url{https://github.com/tghyde/numerical-ideals}.

There are many classically defined invariants of numerical semigroups, such as the genus, Frobenius number, Ap\'ery set, and type (see Sections~\ref{sec: NS} and \ref{sec: asymptotics} for definitions).
We study the behavior of these invariants with respect to ideal multiplication, and in particular their asymptotic growth under powers $A^k$.
Let $\mmax(A) := \max_p m_p(A)$.

\begin{theorem}
\label{thm: asymptotic intro}
    Let $A$ be a numerical semigroup, let $F(A)$ and $g(A)$ denote the Frobenius number and genus of $A$, respectively.
    Then
    \[
        \lim_{k\to \infty} \frac{\log F(A^k)}{k} = \lim_{k\to \infty} \frac{\log g(A^k)}{k} = \log \mmax(A).
    \]
\end{theorem}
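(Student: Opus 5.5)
The proof splits into a lower bound and an upper bound for the two limits, linked by the standard inequalities $g(A^k) \le F(A^k)+1$ and $g(A^k) \ge \tfrac{1}{2}(F(A^k)+1)$. I take $m_p(A)$ to mean $m_{p,0}(A) = \min\{a \in A : p \nmid a\}$, which is finite because $A$ is a numerical semigroup. If $p \nmid m_0(A)$, then $m_p(A) = m_0(A)$, so $\mmax(A)$ is a maximum over finitely many primes. Write $M = \mmax(A)$. It then suffices to prove $\liminf_k \frac{1}{k}\log g(A^k) \ge \log M$ and $\limsup_k \frac{1}{k}\log F(A^k) \le \log M$.

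\emph{Lower bound.} Fix a prime $p$ with $m_p(A) = M$. By Lemma~\ref{lemma: m_p multiplicative}, $m_{p,0}(A^k) = M^k$. The direct reason is that an element of $A^k$ is a sum of products $a_1\cdots a_k$ with $a_i \in A$. If such a sum is prime to $p$, then some summand is prime to $p$, so every factor of that summand is prime to $p$, and that summand is at least $M^k$. Hence every integer in $[1, M^k)$ that is prime to $p$ is a gap of $A^k$. There are at least $(1-1/p)(M^k-1) \ge \tfrac12(M^k-1)$ such integers, so $g(A^k) \ge \tfrac12(M^k-1)$. Together with $F(A^k) \ge g(A^k)$, or simply by taking the largest integer below $M^k$ that is prime to $p$, this gives the $\liminf$ bound for both invariants.

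\emph{Upper bound.} I would show that $F(A^k) \le C\,k^{c}M^{k}$ for some constants $C, c$ independent of $k$. Any subexponential error factor suffices. Let $m = m_0(A)$ and let $p_1,\dots,p_r$ be the primes dividing $m$. Every product of $k$ elements of $T = \{a \in A : a \le M\}$ lies in $A^k$ and is at most $M^k$. Moreover $T$ contains $m$ and each $m_{p_i}(A)$, so $\gcd(T)=1$. I would run an Apéry-set argument with respect to $m^k \in A^k$: it is enough to produce, for each residue class $r \bmod m^k$, an element of $A^k$ congruent to $r$ of size $O(k^c M^k)$. By the Chinese remainder theorem this reduces to the prime powers $p_i^{k v_{p_i}(m)}$. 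Modulo such a power, I would build elements of prescribed $p_i$-adic valuation and residue as short sums of products $m^{j}\,a_{1}\cdots a_{k-j}$ with $a_\ell \in T$. The factor $m^j$ controls the valuation, and unit factors prime to $p_i$, such as powers of $m_{p_i}(A)$, adjust the residue. The aim is to use only polynomially many (in $k$) summands, each of size at most $M^k$.

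The main obstacle is this upper bound. The naive Schur-type bound $F \le (\min S)(\max S)$ for the semigroup generated by the $k$-fold products $S \subseteq A^k$ gives only $M^{2k}$. Likewise, a general Apéry bound using up to $m^k$ summands loses a factor $m^k$. The crux is therefore to show that each residue mod $m^k$ can be reached by a sum of subexponentially many $k$-fold products. My first attempt would be induction on $k$ via $A^{k+1} = A\cdot A^k$, tracking the residues of $A^k$ modulo $p^{j}$ for each $p \mid m$ and using that multiplication by $m_p(A)$ permutes units mod $p^j$. If that fails, I would instead use Theorem~\ref{thm: integral closure intro} and the Newton polygons $N_p(A)$ to control the elements of $A^k$ near the threshold $M^k$.
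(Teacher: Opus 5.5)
\textbf{Verdict: there is a genuine gap in the upper bound.}

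Your overall structure is sound. Reducing to $\liminf_k \frac1k\log g(A^k)\ge\log M$ and $\limsup_k \frac1k\log F(A^k)\le\log M$ is valid. Your lower bound is the paper's argument, stated a bit more carefully, since $M^k$ itself lies in $A^k$. You correctly isolate what the upper bound needs: a representative in $A^k$ of every residue class modulo $m^k$, of size $M^{k+o(k)}$. But you give no construction. Three specific problems:
\begin{itemize}
\item The CRT step is not really a reduction. To assemble a prescribed residue modulo $m^k$ from pieces built modulo each $p_i^{k v_{p_i}(m)}$, each piece must also vanish modulo the other prime powers, i.e.\ be divisible by $(m/p_i^{v_{p_i}(m)})^k$. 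Producing such elements of $A^k$ of size $O(M^k)$ is as hard as the original problem.
\item Your fallback is circular. The implication (3)$\Rightarrow$(1) of Theorem~\ref{thm: integral closure} rests on Corollary~\ref{cor: mmax criteria}, which is deduced from this very theorem.
\item Even granting the fallback, knowing $a^k\in A^k$ for individual $a$ (with $k$ depending on $a$) gives no uniform bound on $F(A^k)$.
\end{itemize}

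The missing idea is to work inside the two-generator subsemigroups $A_i:=\langle m,n_i\rangle\subseteq A$, where $n_i:=m_{p_i}(A)$, whose powers are explicitly understood.
\begin{itemize}
\item Set $d_i:=\gcd(m,n_i)$, which is prime to $p_i$. Then $A_i=d_iA_i'$ with $A_i'=\langle m/d_i,\,n_i/d_i\rangle$.
\item By Theorem~\ref{thm: two-gen powers}(3), $\ap({A_i'}^k)$ consists of the digit expansions $\sum_{l=1}^k j_l(m/d_i)^{k-l}(n_i/d_i)^l$ with $0\le j_l<m/d_i$.
\item Let $W_i:=d_i^k\ap({A_i'}^k)\subseteq A^k$. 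Its elements are sums of $O(k)$ products $m^{k-l}n_i^l$, which is exactly the shape you were aiming for. The geometric decay of the digit expansion gives $\max W_i\le c_in_i^k\le c_iM^k$.
\item $W_i$ represents every class of the subgroup $d_i^k\ZZ/m^k\ZZ$. Since $p_i\nmid d_i$, we have $\gcd(d_1^k,\dots,d_r^k,m^k)=1$, so these subgroups sum to $\ZZ/m^k\ZZ$.
\item Hence $W_1+\cdots+W_r\subseteq A^k$ represents every residue modulo $m^k$ by an element of size at most $(\sum_i c_i)M^k$.
\end{itemize}
This additive sum of subgroups, rather than a CRT splitting, is what makes the argument work. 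The paper then concludes via Selmer's formula and $F\le 2g-1$. Your direct route $F(A^k)=\max\ap(A^k)-m^k$ would work equally well once this bound is in hand.
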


\noindent This is proved as Theorem~\ref{thm: frob asymptotic}.
This result is a key precursor to our characterization of integral closures (Theorem~\ref{thm: integral closure intro}).
In Section~\ref{sec: symmetry} we study the asymptotic growth of the type invariant along powers $A^k$.
Unlike Theorem~\ref{thm: asymptotic intro}, we were unable to determine a definitive formula or even to prove that $\lim_{k\to\infty} \log \t(A^k)/k$ always exists.
However, for a certain special class of numerical semigroups we determine the growth rate explicitly (see Proposition~\ref{prop: quadratic reduction}).

\subsection{Motivation}

Our primary motivation for this paper and its sequels stems from work of Borger \cite{borger2009lambdaringsfieldoneelement, Borger2016}, Borger and Grinberg \cite{borger2016boolean}, and Borger and Jun \cite{borger2025facets}.
Borger posits that we ought to view positivity as local integrality with respect to the archimedean place.
The kernel of this idea appears, for example, in class field theory as part of the definition of the narrow class group.
Hence by working with rings we miss some archimedean local phenomena.
Borger \cite{Borger2016} illustrates how rich this extra structure may be by constructing extensions of the Witt vector functors from rings to semirings, which make substantive connections with algebraic combinatorics.
Borger and Jun \cite{borger2025facets} develop the categorical foundations of module theory over semirings, highlighting the fact that the formal machinery of algebraic geometry works for semirings and contains strictly more information than its standard counterpart.
While Borger and his collaborators develop algebraic geometry over semirings, our aim is to develop the algebraic number theory of semirings.

For example, every ideal $A \subseteq \NN$ factors as $A = gB$ where $g \in \NN$ and $B$ is a numerical semigroup.
Under the extension map $\NN \to \ZZ$ the ideal $A$ maps to the principal ideal $\langle g \rangle$.
Hence we may view the numerical semigroup $B$ is the archimedean local component of $A$.
This perspective recasts the rich theory of numerical semigroups as the study of ideals in $\NN$ which are purely archimedean.
The hope is that through fleshing out the arithmetic of semirings we may begin to see more parallels between the archimedean place and the finite places.

\subsection{Leiden Disclosure}
We aim to use AI tools following the recommendations of the Leiden Declaration \cite{leiden_declaration_2026}.
We used Google's Gemini and OpenAI's ChatGPT throughout our research process for exploring ideas, writing code for our Python class, assisting with computational experiments, finding connections to the literature, checking our logic, and correcting typos, grammatical issues, and notational inconsistencies.
That said, every word of this paper is written and vetted directly by its authors.
All the mistakes are our own.

\subsection{Acknowledgments}
We gratefully acknowledge the support of the URSI program at Vassar College and the Asprey endowment.
The second author acknowledges support from an AMS-Simons Research Enhancement Grant for Primarily Undergraduate Institution Faculty.
We thank James Borger, Pedro Garc\'ia-S\'anchez, and Nathan Kaplan for valuable feedback on an earlier draft.
Finally, the second author would like to thank Jonathan Gerhard for his sustained interest in and enthusiasm for this project during its long gestation.

\section{Ideals and Integrality in Semirings}
\label{sec: ideals in semirings}

In this paper we use the following standard definitions of a (commutative) semiring.

\begin{definition}
    A \textbf{semiring} $R$ is a set with two associative, commutative binary operations called addition $+$ and multiplication $\cdot$ with identities 0 and 1 respectively, such that multiplication distributes over addition and $a \cdot 0 = 0$ for all $a \in R$.
    We require our semiring homomorphisms to respect both 0 and 1.
\end{definition}

\begin{definition}
    An \textbf{$R$-module} is a commutative monoid $M$ together with a semiring homomorphism $\rho : R \to \endo(M)$.
    Given $r \in R$ and $m \in M$ we write $rm := \rho(r)(m)$.
\end{definition}

\begin{definition}
    An \textbf{ideal} of $R$ is an $R$-submodule $A$ of $R$.
    Equivalently, $A \subseteq R$ is a subset closed under addition such that $ra \in A$ for all $r \in R$ and $a \in A$.
\end{definition}

The standard operations of addition, multiplication, and intersection of ideals are defined the same for ideals in a semiring as they are for ideals in a commutative ring.
Let $\ideal(R)$ denote the set of all ideals of $R$.
With the operations of addition and multiplication of ideals, $\ideal(R)$ forms a semiring with identity elements $\langle 0 \rangle$ and $\langle 1 \rangle$.
Note that $A + A = A$ for all $A \in \ideal(R)$, so $\ideal(R)$ is a $\BB$-algebra where $\BB = \{0,1\}$ is the Boolean semiring.

\subsection{Universal Cancellative Quotient}

Let $G$ be a multiplicative commutative semigroup.
A \textbf{multiplicative subset} $M \subseteq G$ is any subset closed under multiplication.
If $a, b \in G$ and $M$ is a multiplicative subset, then we write $a \sim_M b$ if there exists $c \in M$ such that $ac = bc$.
The relation $\sim_M$ defines a congruence on $G$; let $[G]_M$ denote the quotient of $G$ by $\sim_M$.
The semigroup $[G]_M$ is the universal quotient of $G$ for which elements of $M$ are cancelable.
We apply this construction to the multiplicative semigroup of ideals in a semiring.

\begin{lemma}
    Let $R$ be a semiring and let $M \subseteq \ideal(R)$ be a multiplicative subset of ideals.
    Then $[\ideal(R)]_M$ forms a semiring with operations defined representativewise.
\end{lemma}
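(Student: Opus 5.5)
The plan is to show that $\sim_M$ is a congruence for both ideal addition and ideal multiplication on $\ideal(R)$. The semiring axioms for $[\ideal(R)]_M$ then descend from those of $\ideal(R)$, exactly as for any quotient of an algebraic structure by a congruence.

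First I would record that $\sim_M$ is an equivalence relation. Reflexivity and symmetry are immediate, provided $M$ is nonempty; if $M=\emptyset$ the relation is empty and I would tacitly assume $M\neq\emptyset$, or adjoin $\langle 1\rangle$, which changes nothing. Transitivity uses that $M$ is multiplicative: if $AC=BC$ and $BD=ED$ with $C,D\in M$, then $A(CD)=B(CD)=E(CD)$ and $CD\in M$.

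For compatibility with multiplication, suppose $A\sim_M B$ via $AC=BC$ with $C\in M$. Then for any ideal $E$ we have $(AE)C=(AC)E=(BC)E=(BE)C$, using associativity and commutativity of ideal multiplication. Hence $AE\sim_M BE$.

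For compatibility with addition, I would use distributivity of ideal multiplication over ideal sum, $(A+E)C=AC+EC$. This is the one place to check something concrete: every element of $(A+E)C$ is a finite sum of products $(a+e)c=ac+ec$, and conversely $AC+EC\subseteq (A+E)C$ since $A,E\subseteq A+E$. Then $(A+E)C=AC+EC=BC+EC=(B+E)C$, so $A+E\sim_M B+E$.

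Since both operations respect the congruence in each variable separately, they respect it in both variables jointly: if $A\sim A'$ and $B\sim B'$, then $AB\sim A'B\sim A'B'$, and similarly for sums. Therefore the operations $[A]+[B]:=[A+B]$ and $[A][B]:=[AB]$ are well defined. Associativity, commutativity, the identities $[\langle 0\rangle]$ and $[\langle 1\rangle]$, distributivity, and $[A][\langle 0\rangle]=[\langle 0\rangle]$ all hold because they hold for representatives in the semiring $\ideal(R)$. None of these steps is a real obstacle; the only point needing care is the distributivity identity for ideals used in the additive step.
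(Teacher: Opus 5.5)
Your proof is correct and follows the paper's approach: the essential point is the distributivity $(A+E)C=AC+EC$, which makes $\sim_M$ respect addition, while compatibility with multiplication (which the paper takes as already known from the general semigroup construction) is routine. The only cosmetic difference is that the paper changes both summands at once using the single witness $CD\in M$, whereas you change one summand at a time and chain the steps via transitivity; your remark about the degenerate case $M=\emptyset$ is a fair edge case that the paper passes over.
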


\begin{proof}
    All we need to check is that $\sim_M$ respects addition.
    %We already know that similarity respects multiplication, so all that remains is to show that it respects ideal addition.
    Suppose that $A_1 \sim_M A_2$ and $B_1 \sim_M B_2$ with $C, D \in M$ such that $A_1 C = A_2 C$ and $B_1 D = B_2 D$.
    Then
    \[
        (A_1 + B_1)CD = A_1CD + B_1CD 
        = (A_1C)D + (B_1D)C
        = (A_2C)D + (B_2D)C
        = (A_2 + B_2)CD.
    \]
    Hence $A_1 + B_1 \sim_M A_2 + B_2$.
\end{proof}

\subsection{Integral Closure}
\label{sec: integral closure}

Let $A$ be an ideal in a semiring $R$.
We say $A$ is a \textbf{zero divisor} if there is a nonzero ideal $B$ such that $AB = \langle 0 \rangle$.
Note that $A$ is a zero divisor if and only if $A$ has a nonzero annihilator.
Let $\idfg(R) \subseteq \id(R)$ denote the multiplicative subset of finitely generated ideals which are not zero divisors.

\begin{definition}
\label{def: ideal integral}
    We say $r \in R$ is \textbf{integral} over $A$ if there exists a $B \in \idfg(R)$ such that $rB \subseteq AB$.
    The \textbf{integral closure} of $A$ is $\o A := \{r \in R : r \text{ is integral over $A$}\}$.
\end{definition}

For ideals in a ring, Definition~\ref{def: ideal integral} is one of several equivalent definitions of what it means for an element to be integral over an ideal.
For general semirings, these equivalencies fracture into a series of strict implications.
The various shades of integrality may all have their virtues, but Definition~\ref{def: ideal integral} emerges as the right notion for the purpose of studying factorization of ideals in $\NN$.
In a sequel we will study these variations on integrality in detail.

\begin{lemma}
\label{lemma: int closure is ideal}
    If $A \subseteq R$ is an ideal, then $\o A$ is an ideal.
\end{lemma}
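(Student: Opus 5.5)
We need to show that $\o A$ contains $0$, is closed under addition, and is closed under multiplication by elements of $R$.

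For $0$ and for scalar multiples the argument is immediate. For $0$, we need some $B \in \idfg(R)$ at all. The unit ideal $\langle 1 \rangle$ is finitely generated, and it is not a zero divisor, since $\langle 1\rangle C = C$. Because $0\cdot B = \langle 0\rangle \subseteq AB$, we get $0 \in \o A$. Now let $x \in \o A$ with witness $B \in \idfg(R)$, so that $xB \subseteq AB$, and let $r \in R$. Then $(rx)B = r(xB) \subseteq r(AB) \subseteq AB$, since $AB$ is an ideal. So the same $B$ witnesses $rx \in \o A$.

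The substantive step is closure under addition, and here I would take a product of witnesses. Suppose $x, y \in \o A$ with witnesses $B, C \in \idfg(R)$, so $xB \subseteq AB$ and $yC \subseteq AC$. Set $D = BC$; I claim it witnesses $x + y$.

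First, $D \in \idfg(R)$, because $\idfg(R)$ is a multiplicative subset. Products of finitely generated ideals are finitely generated, by the pairwise products of generators. If $BC$ were a zero divisor, say $BCE = \langle 0\rangle$ with $E \neq \langle 0 \rangle$, then $B(CE) = \langle 0\rangle$. Either $CE \neq \langle 0\rangle$, contradicting that $B$ is not a zero divisor, or $CE = \langle 0 \rangle$, contradicting that $C$ is not a zero divisor.

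Next, $(x+y)D \subseteq AD$. The ideal $D = BC$ consists of finite sums $\sum b_i c_i$ with $b_i \in B$ and $c_i \in C$. Since $AD$ is closed under addition, it suffices to check a single product $bc$. We have
\[
(x+y)bc = (xb)c + (yc)b.
\]
Here $xb \in AB$, so $(xb)c \in ABC = AD$. Likewise $yc \in AC$, so $(yc)b \in ACB = AD$. Their sum therefore lies in $AD$.

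The main point of care is precisely this addition step. Without subtraction, one cannot combine the two separate integrality relations directly. Instead one uses a common witness $BC$, and the distributivity of the semiring carries the argument. One must also remember to verify that $BC$ stays inside $\idfg(R)$, which is exactly the zero-divisor argument above.
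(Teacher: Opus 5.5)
Your proof is correct and follows the paper's argument: the same witness $B$ handles scalar multiples, and the product of the two witnesses handles sums via $(x+y)bc = (xb)c + (yc)b$. You also explicitly check that $0 \in \o A$ (using the witness $\langle 1\rangle$) and that $BC \in \idfg(R)$; the paper leaves both points implicit.
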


\begin{proof}
    Suppose $r_1, r_2 \in \o A$ and let $B_1, B_2 \in \idfg(R)$ be such that $r_i B_i \subseteq A B_i$.
    Then $B_1B_2 \in \idfg(R)$ and
    \[
        (r_1 + r_2)B_1B_2 \subseteq r_1 B_1B_2 + r_2 B_1B_2 
        \subseteq (AB_1)B_2 + (AB_2)B_1 = AB_1B_2.
    \]
    Hence $r_1 + r_2 \in \o A$.
    Suppose $s \in \o A$ and $B \in \idfg(R)$ is an ideal such that $sB \subseteq AB$.
    If $r \in R$, then 
    \[
        rsB \subseteq rAB \subseteq AB,
    \]
    since $A$ is an ideal.
    Thus $rs \in \o A$ and $\o A$ is an ideal of $R$.
\end{proof}

From now on we write $A \sim B$ as a shorthand for $A \sim_{\idfg(R)} B$.
That is, $A \sim B$ if and only if there is some $C \in \idfg(R)$ such that $AC = BC$.
We simply refer to the relation $\sim$ as \textbf{similarity} of ideals.

\begin{lemma}
\label{lemma: int closure props}
	Let $R$ be a semiring, let $A, B \subseteq R$ be ideals, and let $r \in R$.
	\begin{enumerate}
		\item If $A \sim B$, then $\o A = \o B$.
		\item $A + \langle r \rangle \sim A$ if and only if $r \in \o A$.
		\item If $A \subseteq B$, then $\o A \subseteq \o B$.	
	\end{enumerate}
\end{lemma}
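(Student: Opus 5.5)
We prove the three parts directly from Definition~\ref{def: ideal integral} and the definition of $\sim$ with respect to $\idfg(R)$. Throughout we use two facts. First, $\idfg(R)$ is closed under products: a product of finitely generated ideals is finitely generated, and if $BC$ were a zero divisor, say $BCD=\langle 0\rangle$ with $D\neq \langle 0\rangle$, then either $CD=\langle 0\rangle$, which contradicts $C$ not being a zero divisor, or $CD\neq\langle 0\rangle$ kills $B$, which contradicts $B$ not being a zero divisor. Second, ideal multiplication is monotone and distributes over sums. Neither needs subtraction, so the ring arguments transfer verbatim.

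For (1), suppose $AC=BC$ with $C\in\idfg(R)$, and let $r\in\o A$ with witness $D\in\idfg(R)$, so $rD\subseteq AD$. Then $CD\in\idfg(R)$ and
\[
rCD\subseteq ACD=BCD,
\]
so $r\in\o B$. By symmetry $\o A=\o B$.

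For (3), take $r\in\o A$ with witness $D$. Then $rD\subseteq AD\subseteq BD$, so the same $D$ witnesses $r\in\o B$.

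For (2), forward direction: suppose $A+\langle r\rangle\sim A$. By (1), $\o{A+\langle r\rangle}=\o A$. Since $r\in A+\langle r\rangle$, it is enough to know that any ideal $I$ satisfies $I\subseteq \o I$. This holds with witness $\langle 1\rangle$, because $\langle 1\rangle$ is finitely generated and not a zero divisor (as $\langle 1\rangle D=D$), and $x\langle 1\rangle\subseteq I\langle 1\rangle=I$ for $x\in I$. Hence $r\in\o A$.

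For (2), reverse direction: suppose $rD\subseteq AD$ with $D\in\idfg(R)$. Then
\[
(A+\langle r\rangle)D = AD + rD = AD,
\]
since $\langle r\rangle D=rD$ (products of ideals are sums of products, and $rD$ is already an ideal) and $rD\subseteq AD$. Therefore $A+\langle r\rangle\sim A$ via $D$.

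The only real obstacle is confirming that $\idfg(R)$ is multiplicative and contains $\langle 1\rangle$. We checked both above, and the rest is immediate.
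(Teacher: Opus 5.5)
Your proof is correct and matches the paper's argument for (1), (3), and the reverse direction of (2). The one deviation is the forward direction of (2): you pass through (1) and the inclusion $I \subseteq \o I$ (witnessed by $\langle 1\rangle$), while the paper reads the witness off directly, since $(A+\langle r\rangle)C = AC$ already gives $rC \subseteq AC$. Your explicit check that $\idfg(R)$ is closed under products and contains $\langle 1\rangle$ supplies a detail the paper leaves implicit.
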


\begin{proof}
	(1) %\label{lemma: similarity implies same integral closure}
	Let $C \in \idfg(R)$ be such that $AC = BC$.
    	If $r \in \o A$, then there is some $D \in \idfg(R)$ such that $rD \subseteq AD$.
    	Hence $rCD \subseteq ACD = BCD$.
    	Therefore $r \in \o B$, which proves that $\o A \subseteq \o B$.
    	Thus by symmetry we conclude that $\o A = \o B$.
	
	(2) %\label{lemma: similarity condition}
	Suppose that $B \in \idfg(R)$ is an ideal such that 
   	\[  
        		AB = (A + \langle r\rangle)B = AB + rB.
    	\]
    	This is equivalent to $rB \subseteq AB$, which is precisely what it means for $r \in \o A$.
    	Thus $A + \langle r \rangle \sim A$ if and only if $r \in \o A$.

	(3)
	Suppose $r \in \o A$ and let $C \in \idfg(R)$ be an ideal such that $rC \subseteq AC$.
        Then $A \subseteq B$ implies that $AC \subseteq BC$, hence $rC \subseteq BC$.
        Therefore $r \in \o B$.
\end{proof}

We say a semiring $R$ is \textbf{Noetherian} if every ascending chain of ideals eventually stabilizes.

\begin{proposition}
\label{prop: noetherian implies similar to int closure}
    If $R$ is Noetherian, then $A \sim \o A$ for all ideals $A \subseteq R$.
    In particular, $A \sim B$ if and only if $\o A = \o B$.
\end{proposition}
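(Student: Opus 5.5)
We use the Noetherian hypothesis to show that $\o A$ is obtained from $A$ by adjoining finitely many integral elements, and then apply Lemma~\ref{lemma: int closure props}(2) one element at a time.

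\emph{Step 1: $\o A$ is finitely generated over $A$.} By Lemma~\ref{lemma: int closure is ideal}, $\o A$ is an ideal. Taking $B = \langle 1\rangle \in \idfg(R)$ in Definition~\ref{def: ideal integral} shows $A \subseteq \o A$. Since $R$ is Noetherian, every ideal is finitely generated; otherwise one could pick elements one at a time to build a strictly ascending chain. So we may write $\o A = A + \langle r_1, \dots, r_n\rangle$ with each $r_i \in \o A$. For instance, take all generators of $\o A$.

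\emph{Step 2: induction.} Set $A_0 = A$ and $A_i = A_{i-1} + \langle r_i\rangle$. We show $A_i \sim A$ for each $i$. Suppose $A_{i-1} \sim A$. By Lemma~\ref{lemma: int closure props}(1), $\o{A_{i-1}} = \o A$, so $r_i \in \o{A_{i-1}}$. Then Lemma~\ref{lemma: int closure props}(2) gives $A_i = A_{i-1} + \langle r_i\rangle \sim A_{i-1}$. Since $\sim$ is a congruence, and in particular transitive, $A_i \sim A$. At $i = n$ this yields $\o A = A_n \sim A$. An alternative is to combine the witnesses $B_i$ for each $r_i$ into the single witness $B_1\cdots B_n \in \idfg(R)$, exactly as in the proof of Lemma~\ref{lemma: int closure is ideal}. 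The induction is cleaner.

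\emph{Step 3: the "in particular" clause.} If $A \sim B$, then $\o A = \o B$ by Lemma~\ref{lemma: int closure props}(1). Conversely, if $\o A = \o B$, then $A \sim \o A = \o B \sim B$, so $A \sim B$ by transitivity.

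The only point needing care is Step 1, where Noetherianity is used to ensure $\o A$ is finitely generated. Without it, infinitely many adjunctions could not be handled by one finitely generated witness. Transitivity of $\sim$ is not an issue: $\idfg(R)$ is multiplicatively closed, so products of witnesses remain valid witnesses.
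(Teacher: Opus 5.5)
Your proof is correct and follows the paper's argument: Noetherianity gives $\o A = A + \langle r_1,\dots,r_n\rangle$, Lemma~\ref{lemma: int closure props}(2) adjoins the $r_i$ one at a time, and the final claim follows from part (1) and transitivity of $\sim$. You also make explicit two small points the paper leaves implicit: $A \subseteq \o A$ (via the witness $\langle 1\rangle$), and each $r_i$ lies in $\o{A_{i-1}}$.
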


\begin{proof}
    Since $R$ is Noetherian, there are finitely many elements $b_1, \ldots, b_n \in \o A$ such that $A + \langle b_1, \ldots, b_n\rangle = \o A$.
    Let $A_0 := A$ and then recursively define $A_{i+1} := A_i + \langle b_{i+1}\rangle$.
    Then Lemma~\ref{lemma: int closure props} implies that $A_i \sim A_{i+1}$ for each $i$.
    Therefore $A \sim \o A$.
    This together with Lemma~\ref{lemma: int closure props}(1) proves the final claim.
\end{proof}

Proposition~\ref{prop: noetherian implies similar to int closure} implies that if $R$ is Noetherian, every similarity class in $\id(R)$ contains a maximal element, namely the common integral closure of all the elements in the similarity class.
Thus in this case we can, in principle, determine similarity between ideals by checking if their integral closures are equal.
The following proposition is a useful sufficient condition for integrality.

\begin{proposition}
\label{prop: power certificate}
	Let $A \in \idfg(R)$ and let $r \in R$.
    	If there is some $k \geq 1$ such that $r^k \in A^k$, then $r$ is integral over $A$.
\end{proposition}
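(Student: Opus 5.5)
The natural witness is the ideal $B := A^{k-1}$ when $k \geq 2$; when $k = 1$ we have $r \in A$, so $rA \subseteq A\cdot A$ and $B = A$ works. I will show directly that
\[
    rA^{k-1} \subseteq A^k = A\cdot A^{k-1}
\]
and that $A^{k-1} \in \idfg(R)$. Definition~\ref{def: ideal integral} then immediately gives that $r$ is integral over $A$.

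\textbf{Membership.} First, $A^{k-1}$ lies in $\idfg(R)$. It is finitely generated, because products of finitely generated ideals are generated by the finitely many products of generators. It is not a zero divisor: if $A^{k-1}C = \langle 0\rangle$ with $C \neq \langle 0 \rangle$, peel off one factor at a time. Since $A(A^{k-2}C) = \langle 0\rangle$ and $A$ is not a zero divisor, we get $A^{k-2}C = \langle 0\rangle$, and inductively $C = \langle 0\rangle$, a contradiction. (Equivalently, $\idfg(R)$ is a multiplicative subset, as was already used in the proof of Lemma~\ref{lemma: int closure is ideal}.)

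\textbf{The containment.} A plain semiring gives no analogue of the ring-theoretic determinant trick, and $r^k \in A^k$ alone does not put $rA^{k-1}$ inside $A^k$. So I would change the witness to
\[
    B := \sum_{i=0}^{k-1} r^i A^{k-1-i} = A^{k-1} + rA^{k-2} + \cdots + r^{k-1}R.
\]
Then
\[
    rB = \sum_{i=1}^{k} r^{i}A^{k-i} = \sum_{i=1}^{k-1} r^i A^{k-i} + r^kR .
\]
Each term with $1\le i\le k-1$ equals $A\cdot r^iA^{k-1-i}$, which lies in $AB$. The last term satisfies $r^kR \subseteq A^k = A\cdot A^{k-1} \subseteq AB$. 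Hence $rB \subseteq AB$, and this containment uses only additivity and the ideal property, with no subtraction.

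\textbf{Membership of the modified witness (main obstacle).} The remaining step, which I expect to be the real obstacle, is to check that this $B$ lies in $\idfg(R)$. Finite generation is clear, since $B$ is a finite sum of finitely generated ideals. For the non-zero-divisor property, suppose $BC = \langle 0\rangle$ for some ideal $C$. Then $A^{k-1}C \subseteq BC = \langle 0\rangle$, so $A^{k-1}C = \langle 0\rangle$, and by the peeling argument above $C = \langle 0 \rangle$. The key point is that $A^{k-1} \subseteq B$, so any annihilator of $B$ also annihilates $A^{k-1}$. Thus $B\in\idfg(R)$ and $rB\subseteq AB$, so $r \in \o A$.
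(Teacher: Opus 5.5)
Your final witness $B=\sum_{i=0}^{k-1} r^iA^{k-1-i}$ is exactly $(A+\langle r\rangle)^{k-1}$, the ideal the paper uses, and your check that $rB\subseteq AB$ is the same binomial-expansion computation as the paper's; the paper phrases it as $(A+\langle r\rangle)^k\subseteq A(A+\langle r\rangle)^{k-1}$ and then invokes Lemma~\ref{lemma: int closure props}(2). So the proposal is correct and is essentially the paper's proof, and your argument that $B\in\idfg(R)$ via $A^{k-1}\subseteq B$ supplies a justification the paper only asserts.
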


\begin{proof}
    Suppose that $r^k \in A^k$.
    Then
    \[
        (A + \langle r\rangle)^k = A^k + rA^{k-1} + \ldots + r^{k-1}A + \langle r^k\rangle
        \subseteq A^k + rA^{k-1} + \ldots + r^{k-1}A
        = A(A + \langle r\rangle)^{k-1}.
    \]
    Our assumption that $A \in \idfg(R)$ implies $(A + \langle r\rangle)^{k-1} \in \idfg(R)$.
    Thus $A + \langle r \rangle \sim A$, which by Lemma~\ref{lemma: int closure props} implies $r$ is integral over $A$.
\end{proof}

In Theorem~\ref{thm: integral closure} we show that Proposition~\ref{prop: power certificate} characterizes integral closures of ideals in $\NN$, which leads to an algorithm for calculating integral closures.
Calculating integral closures in a general semiring seems to be a challenging problem.
Given an ideal $A$ and an element $r \in R$, in order to show that $r \in \o A$ one needs to construct an ideal $B \in \idfg(R)$ such that $rB \subseteq AB$.
This is essentially equivalent to constructing a matrix with entries in $A$ which has $r$ as an eigenvalue.
If such an ideal exists, we may construct it using iterated colon ideals.

\begin{definition}
    If $A, B \subseteq R$ are ideals, then the \textbf{colon ideal} $(A : B)$ is defined by
    \[
        (A : B) := \{r \in R : rB \subseteq A\}.
    \]
\end{definition}

Now let $A \in \id(R)$ and let $r \in R$ be a candidate for $\o A$.
Consider the recursive sequence of ideals defined by $B_0 = R$ and
\[
    B_{k+1} = (AB_k : \langle r\rangle) = \{b \in R : rb \in AB_k\}.
\]
The sequence $B_k$ is monotone decreasing, $B_k \supseteq B_{k+1}$ for all $k\geq 0$.

\begin{lemma}
\label{lemma: colon ideal}
    With notation as above, if there exists $B \in \idfg(R)$ such that $rB \subseteq AB$, then $B \subseteq \bigcap_{k\geq 0} B_k$.
\end{lemma}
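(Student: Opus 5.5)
We argue by induction on $k$ that $B \subseteq B_k$ for every $k \geq 0$; this gives $B \subseteq \bigcap_{k\geq 0} B_k$ immediately. Only the inclusion $rB \subseteq AB$ is used. The hypothesis $B \in \idfg(R)$ plays no role here beyond supplying such a $B$.

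The base case is trivial, since $B_0 = R$ and $B \subseteq R$.

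For the inductive step, suppose $B \subseteq B_k$ and let $b \in B$.
\begin{itemize}
    \item By hypothesis $rb \in rB \subseteq AB$.
    \item Since ideal multiplication is monotone in each factor, $B \subseteq B_k$ gives $AB \subseteq AB_k$.
    \item Hence $rb \in AB_k$, which is exactly the condition $b \in (AB_k : \langle r\rangle) = B_{k+1}$.
\end{itemize}
Thus $B \subseteq B_{k+1}$, completing the induction.

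There is no real obstacle. The one point to check is monotonicity of ideal products: if $B \subseteq B'$, then every finite sum $\sum a_i b_i$ with $a_i \in A$ and $b_i \in B$ also lies in $AB'$, so $AB \subseteq AB'$. We also use that $(AB_k : \langle r\rangle)$ is described elementwise as $\{b \in R : rb \in AB_k\}$. This holds because $\langle r\rangle = Rr$ and $AB_k$ is an ideal, so $rb \in AB_k$ implies $r(sb) \in AB_k$ for all $s \in R$.
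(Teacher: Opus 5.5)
Your proof is correct and is essentially the same as the paper's: induction on $k$, with the base case $B \subseteq R = B_0$ and the step $rB \subseteq AB \subseteq AB_k$ giving $B \subseteq B_{k+1}$. Your extra checks, monotonicity of ideal products and the elementwise description of $(AB_k : \langle r\rangle)$, are fine; the paper simply takes them for granted.
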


\begin{proof}
    We proceed by induction.
    For the base case, clearly $B \subseteq R = B_0$.
    Now suppose that $B \subseteq B_k$.
    Then $rB \subseteq AB \subseteq AB_k$, which implies that $B \subseteq B_{k+1}$.
    Thus $B \subseteq \bigcap_{k\geq 0} B_k$.
\end{proof}

If there are only finitely many ideals between any given pair $B \subseteq C$ and if such a witness exists, then this lemma implies the sequence $B_k$ must converge to a fixed point after finitely many steps, and that fixed point is the maximal witness.
However, it is not clear how to bound the number of iterations required to reach stabilization.
Thus this only gives us a semi-decision procedure for checking if elements belong to the integral closure.

\section{Ideals in the Natural Numbers}
\label{sec: ideals in N}

We now turn from general theory to the special case of $R = \NN$, the initial semiring.
Here we may leverage the well-ordering and arithmetic structure of $\NN$ to construct refined invariants for studying similarity classes of ideals.

\subsection{Numerical Semigroups}
\label{sec: NS}

Every ideal in the ring $\ZZ$ is principal.
However the semiring $\NN$ has non-principal ideal which get lost in translation when passing from $\NN$ to $\ZZ$.
These ideals, known as \emph{numerical semigroups}, have been extensively studied for their many interesting properties and connections to geometry, algebra, and combinatorics.
We briefly review the fundamentals we require.

\begin{definition}
    A \textbf{numerical semigroup} is an ideal $A \subseteq \NN$ which contains a pair of coprime integers.
\end{definition}

Our definition differs slightly from the standard definition of a numerical semigroup.
We adopt this definition with an eye towards generalizations to be explored in a sequel to this paper.
The following well-known result proves equivalence between our definition and the more standard one.

\newpage

\begin{proposition}
\label{prop: mcnugget}
    If $A$ is a numerical semigroup, then $\NN \setminus A$ is finite.
\end{proposition}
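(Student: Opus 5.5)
Since $A$ is a numerical semigroup, it contains coprime integers $a, b \in A$. Because $A$ is an ideal of $\NN$, it is closed under addition and under multiplication by natural numbers. Hence $A$ contains the ideal $\langle a, b\rangle = \{xa + yb : x, y \in \NN\}$. It therefore suffices to show that $\NN \setminus \langle a, b\rangle$ is finite, since $\NN \setminus A \subseteq \NN \setminus \langle a, b\rangle$.

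First dispose of the degenerate cases. If one of $a, b$ equals $1$, then $\langle a, b\rangle = \NN$ and there is nothing to prove. Neither can be $0$ unless the other is $1$, because $\gcd(0,b) = b$. So we may assume $a, b \geq 2$ with $\gcd(a,b) = 1$.

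The key step is to show that every $n \geq (a-1)(b-1)$ lies in $\langle a, b\rangle$; any explicit bound such as $n \geq ab$ would also suffice. The argument is a residue-class one.
\begin{itemize}
    \item Since $\gcd(a,b) = 1$, the multiples $0, b, 2b, \ldots, (a-1)b$ are pairwise incongruent modulo $a$, so they represent every residue class modulo $a$.
    \item Given $n$, choose $y \in \{0, \ldots, a-1\}$ with $yb \equiv n \pmod a$.
    \item If $n \geq (a-1)b$, then $n - yb \geq 0$, and $n - yb$ is divisible by $a$, say $n - yb = xa$ with $x \in \NN$.
    \item Thus $n = xa + yb \in \langle a, b\rangle$.
\end{itemize}
Consequently $\NN \setminus \langle a, b\rangle \subseteq \{0, 1, \ldots, (a-1)b - 1\}$, which is finite. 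Since $\NN \setminus A$ is contained in this set, $\NN \setminus A$ is finite.

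No step is a real obstacle. The only point needing care is the residue-class argument, which uses Bezout-type invertibility of $b$ modulo $a$. In $\NN$ we avoid subtraction by choosing the nonnegative representative $y < a$ and requiring $n$ large enough that $n - yb \geq 0$.
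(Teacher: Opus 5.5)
Your proof is correct and follows essentially the paper's route: both arguments find an element of $A$ in every residue class modulo one of the two coprime elements and then add multiples of that element. The paper gets its representatives $rma$ ($0 \leq r < b$) from a B\'ezout relation $ma = 1 + nb$, whereas you use that $0, b, \ldots, (a-1)b$ are pairwise incongruent modulo $a$, which also gives an explicit threshold. One small inconsistency is that you announce the threshold $(a-1)(b-1)$ but only prove membership for $n \geq (a-1)b$; this is harmless, since any finite threshold suffices.
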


\begin{proof}
    Let $A$ be a numerical semigroup and suppose that $a, b \in A$ are coprime elements.
    Then $a, b$ generate the trivial ideal in $\ZZ$, which implies there are natural numbers $m, n$ such that $ma = 1 + nb$.
    If $0 \leq r < b$, then $rma = r + rnb \equiv r \bmod b$ is an element of $A$ which is congruent to $r$ modulo $b$.
    Furthermore, since $b \in A$ all natural numbers larger than $rma$ which are congruent to $r \bmod b$ will also belong to $A$.
    Thus there are finitely many natural numbers which do not belong to $A$.
\end{proof}

Numerical semigroups always have a generating set of coprime elements.

\begin{lemma}
\label{lemma: recognize NS}
    Let $A$ be a nonzero ideal in $\NN$.
    Then $A$ is a numerical semigroup if and only if $A = \langle a_1, \ldots, a_n\rangle$ for some $a_i$ such that $\gcd(a_1,\ldots, a_n) = 1$.
\end{lemma}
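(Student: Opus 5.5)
We prove the two implications separately. The forward direction uses Proposition~\ref{prop: mcnugget} together with a finite-generation argument. The reverse direction is a direct application of B\'ezout's identity in $\ZZ$, mirroring the proof of Proposition~\ref{prop: mcnugget}.

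For the forward direction, suppose $A$ is a numerical semigroup, with coprime elements $a, b \in A$. Then $\langle a, b\rangle \subseteq A$. Since an ideal of $\NN$ is just a subset closed under addition (multiplication by natural numbers is repeated addition), $\langle a,b\rangle = \{ma + nb : m,n\in\NN\}$.
\begin{itemize}
\item By Proposition~\ref{prop: mcnugget}, applied to $\langle a,b\rangle$ itself, the complement $\NN\setminus\langle a,b\rangle$ is finite.
\item Hence $A\setminus\langle a,b\rangle$ is a finite set $\{c_1,\ldots,c_r\}$.
\item Therefore $A = \langle a, b, c_1,\ldots,c_r\rangle$, and the gcd of these generators divides $\gcd(a,b)=1$.
\end{itemize}
This gives the required finite generating set with gcd $1$.

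For the reverse direction, suppose $A = \langle a_1,\ldots,a_n\rangle$ is nonzero with $\gcd(a_1,\ldots,a_n)=1$. We must exhibit two coprime elements of $A$. Drop any $a_i = 0$; the gcd is unchanged, and at least one nonzero generator remains since $A \neq 0$. Let $a = a_1 > 0$. By B\'ezout in $\ZZ$ there are integers $x_i$ with $\sum x_i a_i = 1$. Adding a large multiple of $a$ to each coefficient with $i \geq 2$ (and compensating in $x_1$), we may take $x_2,\ldots,x_n$ to be nonnegative. Set $b := \sum_{i\geq 2} x_i a_i \in A$. Then $b \equiv 1 - x_1 a \equiv 1 \pmod a$, so $\gcd(a,b) = 1$. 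Thus $a$ and $b$ are coprime elements of $A$, and $A$ is a numerical semigroup.

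The only delicate points are bookkeeping. In the forward direction, we must note that ideals of $\NN$ are exactly additively closed subsets containing $0$. In the reverse direction, we must handle the degenerate cases: zero generators, and $n = 1$, where $\gcd = 1$ forces $a_1 = 1$, and $A = \NN$ contains the coprime pair $1, 1$. Neither poses a real obstacle.
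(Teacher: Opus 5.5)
Your proof is correct and takes essentially the same approach as the paper. The forward direction uses Proposition~\ref{prop: mcnugget} to show that $A$ differs from a finitely generated numerical semigroup inside it by only finitely many elements, and the reverse direction extracts a coprime pair from a B\'ezout relation. Your variations are sound: you apply Proposition~\ref{prop: mcnugget} directly to $\langle a,b\rangle$ instead of to an initial segment $A_m$ of an enumeration, and you shift the coefficients to get $b \equiv 1 \pmod{a_1}$ instead of splitting the B\'ezout identity into positive and negative parts.
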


\begin{proof}
    First suppose that $A$ is a numerical semigroup.
    Let $a_1 < a_2 < \ldots$ be an enumeration of the nonzero elements of $A$ in order.
    Let $A_k := \langle a_1, \ldots, a_k\rangle$.
    Then $A_k \subseteq A_{k+1}$ and $A = \bigcup_{k\geq 1} A_k$.
    Since $A$ is a numerical semigroup, there must be some $m$ such that $\gcd(a_1,\ldots, a_m) = 1$.
    Hence $A_m$ is a numerical semigroup.
    Proposition~\ref{prop: mcnugget} implies that $A \setminus A_m$ is finite.
    Therefore $A = A_n$ for some $n \geq m$.

    Next suppose that $A = \langle a_1, \ldots, a_n\rangle$ for some $a_i$ such that $\gcd(a_1,\ldots, a_n) = 1$.
    Since $a_1, \ldots, a_n$ generate the trivial ideal in $\ZZ$, it follows that, after possibly reordering the $a_i$, there are natural numbers $m_i$ and some $1 \leq \ell \leq n$ such that
    \[
        m_1a_1 + \ldots + m_\ell a_\ell = 1 + m_{\ell+1}a_{\ell+1} + \ldots + m_na_n.
    \]
    Note that $m_1a_1 + \ldots + m_\ell a_\ell$ and $m_{\ell+1}a_{\ell+1} + \ldots + m_na_n$ are both elements of $A$, and the above identity implies that they are coprime.
    Hence $A$ is a numerical semigroup.
\end{proof}

Every ideal in $\NN$ canonically factors as a principal ideal times a numerical semigroup.

\begin{lemma}
\label{lemma: N ideal is principal times NS}
    If $A \subseteq \NN$ is an ideal, then $A = gB$ for some $g \in \NN$ and numerical semigroup $B$.
\end{lemma}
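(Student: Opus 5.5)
Let $g := \gcd(A)$, the greatest common divisor of all elements of $A$, and set $B := \{a/g : a \in A\}$. We will show that $A = gB$ with $B$ a numerical semigroup.

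First, the degenerate case. If $A = \langle 0 \rangle$, take $g = 0$ and $B = \NN$. This works because $\NN$ is a numerical semigroup: it contains the coprime pair $1, 2$.

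Now assume $A$ is nonzero, so $g \geq 1$. The set $B$ is an ideal of $\NN$, since it is closed under addition and under multiplication by elements of $\NN$, both inherited from $A$ after division by $g$. By construction $A = gB$. It remains to see that $B$ contains a pair of coprime integers, and we get this from Lemma~\ref{lemma: recognize NS}.

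First we find finitely many elements of $A$ with gcd equal to $g$. Enumerate the nonzero elements $a_1 < a_2 < \cdots$ of $A$. The sequence $\gcd(a_1, \ldots, a_k)$ is a weakly decreasing sequence of positive integers under divisibility, so it stabilizes. Its stable value divides every element of $A$, and it is divisible by $g$. Hence $\gcd(a_1, \ldots, a_m) = g$ for some $m$.

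Next we apply the lemma. The elements $a_i/g$ for $i \le m$ lie in $B$ and have gcd $1$. The ideal $\langle a_1/g, \ldots, a_m/g\rangle \subseteq B$ is therefore a numerical semigroup by Lemma~\ref{lemma: recognize NS}. In particular it contains two coprime elements, and these also lie in $B$. So $B$ is a numerical semigroup.

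No step here is a real obstacle. The only point needing care is that "gcd of an infinite set" must be realized by finitely many elements so that Lemma~\ref{lemma: recognize NS} applies, and the stabilization argument above handles this.
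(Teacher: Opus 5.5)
Your proof is correct and follows essentially the same route as the paper: set $g=\gcd(A)$ and $B=A/g$, realize the gcd using a finite initial segment of an enumeration, and apply Lemma~\ref{lemma: recognize NS} to get a coprime pair in $B$. The only difference is cosmetic: you enumerate $A$ and spell out why the gcd sequence stabilizes at $g$, whereas the paper enumerates $B$ and simply asserts that the partial gcds reach $1$.
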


\begin{proof}
    If $A = \{0\}$, then $g := 0$ and $B = \NN$ will do.
    Now suppose $A$ is nonzero and let $g := \gcd(A)$.
    Let $B := \{a/g : a \in A\} \subseteq \NN$ and note that $B$ is an ideal of $\NN$.
    Let $b_1 < b_2 < \ldots$ be an enumeration of $B$ and define $g_n := \gcd(b_1,\ldots, b_n)$.
    Then $g_n$ is a weakly decreasing sequence of natural numbers which converges to $1$.
    Hence there is some $n$ such that $g_n = 1$. 
    Thus $\langle b_1, \ldots, b_n\rangle$ is a numerical semigroup by Lemma~\ref{lemma: recognize NS}.
    Therefore $B$ itself is a numerical semigroup.
    The identity $A = gB$ follows by construction.
\end{proof}

Let $A$ be a numerical semigroup.
The \textbf{Frobenius number} of $A$, denoted $F(A)$, is the largest element of $\NN \setminus A$.
The \textbf{genus} of $A$ is the cardinality of $\NN \setminus A$.
Every numerical semigroup has a unique minimal set of generators.
The \textbf{embedding dimension} of $A$ is the number of minimal generators of $A$.
For a general overview of numerical semigroups and many references to the literature, we recommend the book by Rosales and Garc\'ia-S\'anchez \cite{rosales2009numerical}.

Let $\ns$ denote the set of all numerical semigroups together with the zero ideal $\langle 0 \rangle$.
Lemma~\ref{lemma: recognize NS} together with Lemma~\ref{lemma: N ideal is principal times NS} implies that $\NN$ is a Noetherian semiring.
Every nonzero element of $\NN$ is regular, hence every nonzero ideal in $\NN$ is regular and finitely generated.
Thus $\id(\NN) = \ideal(\NN)$.
Furthermore, Lemma~\ref{lemma: recognize NS} implies that $\ns$ is closed under addition and multiplication of ideals.
Thus $\ns \subseteq \ideal(\NN)$ is a subsemiring.

While numerical semigroups have been intensely studied, their natural multiplicative structure appears to have been completely overlooked.
The multiplicative structure of ideals is of central interest from a number theoretic perspective.
One natural question to ask is whether or not ideals in $\NN$ factor uniquely into irreducibles.
We show in Theorem~\ref{thm: UF fails} that unique factorization fails at several levels for numerical semigroups. 
That leaves open the problem of analyzing the multiplicative relations in $\ns$.

\subsection{Multiplicities}
\label{sec: multiplicities}

If $A \subseteq \NN$ is a nonzero ideal, then the \textbf{multiplicity} $m_0(A)$ is the smallest positive element in $A$.
We define $m_0(\{0\}) := \infty$.
Clearly we have $m_0(AB) = m_0(A)m_0(B)$ for all ideals $A, B$.
To help us analyze the multiplicative structure of ideals in $\NN$ we construct variations on the multiplicity associated to prime ideals in $\NN$. 
Prime ideals are defined identically for semirings as in rings.
First we classify the prime ideals in $\NN$.

\begin{lemma}
    If $P \subseteq \NN$ is a prime ideal, then either
    \begin{enumerate}
        \item $P = \langle p \rangle$ for $p$ a prime number,
        \item $P = \langle 0\rangle$, or
        \item $P = \langle 2, 3\rangle$.
    \end{enumerate}  
\end{lemma}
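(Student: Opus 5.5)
Throughout, a prime ideal $P$ is a proper ideal with $ab \in P \Rightarrow a \in P$ or $b \in P$; in particular $1 \notin P$. If $P = \langle 0 \rangle$ we are in case (2), so I assume $P \neq \langle 0\rangle$. By Lemma~\ref{lemma: N ideal is principal times NS}, write $P = gB$ with $g = \gcd(P) \geq 1$ and $B$ a numerical semigroup. The plan is to split according to whether $g > 1$ or $g = 1$.

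\emph{Case $g > 1$.} Pick a prime $p \mid g$; then every element of $P$ is divisible by $p$, so $P \subseteq \langle p \rangle$. For the reverse inclusion, take any nonzero $a \in P$ and write $a = p^e u$ with $p \nmid u$. Applying primality repeatedly to the factorization $a = p \cdot p \cdots p \cdot u$ puts either $p$ or $u$ in $P$. But $u \notin P$, since every element of $P$ is divisible by $p$ and $u$ is not. Hence $p \in P$, so $P = \langle p \rangle$, giving case (1). It is also worth recording that $\langle p \rangle$ really is prime, by Euclid's lemma.

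\emph{Case $g = 1$.} Now $P$ is a numerical semigroup, so by Proposition~\ref{prop: mcnugget} every sufficiently large integer lies in $P$. In particular, for each prime $q$ some power $q^N$ lies in $P$, and primality forces $q \in P$. Thus $P$ contains every prime, so it contains $2$ and $3$ and hence $\langle 2, 3\rangle \subseteq P$. Since $\langle 2,3\rangle = \NN \setminus \{1\}$ and $1 \notin P$, we get $P = \langle 2,3\rangle$, which is case (3). For completeness I would also check that $\langle 2,3\rangle$ is prime: a product $ab$ lies outside it only when $ab = 1$, i.e.\ $a = b = 1$, so if $ab \in \langle 2,3\rangle$ then at least one of $a, b$ is not $1$ and therefore lies in $\langle 2,3\rangle$.

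The main obstacle is the $g = 1$ case, specifically getting primes into $P$. My first attempt is the large-powers argument above via Proposition~\ref{prop: mcnugget}. An alternative is to note that any composite in $P$ yields one of its prime factors in $P$, but the cofinite argument is cleaner.
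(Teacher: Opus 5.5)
Your proof is correct and follows essentially the same route as the paper: you write $P = gB$ via Lemma~\ref{lemma: N ideal is principal times NS}, and in the numerical semigroup case you use cofiniteness to put a prime power $q^N$ in $P$, hence every prime $q$ in $P$. The only difference is the principal case. The paper applies primality to $g \cdot m_0(B) \in P$ and uses minimality to force $B = \NN$, leaving implicit that $g$ must then be prime; you instead pick a prime $p \mid g$ and show $p \in P$ directly, which makes the primality of the generator explicit.
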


\begin{proof}
    Clearly $\langle p \rangle$ is a prime ideal when $p$ is prime or $p = 0$.
    Let $P = \langle 2, 3\rangle = \{0\} \cup \{n \in \NN : n \geq 2\}$.
    If $a, b \in \NN$ are positive integers such that $ab \in P$, then $ab \geq 2$, which implies that either $a \geq 2$ or $b\geq 2$.
    Thus $P$ is prime.
    
    Conversely, suppose $P \subseteq \NN$ is a nonzero prime ideal.
    Lemma~\ref{lemma: N ideal is principal times NS} implies that $P = gB$ where $g := \gcd(P)$ and $B$ is a numerical semigroup.
    Let $m$ be the multiplicity of $B$.
    Then $gm \in P$, which implies that either $g \in P$ or $m \in P$.
    Since $gm$ is the smallest positive element of $gB$, the only way $g \in P$ is if $m = 1$.
    In that case $B = \NN$ and $P = \langle p \rangle$.
    If $m \in P$, then $g = 1$ and $P = B$.
    Proposition~\ref{prop: mcnugget} implies that $P$ is cofinite.
    Hence $p^k \in P$ for every prime $p$ and all sufficiently large $k$.
    Thus $p \in P$ for all primes $p$.
    In particular, $2, 3 \in P$ which implies that $\langle 2, 3\rangle \subseteq P$.
    Since $P$ is a proper ideal, we must have $P = \langle 2, 3\rangle$.
\end{proof}

\begin{remark}
    There are many examples in number theory of objects which are parametrized by prime numbers with one exception.
    The ur-example of this phenomenon is equivalence classes of absolute values on $\QQ$.
    These patterns led to the idea of a \emph{missing prime} which completes our classification.
    Motivated by a geometric analogy, this missing prime is often referred to as the \emph{prime at infinity}.
    Many instances of this phenomenon may be linked back to absolute values.
    The concept of \emph{places} was created in an attempt to put all the primes, including the prime at infinity, on an equal footing.
    
    However, the analogy is incomplete.
    Thus it is notable that $\NN$ contains exactly one prime ideal not visible in $\ZZ$.
    In the sequel we study quotients of arithmetic semirings generalizing $\NN$ to higher degree extensions.
    Although the link between ideals and quotients fractures for semirings, it is also the case that $\NN$ has one more semifield quotient than $\ZZ$.
    \qed
\end{remark}

Let $P \subseteq \NN$ be a prime ideal and let $A$ be an ideal.
We define the \textbf{prime-to-$P$ multiplicity} $m_P(A) := \min(A\setminus P)$, where we set $\min(\emptyset) := \infty$.

\begin{lemma}
\label{lemma: m_p multiplicative}
    If $A, B \subseteq \NN$ are ideals and $P \subseteq \NN$ is a prime ideal, then
    \[
        m_P(AB) = m_P(A)m_P(B).
    \]
\end{lemma}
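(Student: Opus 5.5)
We prove the two inequalities $m_P(AB) \geq m_P(A)m_P(B)$ and $m_P(AB) \leq m_P(A)m_P(B)$. First we dispose of degenerate cases. If $A \setminus P$ or $B \setminus P$ is empty, say $A \subseteq P$, then $AB \subseteq AR \subseteq P$ since $P$ is an ideal. So $AB \setminus P = \emptyset$, and both sides equal $\infty$ under the convention that $\infty$ times anything, including $\infty$, is $\infty$. The possibility $m_P(B)=0$ cannot occur alongside a finite $m_P(A)$ issue, because $0 \in P$ for every ideal. Hence all finite values of $m_P$ are positive, and no $0\cdot\infty$ ambiguity arises. From now on we assume $A \not\subseteq P$ and $B \not\subseteq P$, and set $a := m_P(A)$, $b := m_P(B)$.

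For the upper bound, note that $ab \in AB$. Since $P$ is prime and $a, b \notin P$, we get $ab \notin P$. Therefore $m_P(AB) \leq ab$.

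For the lower bound, take any $c \in AB \setminus P$ and write it as a finite sum $c = \sum_i a_i b_i$ with $a_i \in A$ and $b_i \in B$. If every term $a_i b_i$ were in $P$, then $c \in P$ because $P$ is closed under addition. So some term $a_jb_j \notin P$. Since $P$ is an ideal, this forces $a_j \notin P$ and $b_j \notin P$. Hence $a_j \geq a$ and $b_j \geq b$. Because all summands are natural numbers, $c \geq a_jb_j \geq ab$. Taking the minimum over $c$ gives $m_P(AB) \geq ab$.

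The only step needing care is this lower bound. There, elements of $AB$ are sums of products rather than single products, and the argument uses two facts: closure of $P$ under addition, which extracts a single product term outside $P$, and nonnegativity in $\NN$, which bounds $c$ below by that term. Everything else is immediate from primality of $P$. Note that this argument does not use the classification of prime ideals in the preceding lemma.
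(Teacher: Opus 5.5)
Your proof is correct and follows the paper's route. You handle the case $A \subseteq P$ or $B \subseteq P$ first, then use primality to get $m_P(A)m_P(B) \in AB \setminus P$. For the reverse inequality you write $c=\sum_i a_ib_i$ and use closure of $P$ under addition to find a summand $a_jb_j \notin P$. Two details differ only cosmetically: you bound an arbitrary $c \in AB \setminus P$ rather than the minimizer, and you note that finite values of $m_P$ are positive (since $0 \in P$), which makes the $\infty$ convention unambiguous.
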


\begin{proof}
    Let $a_P := m_P(A)$ and $b_P := m_P(B)$.
    Suppose at least one of $a_P$ or $b_P$ is $\infty$; without loss of generality suppose $a_P = \infty$.
    Then every element of $A$ belongs to $P$, hence the same is true for $AB$.
    Thus $m_P(AB) = \infty = m_P(A)m_P(B)$.

    Now suppose that $a_P, b_P < \infty$.
    Then $a_P b_P \in AB \setminus P$ by the definition of a prime ideal.
    Let $m_P(AB) = a_1b_1 + \ldots + a_nb_n$.
    If a sum belongs to the complement of an ideal, then at least one summand must be in that complement, hence $a_ib_i \in AB \setminus P$ for some $i$.
    Hence $a_i, b_i \notin P$.
    Since $a_i \geq a_P$ and $b_i \geq b_P$ we have $m_P(AB) =a_ib_i \geq a_Pb_P$.
    Thus $m_P(AB) = a_Pb_P$.
\end{proof}

If $P = \langle p\rangle$, then we write $m_p$ as a shorthand for $m_{\langle p \rangle}$.
This is consistent with our notation $m_0(A)$ for the multiplicity of $A$.
If $P = \langle 2, 3\rangle$, then 
\[
    m_P(A) = 
    \begin{cases}
        1 & \text{if }A = \NN,\\
        \infty & \text{otherwise.}
    \end{cases}
\]
Hence this invariant just captures triviality.

Suppose $A, B$ are numerical semigroups.
Then $m_p(A), m_p(B) < \infty$.
If $A \sim B$, then there is some ideal $C$ such that $AC = BC$.
Thus 
\[
    m_p(A)m_p(C) = m_p(AC) = m_p(BC) = m_p(B)m_p(C).
\]
Positive integers are multiplicatively cancellative, hence $m_p(A) = m_p(B)$.
Therefore the functions $m_p$ are invariants of similarity classes.

Let $\trop$ be the semiring with underlying set $\NN \cup \{\infty\}$ and operations $a \oplus b := \min(a,b)$ and $a \otimes b := ab$.
We call $\trop$ the \textbf{tropical min-times semiring}.
Note that $\infty$ is the additive identity and $1$ is the multiplicative identity in $\trop$.
Observe that each $m_p$ defines a semiring homomorphism from $\ideal(\NN)$ to $\trop$.

The $m_p$ invariants may be usefully extended to sequence $m_{p,k}$ of invariant functions.
Given a prime $p \in \NN$ let $v_p$ denote the $p$-adic valuation.
If $A$ is an ideal, let
\[
    m_{p,k}(A) := \min \{a \in A : v_p(a) \leq k\}.
\]
Note that $m_{p,0}(A) = m_p(A)$.
The values of $m_{p,k}(A)$ may in general be infinite, but if $A$ is a numerical semigroup, then $m_{p,k}(A) < \infty$.

The $m_{p,k}$ invariants always take values among the generators of an ideal.

\begin{lemma}
\label{lemma: m_pk among generators}
    If $A = \langle a_1, \ldots, a_n\rangle$ is a numerical semigroup, then for each $k$ we have $m_{p,k}(A) = a_i$ for some $i$.
\end{lemma}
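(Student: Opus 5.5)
The plan is to argue directly from the definition of $m_{p,k}(A)$ as a minimum, combined with the fact that $p$-adic valuation behaves well under sums. Fix $k\geq 0$ and write $m := m_{p,k}(A)$. Since $A$ is a numerical semigroup, $m<\infty$ by the remark preceding the lemma. We also have $m \neq 0$, because $v_p(0)=\infty > k$. So $m$ is a positive element of $A$ with $v_p(m)\leq k$.

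Next, since $A=\langle a_1,\ldots,a_n\rangle$, we may write
\[
    m = c_1a_1 + \cdots + c_na_n
\]
with $c_i\in\NN$. The key observation is the semiring analogue of the argument in Lemma~\ref{lemma: m_p multiplicative}: the set $I_k := \{x\in\NN : v_p(x) > k\} = \langle p^{k+1}\rangle$ is an ideal. If every summand $c_ia_i$ lay in $I_k$, then so would $m$. Hence some summand satisfies $v_p(c_ia_i)\leq k$, and in particular $c_i\geq 1$. Because $v_p(a_i)\leq v_p(c_ia_i)\leq k$, the generator $a_i$ is itself an element of $A$ with $v_p(a_i)\leq k$. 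Minimality of $m$ then gives $a_i\geq m$.

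On the other hand, $m\geq c_ia_i\geq a_i$, since all the other summands are nonnegative and $c_i\geq 1$. Combining the two inequalities yields $m=a_i$, which proves the lemma.

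There is no real obstacle in this argument. The only points needing care are excluding the zero element, which is why $v_p(0)=\infty$ matters, and making sure we pick a summand with nonzero coefficient; the ideal $\langle p^{k+1}\rangle$ handles both at once. The argument does not actually use coprimality of the generators except to guarantee finiteness. For a general ideal, the same reasoning shows that $m_{p,k}$ is either $\infty$ or a generator.
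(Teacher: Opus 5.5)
Your proof is correct and follows the same approach as the paper. You write $m_{p,k}(A)$ as an $\NN$-combination of the generators, and use that $\langle p^{k+1}\rangle$ is an ideal (the paper invokes the ultrametric inequality instead) to find a summand with nonzero coefficient and $v_p(a_i)\leq k$. Minimality of $m_{p,k}(A)$ together with $a_i \leq m_{p,k}(A)$ then gives $a_i = m_{p,k}(A)$.
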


\begin{proof}
    Let $a := m_{p,k}(A)$.
    Then $a \in A$ implies there are some $b_i \in \NN$ such that 
    \[
        a = a_1b_1 + \ldots + a_nb_n.
    \]
    The ultrametric inequality implies that
    \[
        k \geq v_p(a) \geq \min_i \{v_p(a_i) + v_p(b_i)\}.
    \]
    Hence there is some $i$ such that $b_i \neq 0$ and $v_p(a_i) \leq k$.
    Since $a_i \leq a$, the minimality of $a$ implies that $a_i = a = m_{p,k}(A)$.
\end{proof}

When $k > 0$ the invariants $m_{p,k}$ are not multiplicative, but rather satisfy a min-times convolution identity.

\begin{lemma}
\label{lemma: min-times convolution}
    Let $p$ be a prime and let $A, B \subseteq \NN$ be ideals.
    Then
    \[
        m_{p,k}(AB) = \min_{i + j = k}\{m_{p,i}(A)m_{p,j}(B)\}.
    \]
\end{lemma}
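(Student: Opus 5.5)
We prove the two inequalities separately, imitating the proof of Lemma~\ref{lemma: m_p multiplicative}. The one new ingredient is that the condition $v_p(\cdot) \leq k$ must now be split between the two factors.

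\emph{Degenerate cases.} If $A$ or $B$ is the zero ideal, then $AB = \{0\}$. Every term on both sides is $\infty$, because $v_p(0) = \infty$ exceeds every $k$. So we may assume $A, B$ are nonzero. We adopt the convention $\infty \cdot n = \infty$ for $n \geq 1$, which is harmless here because all $m_{p,i}$ are positive or $\infty$.

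\emph{Upper bound.} Fix $i + j = k$ with $m_{p,i}(A)$ and $m_{p,j}(B)$ finite. Let $a := m_{p,i}(A)$ and $b := m_{p,j}(B)$. Then $ab \in AB$, and $v_p(ab) = v_p(a) + v_p(b) \leq i + j = k$. Hence $m_{p,k}(AB) \leq ab$. Taking the minimum over all such pairs gives $m_{p,k}(AB) \leq \min_{i+j=k} m_{p,i}(A)m_{p,j}(B)$. If every such pair has an infinite term, this inequality is trivially true.

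\emph{Lower bound.} Suppose $c := m_{p,k}(AB)$ is finite; otherwise there is nothing to prove. Write $c = \sum_{t=1}^n a_t b_t$ with $a_t \in A$, $b_t \in B$, and each $a_t b_t$ nonzero (zero summands may be dropped). By the ultrametric inequality, as in Lemma~\ref{lemma: m_pk among generators},
\[
k \geq v_p(c) \geq \min_t v_p(a_t b_t).
\]
So some index $t$ has $v_p(a_t) + v_p(b_t) \leq k$. Set $i := v_p(a_t)$ and $j := k - i \geq v_p(b_t)$. Then $a_t \geq m_{p,i}(A)$ and $b_t \geq m_{p,j}(B)$. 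Since all summands are nonnegative,
\[
c \geq a_t b_t \geq m_{p,i}(A)\, m_{p,j}(B) \geq \min_{i+j=k} m_{p,i}(A)m_{p,j}(B).
\]

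The only step requiring care is this lower bound. There we must choose the split $(i,j)$ from the valuation of the particular summand found by the ultrametric inequality, rather than fixing the split in advance. The bound $c \geq a_t b_t$ then follows simply from positivity in $\NN$.
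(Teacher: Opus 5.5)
Your proof is correct and takes the same approach as the paper: the ultrametric inequality reduces the minimum defining $m_{p,k}(AB)$ to a single product $a_tb_t$, which is then compared with $m_{p,i}(A)m_{p,j}(B)$. The only difference is cosmetic: you choose the split $i = v_p(a_t)$, $j = k - i$ directly, whereas the paper first passes through $\min_{i+j\leq k}$ and then uses that $m_{p,k}$ is weakly decreasing in $k$.
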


\begin{proof}
    Suppose that $c := a_1b_1 + \ldots + a_nb_n \in AB$ has $v_p(c) \leq k$.
    Then the ultrametric inequality implies that $v_p(a_ib_i) \leq k$ for some $i$.
    Hence $\min \{c \in AB : v_p(c) \leq k\}$ is achieved by some product $ab$.
    In particular,
    \begin{align*}
        m_{p,k}(AB) &= \min \{ab : a \in A, b \in B, v_p(ab) \leq k\}\\
        &= \min_{i + j \leq k}\{m_{p,i}(A)m_{p,j}(B)\}\\
        &= \min_{i + j = k}\{m_{p,i}(A)m_{p,j}(B)\},
    \end{align*}
    where the last equality follows from the fact that $m_{p,k}(A)$ is weakly decreasing with $k$.
\end{proof}

Lemma~\ref{lemma: min-times convolution} suggests that the $m_{p,k}$ invariants may be nicely packaged into a tropical generating function.
Note that the well-ordering principle implies that arbitrary sums (finite or infinite) in $\trop$ are well-defined.
Let $\trop \lb T\rb$ be the semiring of formal power series in the variable $T$ with coefficients in $\trop$.
The additive identity in $\trop \lb T\rb$ is $\sum_{k\geq 0} \infty T^k$ and the multiplicative identity is $1 + \sum_{k\geq 1}\infty T^k$.

\begin{lemma}
\label{lemma: dec is semiring}
    Let $\dec \subseteq \trop\lb T\rb$ be the set of all series $\sum_{k\geq 0} a_k T^k$ such that $(a_k)$ is a weakly decreasing sequence.
    Then $\dec$ forms a semiring with the same operations and additive identity as $\trop\lb T\rb$, but with multiplicative identity $\frac{1}{1 - T} := \sum_{k\geq 0}T^k$.
\end{lemma}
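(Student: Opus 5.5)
The plan is to verify three things directly. First, $\dec$ is closed under the two operations of $\trop\lb T\rb$. Second, it contains the additive identity. Third, $\frac{1}{1-T}$ acts as a multiplicative identity on $\dec$. Associativity, commutativity, distributivity and absorption by zero are then inherited from $\trop\lb T\rb$, since these identities hold there and $\dec$ is a subset closed under the operations. The one point needing care is that the multiplicative identity of $\dec$ differs from that of $\trop\lb T\rb$. This is allowed, since we only need $\dec$ to be a semiring in its own right, not a subsemiring.

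For addition, let $f=\sum a_kT^k$ and $g=\sum b_kT^k$ lie in $\dec$. Then $f\oplus g=\sum \min(a_k,b_k)T^k$, and a coordinatewise minimum of two weakly decreasing sequences is weakly decreasing. The additive identity $\sum \infty T^k$ is constant, hence lies in $\dec$.

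For multiplication, the product has coefficients $c_k=\min_{i+j=k} a_ib_j$, with the convention $\infty\cdot 0=\infty$; this convention is needed to make $\infty$ absorbing-to-zero as the additive identity. To show $c_{k+1}\le c_k$, I will take a pair $(i,j)$ with $i+j=k$ that achieves $c_k$. Then $(i+1,j)$ satisfies $i+1+j=k+1$, and monotonicity gives $a_{i+1}b_j\le a_ib_j$, so $c_{k+1}\le c_k$. This requires multiplication in $\NN\cup\{\infty\}$ to be monotone. It is, given the convention that $0\cdot\infty=0$ or $\infty$ is fixed consistently. The paper implicitly uses $\infty$ absorbing in products with nonzero elements, and the zero coefficient case is harmless.

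The multiplicative identity is the main check. Let $u=\sum_k T^k$, which has all coefficients equal to $1$ and so lies in $\dec$. For $f\in\dec$ we get $(uf)_k=\min_{i+j=k} 1\cdot a_i=\min_{i\le k}a_i=a_k$, exactly because $(a_i)$ is weakly decreasing. Hence $uf=f$. This is precisely where the decreasing hypothesis is used. It also explains why the identity $1+\sum_{k\ge1}\infty T^k$ of $\trop\lb T\rb$ does not lie in $\dec$: its coefficients increase from $1$ to $\infty$, so it is excluded. Having checked all of this, $\dec$ is a semiring with the stated operations and identities.
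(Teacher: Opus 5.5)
Your proof is correct, but it is organized differently from the paper's. The paper never checks closure coordinatewise. Instead it computes $\frac{1}{1-T}\sum_k a_kT^k$ for an \emph{arbitrary} series and gets the prefix minima $b_k=\min_{j\le k}a_j$. These are always weakly decreasing, and they equal $a_k$ when $(a_k)$ already is.

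This shows at once that $\frac{1}{1-T}$ is idempotent and that $\dec=\frac{1}{1-T}\,\trop\lb T\rb$. Closure under $\oplus$ and $\otimes$ then follows formally from distributivity and idempotence, e.g.\ $\frac{1}{1-T}f\cdot\frac{1}{1-T}g=\frac{1}{1-T}fg$. No monotonicity-of-multiplication argument or discussion of $0\cdot\infty$ is needed. The payoff is the retraction $f\mapsto\frac{1}{1-T}f$ of $\trop\lb T\rb$ onto $\dec$. The paper uses it immediately in the formula $S_p(A)=\frac{1}{1-T}\sum_{a\in A}aT^{v_p(a)}$ and in the proof that $S_p$ is multiplicative.

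Your route is more hands-on. Its key identity check, $\min_{i\le k}a_i=a_k$, is the same computation as the paper's, restricted to $\dec$. Note also that your product-closure argument only uses that one factor is decreasing. So it actually proves $\dec$ is an ideal of $\trop\lb T\rb$, the same structural fact the paper obtains from the idempotent.

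Your discussion of conventions can be tightened. The absorption axiom forces $0\cdot\infty=\infty$, and with that convention $x\mapsto xz$ is order-preserving on $\NN\cup\{\infty\}$ for every $z$. So your step $a_{i+1}b_j\le a_ib_j$ is justified.
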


\begin{proof}
    Let $(a_k)$ be an arbitrary sequence in $\trop$.
    If we define $(b_k)$ by
    \[
        \sum_{k\geq 0}b_k T^k := \frac{1}{1 - T}\sum_{k\geq 0} a_kT^k,
    \]
    then expanding the product we have
    \[
        \sum_{k\geq 0}b_k T^k = \sum_{k\geq 0}\Big(\bigoplus_{j=0}^k a_j\Big)T^k
        = \sum_{k\geq 0}\min_{0\leq j \leq k}\{a_j\} T^k
    \]
    The sequence $b_k = \min_{0\leq j \leq k}\{a_j\}$ is weakly decreasing.
    Furthermore, if $a_k$ is already weakly decreasing, then $b_k = a_k$.
    In particular $\big(\frac{1}{1 - T}\big)^2 = \frac{1}{1 - T}$.
    Thus $\frac{1}{1 - T}$ is an idempotent which projects $\trop\lb T \rb$ onto $\dec$, which implies that $\dec$ is precisely all multiples of $\frac{1}{1 - T}$ in $\trop \lb T \rb$.
    Hence $\dec$ is closed under addition and multiplication; $\dec$ contains the zero element $\frac{\infty}{1 - T}$ by definition.
\end{proof}

Let $S_p : \ideal(\NN) \to \dec$ be the function defined by
\[
     S_p(A) :=  \sum_{k\geq 0} m_{p,k}(A) T^k.
\]
If $A \neq \langle 0\rangle$, then we have the alternative formula
\begin{equation}
\label{eqn: S_p alt}
    S_p(A) = \frac{1}{1 - T}\sum_{a \in A} aT^{v_p(a)}.
\end{equation}

\begin{lemma}
\label{lemma: S_p hom}
    If $p$ is a prime, then $S_p$ is a semiring homomorphism.
\end{lemma}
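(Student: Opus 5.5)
We check the four defining properties of a semiring homomorphism: $S_p$ sends $\langle 0\rangle$ to the zero of $\dec$, sends $\NN=\langle 1\rangle$ to the identity $\frac{1}{1-T}$, preserves addition, and preserves multiplication.

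\emph{Identities.} For $A=\langle 0\rangle$ every $m_{p,k}(A)=\min\emptyset=\infty$, so $S_p(\langle 0\rangle)=\sum_k \infty T^k$, the zero element of $\dec$. For $A=\NN$ we have $m_{p,k}(\NN)=1$ for all $k$, since $1\in\NN$ and $v_p(1)=0$. Hence $S_p(\NN)=\sum_k T^k=\frac{1}{1-T}$, the multiplicative identity of $\dec$ from Lemma~\ref{lemma: dec is semiring}. We also note that $S_p(A)\in\dec$ for every $A$, because $m_{p,k}(A)$ is a minimum over the increasing family of sets $\{a\in A: v_p(a)\le k\}$ and is therefore weakly decreasing in $k$.

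\emph{Addition.} We must show $m_{p,k}(A+B)=\min(m_{p,k}(A),m_{p,k}(B))$, since addition in $\trop\lb T\rb$ is coefficientwise $\min$.
\begin{itemize}
\item[($\le$)] This holds because $A,B\subseteq A+B$.
\item[($\ge$)] Let $c=a+b\in A+B$ with $v_p(c)\le k$. The ultrametric inequality gives $v_p(a)\le k$ or $v_p(b)\le k$. Since $a,b\le c$, we get $c\ge a\ge m_{p,k}(A)$ or $c\ge b\ge m_{p,k}(B)$.
\end{itemize}
The cases where $a=0$ or $b=0$ are harmless: $v_p(0)=\infty$, so the nonzero summand is the one with $v_p\le k$.

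\emph{Multiplication.} In $\trop\lb T\rb$ the $T^k$ coefficient of $S_p(A)S_p(B)$ is $\min_{i+j=k} m_{p,i}(A)m_{p,j}(B)$. This is exactly the formula for $m_{p,k}(AB)$ given by Lemma~\ref{lemma: min-times convolution}, so multiplicativity follows immediately.

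The only place needing care is the treatment of $\infty$ and $0$ in the tropical arithmetic. We need $\infty\cdot n=\infty$ in $\trop$, including $\infty\cdot 0$, but $0$ never arises as some $m_{p,k}$, since $v_p(0)=\infty$ excludes it. We also need to confirm that the convolution lemma's proof goes through when some $m_{p,i}$ are infinite, for instance when $A$ is the zero ideal or $A=p\NN$. Both checks are routine, so I expect no real obstacle; the substantive work was already done in Lemma~\ref{lemma: min-times convolution}.
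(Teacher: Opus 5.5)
Your proof is correct and follows essentially the same approach as the paper. The additive step is the same ultrametric argument. For multiplication you cite Lemma~\ref{lemma: min-times convolution} directly, whereas the paper rewrites $S_p$ as $\frac{1}{1-T}\sum_{a\in A} aT^{v_p(a)}$, uses idempotence of $\frac{1}{1-T}$, and treats the zero ideal separately; both rest on the same fact that the minimum is attained at a single product $ab$.
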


\begin{proof}
    Let $A, B \in \ideal(\NN)$.
    The ultrametric inequality $v_p(a + b) \geq \min\{v_p(a), v_p(b)\}$ and $a + b \geq a, b$ imply that the min $m_{p,k}(A + B)$ is achieved by an element of $A$ or $B$, hence that $m_{p,k}(A + B) = \min\{m_{p,k}(A), m_{p,k}(B)\}$.
    Hence
    \begin{align*}
        S_p(A + B) &= \sum_{k\geq 0} m_{p,k}(A + B)T^k\\
        &= \sum_{k\geq 0} \min\{m_{p,k}(A), m_{p,k}(B)\}T^k\\
        &= \sum_{k\geq 0} (m_{p,k}(A) \oplus m_{p,k}(B))T^k\\
        &= S_p(A) \oplus S_p(B).
    \end{align*}
    If $A$ or $B$ is the zero ideal, then so is $AB$ and thus $S_p(AB) = S_p(A)S_p(B)$.
    Now suppose that neither $A$ nor $B$ is zero, hence neither is $AB$.
    As we argued in Lemma~\ref{lemma: min-times convolution}, the minimum defining $m_{p,k}(AB)$ is achieved by a product $ab$.
    Therefore
    \begin{align*}
        S_p(AB) &= \frac{1}{1 - T}\sum_{\substack{a \in A\\b\in B}} abT^{v_p(ab)}\\
        &= \frac{1}{1 - T}\sum_{\substack{a \in A\\b\in B}} abT^{v_p(a) + v_p(b)}\\
        &= \Big(\frac{1}{1 - T}\Big)^2\sum_{a \in A}aT^{v_p(a)}\sum_{b\in B}bT^{v_p(b)}\\
        &= S_p(A)S_p(B).
    \end{align*}
    Clearly $S_p(\langle 0 \rangle) = \frac{\infty}{1 - T}$ and $S_p(\langle 1 \rangle) = \frac{1}{1 - T}$.
    Thus $S_p$ is a semiring homomorphism.
\end{proof}

\subsection{Irreducibility Criteria}

The $m_p$ invariants provide a simple means by which to test the irreducibility of ideals and, even better, similarity classes of ideals.

\begin{lemma}
\label{lemma: irred criteria}
    Let $A \subseteq \NN$ be an ideal and let $p \in \NN$ be prime or zero.
    If $m_p(A)$ is prime, then $A$ is irreducible in $\ideal(\NN)$ and $[A]$ is irreducible in $[\ideal(\NN)]$.
\end{lemma}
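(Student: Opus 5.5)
The argument rests on the multiplicativity of $m_p$ from Lemma~\ref{lemma: m_p multiplicative}; for $p=0$ this is the identity $m_0(AB)=m_0(A)m_0(B)$ noted earlier, since $m_0 = m_{\langle 0\rangle}$ is the prime-to-$\langle 0\rangle$ multiplicity. First we record which ideals are units. In $\ideal(\NN)$, $BC=\NN$ forces $1\in BC$, hence $1=bc$ with $b\in B$, $c\in C$, so $B=C=\NN$. In $[\ideal(\NN)]$, if $[B][C]=[\NN]$ then $BCD=D$ for some nonzero $D$. Applying $m_0$ gives $m_0(B)m_0(C)m_0(D)=m_0(D)$ with $m_0(D)<\infty$, so $m_0(B)=m_0(C)=1$. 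Thus $1\in B$, so $B=\NN$ and $[B]$ is the identity class.

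We also need that $m_p(B)=1$ forces $B=\NN$. For $p$ prime or zero, the value $1$ is the minimum of $B\setminus\langle p\rangle$, and this minimum equals $1$ only if $1\in B$.

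Now suppose $m_p(A)=q$ is prime and $A=BC$. Then $q=m_p(B)m_p(C)$ with both factors in $\NN\cup\{\infty\}$. Neither factor can be $\infty$, because $q$ is finite. Primality of $q$ then forces one factor to be $1$, say $m_p(B)=1$, so $B=\NN$ is a unit. We also need $A$ to be a non-unit: $m_p(\NN)=1\neq q$, so $A\neq\NN$. Hence $A$ is irreducible.

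For the similarity class, suppose $[A]=[B][C]$, meaning $AD=BCD$ for some nonzero ideal $D$. One must check that $m_p(D)<\infty$ so that cancellation is possible. The plan is to write $D=gD'$ with $D'$ a numerical semigroup, using Lemma~\ref{lemma: N ideal is principal times NS}. When $p=0$, $m_0(D)$ is finite for any nonzero $D$. When $p$ is prime, $m_p(D)$ could be $\infty$ if $p\mid g$. This is the main obstacle. To handle it, we use $m_p(AD)=m_p(A)m_p(D)$, which reduces the question to whether $AD$ contains any element prime to $p$.

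The way around this is to replace $D$ by its numerical semigroup part. Since $g$ is a nonzerodivisor, $AgD'=BCgD'$ gives $AD'=BCD'$ by dividing by $g$ elementwise. The numerical semigroup $D'$ is cofinite by Proposition~\ref{prop: mcnugget}, so it contains elements prime to $p$, and therefore $m_p(D')<\infty$. Cancelling gives $m_p(A)=m_p(B)m_p(C)$, and the argument from the ideal case shows that one of $[B]$ or $[C]$ is the identity class. Finally, $[A]$ is not a unit: if $[A]=[\NN]$ then $AD'=D'$, which gives $m_p(A)=1$, contradicting that $m_p(A)$ is prime.
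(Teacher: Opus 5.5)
Your proof is correct and follows the paper's argument: $m_p$ is multiplicative, and $m_p(B)=1$ only when $B=\NN$, so primality of $m_p(A)$ forces one factor to be trivial. Passing from the witness $D$ to its numerical-semigroup part $D'$ makes explicit that $m_p(D')<\infty$, which is needed to cancel; the paper's appeal to similarity-invariance of $m_p$ glosses over this point.
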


\begin{proof}
    If $A = BC$, then $[A] = [B][C]$, hence it suffices to prove that $m_p(A)$ being prime implies $[A]$ is irreducible.
    If $[A] = [B][C]$, then $m_p(A) = m_p(B)m_p(C)$.
    Note that $m_p(B) = 1$ if and only if $B = \langle 1 \rangle$.
    Thus if $[B]$ and $[C]$ are nontrivial classes, then $m_p(A)$ is composite.
    This is the contrapositive of our claim.
\end{proof}

\begin{remark}
    The notion of an \emph{irreducible numerical semigroup} has other meanings in the literature.
    For instance, in \cite{rosales2009numerical} they define an irreducible numerical semigroup to be one that is not the intersection of two numerical semigroups which properly contain it.
    We will not be making any further reference to that notion of irreducibility with respect to intersections, and hence for this work, we use irreducibility to mean irreducibility with respect to multiplication of ideals.
\end{remark}

The next result provides three more irreducibility tests, none of which apply to similarity classes.

\begin{proposition}
\label{prop: irred criteria}
    Let $A \subseteq \NN$ be a numerical semigroup.
    If any of the following conditions holds, then $A$ is irreducible.
    \begin{enumerate}
        \item $A$ has two minimal generators.
        \item $A$ has a prime minimal generator.
        \item $m_{p,0}(A)$ and $m_{p,\ell}(A)$ are coprime, where $p$ is prime and $\ell$ is the smallest index such that $m_{p,\ell}(A) < m_{p,0}(A)$.
    \end{enumerate}
\end{proposition}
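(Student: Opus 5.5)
The three criteria have a common starting point: a factorization $A = BC$ with $B, C \neq \langle 1 \rangle$. Since $A$ is a numerical semigroup, $\gcd(A) = 1$, and writing $B = gB'$, $C = hC'$ via Lemma~\ref{lemma: N ideal is principal times NS} gives $gh \mid \gcd(A)$. Hence $g = h = 1$, so $B$ and $C$ are themselves numerical semigroups, each different from $\NN$. In particular $1 \notin B, C$, so every nonzero element of $B$ and of $C$ is at least $2$. Also $BC$ is generated by the products $bc$ of minimal generators, so every minimal generator of $A$ has the form $bc$ with $b, c \geq 2$.

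Criterion (2) is immediate from this: every minimal generator of $A$ is composite, so a prime minimal generator rules out a nontrivial factorization.

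For (3) I would use the convolution identity of Lemma~\ref{lemma: min-times convolution}. For $B$ let $\ell_B$ be the first index with $m_{p,\ell_B}(B) < m_{p,0}(B)$, and define $\ell_C$ similarly; either may be $\infty$, but not both. For $i+j = k$ with $i < \ell_B$ and $j < \ell_C$, the product $m_{p,i}(B)m_{p,j}(C)$ equals $m_{p,0}(B)m_{p,0}(C) = m_{p,0}(A)$, so no drop occurs. The first drop for $A$ therefore happens at $\ell = \min(\ell_B, \ell_C)$. Say $\ell = \ell_B$. The minimum at index $\ell$ must use $i = \ell_B$ and $j = 0$; if several terms tie, I would check that a minimizing term of this form can still be chosen, using monotonicity. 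This gives $m_{p,\ell}(A) = m_{p,\ell_B}(B)\, m_{p,0}(C)$. Then $m_{p,0}(C)$ divides both $m_{p,0}(A) = m_{p,0}(B)m_{p,0}(C)$ and $m_{p,\ell}(A)$. Coprimality forces $m_{p,0}(C) = 1$, i.e.\ $C = \NN$, a contradiction.

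For (1), write $A = \langle a_1, a_2 \rangle$ with $a_1 < a_2$ coprime, so $a_1 = m_0(A) = b_1c_1$, where $b_1 = m_0(B)$ and $c_1 = m_0(C)$. Let $b_2 > b_1$ and $c_2 > c_1$ be the next minimal generators; these exist because $B, C \neq \NN$.
\begin{enumerate}
\item For any prime $r \mid a_1$ we have $r \nmid a_2$. By Lemma~\ref{lemma: m_pk among generators}, $m_r(A) = a_2$, and Lemma~\ref{lemma: m_p multiplicative} gives $a_2 = m_r(B)m_r(C)$.
\item Choosing $r \mid b_1$ gives $m_r(B) \geq b_2$ and $m_r(C) \geq c_1$, so $a_2 \geq b_2c_1$.
\item Choosing $r \mid c_1$ similarly gives $a_2 \geq b_1c_2$.
\end{enumerate}
Now $b_2c_1$ and $b_1c_2$ are elements of $A$ lying in $[a_1, a_2]$. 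Every element of $A$ below $a_2$ is a multiple of $a_1$, so each of them is either a multiple of $b_1c_1$ or equal to $a_2$.

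Pushing this to a contradiction is the main obstacle. Both cannot equal $a_2$, because then $b_2c_1 = b_1c_2$, which with coprimality of $a_1, a_2$ should fail. If $b_1c_1 \mid b_2c_1$, then $b_1 \mid b_2$. I would then re-run the argument with a prime $r \mid b_1$ against the smallest generator of $B$ not divisible by $r$, so that the coprimality $\gcd(a_1, a_2) = 1$ eventually collides with the divisibility constraints. If this gets messy, I would fall back on criterion (3) applied to a prime $p \mid a_1$. There $\ell$ is the first drop, and $m_{p,\ell}(A)$ is $a_1$ or $a_2$; the case $m_{p,\ell}(A) = a_1$ is impossible because $m_{p,0}(A) = a_2 > a_1$ and $\ell$ is the first index where the value falls below $a_2$, which must be $v_p(a_1)$. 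That gives $m_{p,\ell}(A) = a_1$, coprime to $m_{p,0}(A) = a_2$, so (3) applies and handles (1) directly.
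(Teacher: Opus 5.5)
Your arguments for (2) and (3) are the paper's. For (2), minimal generators of $BC$ are products $b_ic_j$ with $b_i,c_j\geq 2$. For (3), you use the convolution identity to locate the first drop and note that $m_{p,0}(B)$ or $m_{p,0}(C)$ divides both $m_{p,0}(A)$ and $m_{p,\ell}(A)$. When $\ell_B=\ell_C$, just label $B$ and $C$ so that the minimizing term is $(\ell,0)$; no monotonicity check is needed.

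For (1) your route differs. The paper argues directly: the smaller of $b_1c_2$ and $b_2c_1$ must be a minimal generator of $BC$, since otherwise $c_1\mid c_2$ or $b_1\mid b_2$. Hence it equals $a_2$, and $b_1$ or $c_1$ divides $\gcd(a_1,a_2)$.

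Your main line stalls one observation short of this. The fact that $b_1\mid b_2$ already contradicts minimality of $b_2$ as a generator of $B$. So $b_2c_1<a_2$ is impossible, and with your bound $a_2\geq b_2c_1$ you get $b_2c_1=a_2$. Then $c_1\geq 2$ divides $\gcd(a_1,a_2)$, a contradiction. No ``re-running'' is needed.

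Your fallback is a complete and valid proof. Since $A$ has two minimal generators, $a_1\geq 2$, so pick a prime $p\mid a_1$. Then $m_{p,0}(A)=a_2$, and every element of $A$ below $a_2$ is a multiple of $a_1$. So the first drop is at $\ell=v_p(a_1)$ with $m_{p,\ell}(A)=a_1$, which is coprime to $a_2$, and (3) applies. As written you say the case $m_{p,\ell}(A)=a_1$ is impossible; you mean $a_2$.

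This reduction shows (1) is a special case of (3), which unifies the criteria and avoids tracking the order of the products $b_ic_j$. The cost is relying on the $m_{p,k}$ machinery, whereas the paper's proof of (1) is self-contained generator bookkeeping. Your bounds $a_2\geq b_2c_1$ and $a_2\geq b_1c_2$, obtained from multiplicativity of $m_r$, also cleanly replace the paper's unproved remark about which products come next in size.
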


\begin{proof}
    (1)
    Let $A = \langle m, n\rangle$ with $m < n$ coprime.
    Suppose $A = BC$ where $B = \langle b_1,\ldots, b_r\rangle$ and $C = \langle c_1,\ldots, c_s\rangle$ are minimal generators where $b_1, c_1 > 1$ and the $b_i, c_i$ are strictly increasing.
    Then $m = b_1c_1$.
    The next two smallest generators of $BC$ are $b_1c_2$ and $b_2c_1$.
    Suppose without loss of generality that $b_1c_2 \leq b_2c_1$.
    If $b_1c_2$ is not a minimal generator for $BC$, then it must be a multiple of $b_1c_1$, but that is equivalent to $c_2$ being a multiple of $c_1$.
    This contradicts our assumption that $c_2$ is a minimal generator of $C$.
    Hence $n = b_1c_2$.
    However $b_1 > 1$ divides both $m$ and $n$, which contradicts our assumption that $m$ and $n$ are coprime.
    Thus $A$ is irreducible.

    (2)
    If $B = \langle b_1,\ldots, b_m\rangle$ and $C = \langle c_1,\ldots, c_n\rangle$, then $BC = \langle b_ic_j : 1\leq i \leq m, 1 \leq j \leq n\rangle$.
    The minimal generators for $BC$ are contained in this list of products.
    Thus, if any minimal generator of $A$ is prime, then either $B$ or $C$ must contain 1, which implies that $A$ is irreducible.

    (3)
    Suppose that $A = BC$.
    Then Lemma~\ref{lemma: min-times convolution} implies that for all $k \geq 0$ we have
    \[
        m_{p,k}(A) = \min_{i+j= k}\{ m_{p,i}(B)m_{p,j}(C)\}.
    \]
    If $i'$ and $j'$ are the first indices where $m_{p,i'}(B) < m_{p,0}(B)$ and $m_{p,j'}(C) < m_{p,0}(C)$ and $\ell = \min\{i',j'\}$, then for all $k < \ell$ and $i + j = k$, we must have $m_{p,i}(B) = m_{p,0}(B)$ and $m_{p,j}(C) = m_{p,0}(C)$.
    Thus $m_{p,k}(A) = m_{p,0}(B)m_{p,0}(C) = m_{p,0}(A)$.
    At index $k = \ell$ we get the first decrease in $m_{p,k}(A)$,
    \[
        m_{p,\ell}(A) = \min_{i + j = \ell}\{m_{p,i}(B)m_{p,j}(C)\}
        = \min\{m_{p,\ell}(B)m_{p,0}(C), m_{p,0}(B)m_{p,\ell}(C)\} < m_{p,0}(A).
    \]
    This identity implies that $m_{p,\ell}(A)$ is divisible by either $m_{p,0}(B)$ or $m_{p,0}(C)$, which are proper factors of $m_{p,0}(A)$.
    Thus, taking the contrapositive, if $m_{p,0}(A)$ and $m_{p,\ell}(A)$ are coprime where $\ell$ is the index of the first decrease, then $A$ must be irreducible.
\end{proof}

\subsection{Failure of Unique Factorization}
In this section we show that unique factorization fails for the multiplicative monoids $\ideal(\NN)$ and $[\ideal(\NN)]$.

Let $S = \langle m,n\rangle$ where $m < n$ are coprime.
Proposition~\ref{prop: irred criteria}(1) implies that $S$ is irreducible.

\begin{lemma}
\label{lemma: A power minimal generators}
    The minimal generators of $S^k$ are $m^{i}n^{j}$ where $i, j \geq 0$ and $i + j = k$.
    Hence $S^k$ has embedding dimension $k + 1$.
\end{lemma}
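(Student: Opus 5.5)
First, I will show that the products $m^in^j$ with $i+j=k$ generate $S^k$. I expect this to be immediate from the description of products used in the proof of Proposition~\ref{prop: irred criteria}(2): if $B$ and $C$ are generated by finite lists, then $BC$ is generated by all pairwise products. Inducting on $k$ gives
\[
S^k = \langle m^in^j : i+j=k\rangle .
\]
Every element of $S^k$ is therefore an $\NN$-combination $\sum_{i+j=k} c_{i}\, m^in^{j}$. Since the minimal generating set of a numerical semigroup is contained in any generating set, it remains to show that none of these $k+1$ products is redundant. The $k+1$ products are pairwise distinct, because $m,n$ are coprime and $m<n$, with $m>1$; if $m=1$ the ideal $S$ is trivial and the claim needs the convention $m\ge 2$, which I will assume.

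Next I will show that no $m^in^{j}$ is an $\NN$-combination of the others. Suppose
\[
m^in^j=\sum_{(i',j')\neq(i,j)} c_{i'}\, m^{i'}n^{j'}
\]
with $c_{i'}\in\NN$ and $i'+j'=k$. I will reduce modulo suitable powers, using the fact that $m$ and $n$ are coprime. Every term with $i'>i$ is divisible by $m^{i+1}$, and every term with $i'<i$ has $j'>j$, so it is divisible by $n^{j+1}$. Splitting the sum accordingly gives
\[
m^in^j = m^{i+1}X + n^{j+1}Y, \qquad X,Y\in\NN .
\]

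The main step is to rule out this equation using size and divisibility together. Reducing modulo $m^{i}$ gives $m^i\mid n^{j+1}Y$, hence $m^i\mid Y$. Similarly, $n^j\mid X$. Writing $Y=m^iY'$ and $X=n^jX'$ and dividing by $m^in^j$, the equation becomes
\[
1 = mX' + nY' .
\]
Since $m,n\ge 2$, this forces $X'=Y'=0$, and then the right-hand side is $0$, a contradiction. So each $m^in^j$ is a minimal generator, the minimal generating set of $S^k$ is exactly these $k+1$ distinct elements, and the embedding dimension of $S^k$ is $k+1$.

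The only delicate point is the divisibility bookkeeping in the split sum; the final contradiction is then immediate.
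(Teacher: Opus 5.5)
Your proof is correct. It differs from the paper's in how it handles the generators larger than $m^in^j$.

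The paper uses $m<n$ to order the generators: $m^{i}n^{j}<m^{i'}n^{j'}$ exactly when $j<j'$. It then observes that any representation of $m^in^j$ by the other generators can only involve strictly smaller ones, i.e.\ those with $i'>i$. All of these are divisible by $m^{i+1}$, so $m\mid n^j$, which contradicts coprimality in one line.

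You instead allow both families of terms and eliminate them by reducing modulo $m^i$ and modulo $n^j$, arriving at $1=mX'+nY'$. Your version never uses the ordering, since distinctness also follows from coprimality alone. It is therefore symmetric in $m$ and $n$ and purely divisibility-based, at the cost of slightly more bookkeeping. The paper's version is shorter because positivity in $\NN$ removes the larger terms for free.

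Your remark that $m\ge 2$ is needed is well taken. The paper's final step ($m\mid n^j$ contradicts coprimality) silently uses $m>1$, and for $m=1$ one has $S=\NN$, whose embedding dimension is $1$, not $k+1$.
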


\begin{proof}
    Note that since $m < n$, if $i_1 + j_1 = i_2 + j_2$, then we have $m^{i_1}n^{j_1} < m^{i_2}n^{j_2}$ if and only if $j_1 < j_2$.
    Suppose for the sake of contradiction that $m^in^j$ with $i + j = k$ is the smallest redundant generator.
    Then we must have $j > 0$ since $m^k$ is the smallest element in $S^k$.
    Hence $m^in^j \in S' := \langle m^k, m^{k-1}n,\ldots, m^{i+1}n^{j-1}\rangle$.
    However, note that each of the generators of $S'$ is divisible by $m^{i+1}$, which implies that $m^{i+1}$ divides $m^in^j$.
    But then we must have $m \mid n^j$, which contradicts $m$ and $n$ coprime.
    Therefore none of the $m^in^j$ are redundant.
\end{proof}

Given a finite set $E \subseteq \NN$ with $\max E = k$, let
\[
    S^E := \langle m^{k-j}n^{j} : j \in E\rangle.
\]
Note that $S^E$ is a numerical semigroup if and only if $0 \in E$.
For example, let $[k] := \{0,1,\ldots,k\}$, then $S^{[k]} = S^k$.
If $E_1, E_2 \subseteq \NN$, let $E_1 + E_2 = \{e_1 + e_2 : e_i \in E_i\}$ denote the sumset.
By comparing generators we have
\[
    S^{E_1}S^{E_2} = S^{E_1 + E_2}.
\]
Let $[k]^* := \{0,k\}$.
Then $S^{[k]^*} = \langle m^k, n^k\rangle$ is an irreducible numerical semigroup by Proposition \ref{prop: irred criteria}(1).
Observe that
\[
    [2] + [3]^* = \{0,1,2\} + \{0,3\} = [5].
\]
Hence we have
\begin{equation}
\label{eqn: S1}
    S^2S^{[3]^*} = S^{[2]+ [3]^*} = S^{[5]} = S^5.
\end{equation}
The left hand side of this product has three irreducible factors, and the right side has five.
Thus unique factorization fails for $\ideal(\NN)$.

Another similar example:
\[
    [1] + [2]^* + [2]^* = \{0,1\} + \{0,2\} + \{0,2\} = [5].
\]
Hence we also have
\begin{equation}
\label{eqn: S2}
    S^1 S^{[2]^*}S^{[2]^*} = S^5.
\end{equation}
Digit expansions, including with mixed base, give rise to endless variations on these examples.
This encoding of sumset arithmetic into the multiplicative structure of numerical semigroups demonstrates how flexible the failure of unique factorization is for ideals in $\NN$.

\begin{remark}
    If $A = \langle a_1,\ldots, a_m\rangle$ and $B = \langle b_1,\ldots, b_n\rangle$ are ideals, then the minimal generators of $AB$ are a subset of the products $a_ib_j$.
    Thus we may effectively compute all factorizations of a given ideal.
    We have implemented this factorization algorithm in Python together with all the other invariants and operations discussed in this paper as part of a class of ideals in $\NN$.
    This code is available at \url{https://github.com/tghyde/numerical-ideals}.    
\end{remark}

All the examples of non-unique factorization constructed so far may be resolved by passing to similarity classes.

\begin{lemma}
\label{lemma: S similarity}
    If $E \subseteq \NN$ is a finite set with $\min E = 0$ and $\max E = k$, then $S^E \sim S^k$.
\end{lemma}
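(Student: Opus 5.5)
We need $S^E \sim S^k$, that is, some nonzero ideal $C$ with $S^E C = S^k C$. Since $S^{E_1}S^{E_2} = S^{E_1+E_2}$, it suffices to find a finite set $F$ with $\min F = 0$ such that
\[
    E + F = [k] + F.
\]
Then $C := S^F$ is a nonzero ideal, in fact a numerical semigroup because $0 \in F$. It lies in $\idfg(\NN)$, and
\[
    S^E S^F = S^{E+F} = S^{[k]+F} = S^k S^F.
\]

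The natural choice is $F := [N]$ for $N$ large; I would take $N = k$. First, $[k] + [N] = [k+N]$. Second, $E + [N] \subseteq [k+N]$, because $E \subseteq [k]$. For the reverse inclusion, take $t \in [k+N]$; we need some $e \in E$ with $0 \le t - e \le N$.
\begin{itemize}
    \item If $t \le N$, use $e = 0 \in E$.
    \item If $t > N$, use $e = k \in E$. Then $t - k \ge N + 1 - k \ge 0$ when $N \ge k - 1$, and $t - k \le N$.
\end{itemize}
So the only properties of $E$ used are that it contains both endpoints $0$ and $k$ and lies inside $[k]$. Hence $E + [k] = [2k] = [k] + [k]$, which gives
\[
    S^E S^k = S^{2k} = S^k S^k,
\]
and therefore $S^E \sim S^k$.

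One point to verify is that the identity $S^{E_1}S^{E_2} = S^{E_1+E_2}$ holds for arbitrary finite $E_i$ and not just the examples. This is immediate: the generators $m^{k_1-j_1}n^{j_1} \cdot m^{k_2-j_2}n^{j_2}$ of the product are exactly $m^{(k_1+k_2)-(j_1+j_2)}n^{j_1+j_2}$, and $\max(E_1+E_2) = k_1 + k_2$. There is no real obstacle beyond this bookkeeping. Alternatively, one can argue through integral closure: Proposition~\ref{prop: power certificate} shows each $m^{k-j}n^j$ is integral over $S^E$, since $(m^{k-j}n^j)^k = (m^k)^{k-j}(n^k)^j \in (S^E)^k$. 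Combined with Lemma~\ref{lemma: int closure props}, this also gives the result, but the sumset argument is more direct.
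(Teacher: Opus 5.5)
Your proof is correct, and it takes a different route from the paper. You build an explicit cancelling ideal. The sumset identity $E + [k] = [2k] = [k] + [k]$ uses only $\{0,k\} \subseteq E \subseteq [k]$, and together with $S^{E_1}S^{E_2} = S^{E_1+E_2}$ it gives $S^E S^k = S^{2k} = S^k S^k$, with $S^k \in \idfg(\NN)$.

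The paper instead argues through integral closure. It sandwiches $S^{[k]^*} \subseteq S^E \subseteq S^k$ and uses the power certificate $(m^in^j)^k = m^{ki}n^{kj} \in (S^{[k]^*})^k$ (Proposition~\ref{prop: power certificate}) to show that every generator of $S^k$ is integral over $S^{[k]^*}$. Monotonicity (Lemma~\ref{lemma: int closure props}) then makes all three integral closures coincide, and Noetherianity, via Proposition~\ref{prop: noetherian implies similar to int closure}, turns equal closures into similarity.

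Your argument is more elementary and self-contained. It needs no integral closure theory and no Noetherian input, and it produces the concrete witness $C = S^k$, reusing the sumset encoding the paper itself uses for the factorization examples \eqref{eqn: S1} and \eqref{eqn: S2}. The paper's route gives no explicit witness. In exchange, it exercises the power-certificate mechanism, which Theorem~\ref{thm: integral closure} later shows completely characterizes integral closure in $\NN$.

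Your closing alternative is essentially the paper's proof, except that the paper compares against the smaller ideal $S^{[k]^*}$ rather than $S^E$.
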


\begin{proof}
    Since $S^{[k]^*} \subseteq S^E \subseteq S^k$, we have $\o{S^{[k]^*}} \subseteq \o{S^E} \subseteq \o{S^k}$ by Lemma~\ref{lemma: int closure props}.
    Thus it suffices to prove that each generator of $S^k$ is integral over $S^{[k]^*}$ by Proposition~\ref{prop: noetherian implies similar to int closure}.
    Let $m^in^j$ with $i + j = k$ be a generator of $S^k$.
    Observe that $(m^in^j)^k = m^{ki}n^{kj} \in (S^{[k]^*})^k$.
    Hence Proposition~\ref{prop: power certificate} implies that $m^in^j$ is integral over $S^{[k]^*}$.
\end{proof}

Thus Lemma \ref{lemma: S similarity} implies that the factorizations in \eqref{eqn: S1} and \eqref{eqn: S2} both resolve to $[S]^5$ in $[\ideal(\NN)]$.
This leaves open the possibility that similarity classes factor uniquely into irreducibles.
However, the next theorem shows that unique factorization also fails in $[\ideal(\NN)]$.
We produce an infinite family of counterexamples to unique factorization and another family of examples to show that the number of irreducible factors in a factorization is also not an invariant of a similarity class.

\begin{theorem}
\label{thm: UF fails}
    The semigroup $[\ideal(\NN)]$ of similarity classes of ideals in $\NN$ does not have unique factorization.
    More precisely,
    \begin{enumerate}
        \item If $a < b$ are primes and $c \geq 1$ such that $b < ac$ and $b \nmid c$, let $A = \langle a, b\rangle$ and $B = \langle b, ac\rangle$ be numerical semigroups.
        Suppose $d \in AB$ is any prime such that $d > a$.
        Let $C = \langle ab, a^2c, d\rangle$ and $D = \langle a^2, ab, d\rangle$.
        Then $[A], [B], [C], [D]$ are irreducible, distinct similarity classes and
        \[
            AC = BD.
        \]

        \item The number of irreducible factors is not invariant.
        Let $k\geq 2$ and define numerical semigroups $A := \langle 2, 3\rangle$ and $B := \langle 2, 3^k\rangle$.
        Let $d \in A^k B$ be any odd prime.
        Define $C := \langle 4, 2\cdot 3^k, d\rangle$ and $D:= \langle 2^{k+1},2^k\cdot 3,\ldots, 2\cdot 3^k, d\rangle$.
        Then $[A], [B], [C], [D]$ are irreducible, distinct similarity classes and
        \[
            A^k C = BD.
        \]
    \end{enumerate}
\end{theorem}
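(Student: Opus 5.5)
For each part the plan is to check the ideal identity directly from generators. Irreducibility of each similarity class will come from Lemma~\ref{lemma: irred criteria}, by exhibiting some $m_p$ (with $p$ prime or $0$) that takes a prime value. Distinctness of the classes will come from the fact that $m_0$ and $m_p$ are similarity invariants: $m_0(AC)=m_0(A)m_0(C)$ and cancellation in $\NN$ give this for $m_0$, and the discussion after Lemma~\ref{lemma: m_p multiplicative} gives it for $m_p$. Products of ideals are generated by products of generators, and for the identity it suffices to show that each generator of one side lies in the other.

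\emph{Part (1).} First we get bounds on $d$. Since $m_0(AB)=ab$, we have $d\ge ab>b>a$; in particular $d\neq a,b$, so $d$ is coprime to $a$ and to $b$.

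For irreducibility:
\begin{itemize}
\item $m_0(A)=a$ is prime.
\item $m_0(B)=b$ is prime, because $b<ac$.
\item In $C$ and $D$ every generator other than $d$ is divisible by $a$, so $m_a(C)=m_a(D)=d$, which is prime.
\end{itemize}

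For distinctness:
\begin{itemize}
\item $m_a(A)=m_a(B)=b<d=m_a(C)=m_a(D)$, so neither $[A]$ nor $[B]$ equals $[C]$ or $[D]$.
\item $m_0(A)=a\ne b=m_0(B)$, so $[A]\ne[B]$.
\item Since $d\ge ab$ and $a^2c>ab$, we get $m_0(C)=ab\neq a^2=m_0(D)$, so $[C]\ne[D]$.
\end{itemize}

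For $AC=BD$, the generators of $AC$ are $a^2b,a^3c,ad,ab^2,a^2bc,bd$, and those of $BD$ are $a^2b,b^2a,bd,a^3c,a^2bc,acd$. The only mismatches are $ad$ and $acd$. We have $acd=c\cdot ad\in AC$. Also $aA=\langle a^2,ab\rangle\subseteq D$ and $d\in AB$, so $ad\in aAB\subseteq DB$.

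\emph{Part (2).} Again we bound $d$. Since $m_0(A^kB)=2^{k+1}$ and $d$ is odd, $d>2^{k+1}$. Since $m_2(A^kB)=3^{2k}$ by Lemma~\ref{lemma: m_p multiplicative}, also $d\ge 3^{2k}>3^k$.

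For irreducibility, $m_0(A)=m_0(B)=2$ is prime, and $m_2(C)=m_2(D)=d$ is prime because all other generators of $C$ and $D$ are even.

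For distinctness:
\begin{itemize}
\item $m_2$ takes the values $3$, $3^k$, $d$, $d$ on $A,B,C,D$, which separates $[A]$, $[B]$ and $\{[C],[D]\}$.
\item $m_0(C)=4\ne 2^{k+1}=m_0(D)$ because $k\ge2$.
\end{itemize}

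For $A^kC=BD$, note that the non-$d$ generators of $D$ are exactly $2\cdot\{2^{k-j}3^j\}_{j=0}^k$, that is, $2$ times the generators of $A^k$.
\begin{itemize}
\item The generators $4\cdot 2^{k-j}3^j$ and $2\cdot3^k\cdot 2^{k-j}3^j$ of $A^kC$ coincide with $2\cdot(2A^k)$ and $3^k\cdot(2A^k)$ on the $BD$ side.
\item For the generators $d\,2^{k-j}3^j$ of $A^kC$: if $j<k$, write it as $2\cdot(d\,2^{k-j-1}3^j)$ with $2\in B$ and the second factor in $D$; if $j=k$, write it as $3^k\cdot d$ with $3^k\in B$ and $d\in D$.
\item The remaining generators $2d$ and $3^kd$ of $BD$ lie in $A^kC$: $3^k\in A^k$ and $d\in C$ give $3^kd\in A^kC$, and $d\in A^kB$ gives $2d\in A^k(2B)\subseteq A^kC$, since $2B=\langle 4,2\cdot3^k\rangle\subseteq C$.
\end{itemize}

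The main obstacle is this bookkeeping of the cross terms involving $d$. The hypothesis $d\in A^kB$ (respectively $d\in AB$) is used exactly once, to place $2d$ (respectively $ad$) on the opposite side, via the inclusion $2B\subseteq C$ (respectively $aA\subseteq D$).
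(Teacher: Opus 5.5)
Your proof is correct and takes essentially the same route as the paper. You get irreducibility from Lemma~\ref{lemma: irred criteria}, using $m_0$ for $A,B$ and $m_a$ (resp.\ $m_2$) for $C,D$. You get distinctness from the similarity invariants $m_0$ and $m_p$. For the identity, both sides share the terms $aAB$ (resp.\ $2A^kB$) and differ only in cross terms with $d$, which are absorbed using $d\in AB$ (resp.\ $d\in A^kB$).

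The paper packages your generator bookkeeping as $C=aB+\langle d\rangle$, $D=aA+\langle d\rangle$ (resp.\ $C=2B+\langle d\rangle$, $D=2A^k+\langle d\rangle$). In part (1) it separates the four classes by $m_0$ alone, with multiplicities $a,b,ab,a^2$. These differences are cosmetic.
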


\begin{proof}
    (1)
    The numerical semigroups $A$ and $B$ have prime multiplicities, hence $[A]$ and $[B]$ are irreducible.
    Note that $a$ is a prime distinct from $d$.
    Hence $m_a(C) = m_a(D) = d$ and Lemma~\ref{lemma: irred criteria} implies that $[C]$ and $[D]$ are irreducible.
    The ideals $A, B, C, D$ have distinct multiplicities, hence generate pairwise distinct similarity classes.
    Observe that 
    \[
        C = \langle ab, a^2c, d\rangle =  aB + \langle d\rangle\hspace{.5in} D = \langle a^2, ab, d\rangle = aA + \langle d\rangle.
    \]
    Thus
    \[
        AC = aAB + dA\hspace{.5in}BD = aAB + dB.
    \]
    We have $dA = \langle ad, bd\rangle$ and $dB = \langle bd, acd\rangle$.
    Since $d \in AB$ by assumption, $ad, acd \in aAB$.
    Therefore
    \[
        AC = aAB + dA = aAB + \langle bd\rangle = aAB + dB = BD. 
    \]

    (2)
    The argument that $[A], [B], [C], [D]$ are irreducible is the same as in part (1).
    The similarity classes are distinguished by considering $m_0$ and $m_2$ for each ideal.
    The proof of equality is also essentially the same:
    Observe that $C = 2B + \langle d \rangle$ and $D = 2A^k + \langle d \rangle$.
    Thus
    \[
        A^k C = 2A^k B + dA^k \hspace{.5in} BD = 2A^kB + dB.
    \]
    Since $d \in A^k B$ by assumption, we have
    \[
        A^k C = 2A^k B + \langle 3^k d\rangle = BD.
    \]
    Thus one similarity class can have an irreducible factorization with 2 terms or $k + 1$ terms.
\end{proof}

Ideals were introduced to arithmetic as a means of correcting the failure of unique factorization in extensions of the integers.
It is ironic that ideals bring non-unique factorization to the natural numbers, the genesis of unique factorization.
Could there be some extension or refinement of similarity classes that resolves the distinct factorizations highlighted in Theorem~\ref{thm: UF fails}?
It seems unlikely to us, based on the nature of these examples, but we would be happy to be proven wrong.

\section{Invariant Asymptotics}
\label{sec: asymptotics}

In this section we study the asymptotic growth rates of the invariants Frobenius number, genus, and type along sequences of numerical semigroups of the form $(A^k)$ where $A$ is fixed.
The asymptotic formula for Frobenius numbers (Theorem~\ref{thm: frob asymptotic}) is a critical component in the proof of Theorem~\ref{thm: integral closure}.

Numerical semigroups with two generators $S := \langle m, n\rangle$ play a special role in the theory.
These are the simplest numerical semigroups and many of their properties and invariants can be worked out explicitly (whereas such formulas for numerical semigroups with more than two minimal generators are generally unavailable).
Powers of $S$ have simple generating sets,
\[
    S^k = \langle m^k, m^{k-1}n,\ldots, mn^{k-1}, n^k\rangle.
\]
Ong and Ponomarenko \cite{ong2008frobenius} initiated the study of these numerical semigroups by deriving an explicit formula for their Frobenius number.
Tripathi \cite{tripathi2008frobenius} provided an alternative derivation of the Frobenius number as well as a formula for the genus of $S^k$.
See Theorem~\ref{thm: two-gen powers} below.

The \textbf{Ap\'ery set} of a numerical semigroup $A$ with respect to an integer $m$ is
\[
    \ap_m(A) := \{a \in A : a - m \notin A\}.
\]
Equivalently, $\ap_m(A)$ consists of the smallest elements in each congruence class modulo $m$ (which exist by Proposition~\ref{prop: mcnugget}).
Typically we consider the Ap\'ery set of $A$ with respect to the multiplicity $m = m_0(A)$, and we denote that case simply by $\ap(A)$ and call it the Ap\'ery set of $A$.
The Ap\'ery set is a fundamental invariant of a numerical semigroup, essential for computing other invariants and testing membership in $A$.

Kiers, O'Neil, and Ponomarenko \cite{kiers2016numerical} later generalized the results of Ong, Ponomarenko, and Tripathi to a larger family of numerical semigroups generated by a \emph{compound sequence}.
They calculate Ap\'ery sets for all such numerical semigroups.
We recall these results below for later use.

\begin{theorem}
\label{thm: two-gen powers}
    If $S = \langle m, n\rangle$ with $m < n$ coprime, then for $k \geq 1$ we have
    \begin{enumerate}
        \item \cite{ong2008frobenius} $F(S^k) = \frac{(m-1)n^{k+1} - (n-1)m^{k+1}}{n -m} = (m-1)(m^{k-1}n + \ldots + m n^{k-1} + n^k) - m^k$,

        \item \cite{tripathi2008frobenius} $g(S^k) = \frac{F(S^k) + 1}{2}$,

        \item \cite[Thm. 15]{kiers2016numerical} $\ap(S^k) = \{j_1m^{k-1}n+j_2m^{k-2}n^2+\ldots+j_kn^k:0\leq j_i < m\}$.
    \end{enumerate}
\end{theorem}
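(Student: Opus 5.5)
Part (3) is the central claim. I will prove it first and then read off (1) and (2) from it, using the fact that the multiplicity of $S^k$ is $m_0(S^k)=m^k$ (multiplicativity of $m_0$).

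Write $g_i := m^{k-i}n^i$ for $0\le i\le k$. These are the generators of $S^k$ (Lemma~\ref{lemma: A power minimal generators}), and $g_0=m^k$. The key relation is
\[
m\,g_i = m^{k-i+1}n^i = n\,g_{i-1} \qquad (1\le i\le k).
\]
Take any element $\sum_{i=0}^k c_i g_i$ of $S^k$. Whenever $c_i\ge m$ for some $i\ge 1$, replace $m$ copies of $g_i$ by $n$ copies of $g_{i-1}$. Processing $i=k,k-1,\ldots,1$ in order, this terminates. The result is that every element of $S^k$ can be written as
\[
c\,m^k + w, \qquad w=\sum_{i=1}^k j_i g_i,\quad 0\le j_i<m,\quad c\in\NN.
\]
Let $W$ be the set of such $w$; it has at most $m^k$ elements.

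Next I show that the elements of $W$ with distinct digit vectors are pairwise incongruent modulo $m^k$. Factor $w = n\bigl(j_1 m^{k-1} + j_2 m^{k-2}n + \cdots + j_k n^{k-1}\bigr)$. Since $n$ is a unit modulo $m^k$, it suffices to recover the digits from the bracketed sum modulo $m^k$.
\begin{enumerate}
\item Reducing modulo $m$ leaves $j_k n^{k-1}$. Because $n$ is invertible modulo $m$ and $0\le j_k<m$, this determines $j_k$.
\item Subtract $j_kn^{k-1}$ and divide by $m$; the result is now known modulo $m^{k-1}$.
\item Repeat step 1 on this quotient to recover $j_{k-1}$, and continue down to $j_1$.
\end{enumerate}
So $|W|=m^k$, and $W$ meets every residue class modulo $m^k$ exactly once.

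Now fix $w\in W$ and suppose $s\in S^k$ with $s\equiv w \pmod{m^k}$. By the normal form, $s=c\,m^k+w'$ with $w'\in W$, and $w'\equiv w$ forces $w'=w$. Hence $s\ge w$, so $w$ is the least element of $S^k$ in its residue class. Therefore $\ap(S^k)=W$, which is (3).

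For (1), I use the standard fact that $F(A)=\max\ap(A)-m_0(A)$. This holds because each class $r$ modulo $m_0$ has its largest gap equal to $w_r-m_0$, where $w_r$ is the Apéry element of that class. The maximum of $W$ is attained with all $j_i=m-1$, giving
\[
F(S^k)=(m-1)(g_1+\cdots+g_k)-m^k .
\]
Summing the geometric series $\sum_{i=0}^k m^{k-i}n^i = (n^{k+1}-m^{k+1})/(n-m)$ gives the first closed form.

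For (2), I count gaps class by class. In class $r$ the gaps are $w_r-m_0,\ w_r-2m_0,\ \ldots$, and there are $(w_r-r)/m_0$ of them when $0\le r<m_0$. This yields Selmer's formula
\[
g(A)=\frac{1}{m_0}\sum_{w\in\ap(A)} w-\frac{m_0-1}{2}.
\]
In $\sum_{w\in W}w$, each digit $j_i$ runs over $\{0,\ldots,m-1\}$ while the other $k-1$ digits range over $m^{k-1}$ choices. So
\[
\sum_{w\in W} w = m^{k-1}\cdot\frac{m(m-1)}{2}\sum_{i=1}^k g_i .
\]
Dividing by $m^k$ gives $g(S^k)=\frac{m-1}{2}\sum_{i\ge1}g_i-\frac{m^k-1}{2}$, which equals $\frac{F(S^k)+1}{2}$ by (1).

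The main obstacle is the digit-recovery (incongruence) step in proving (3). The normal-form reduction alone only gives $\ap(S^k)\subseteq W$; the injectivity of $W\to\ZZ/m^k$ is what makes the two sets equal, and it uses $\gcd(m,n)=1$ essentially.
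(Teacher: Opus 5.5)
Your proof is correct. It is not the paper's route, because the paper gives no argument for this theorem. It imports (1) from \cite{ong2008frobenius}, (2) from \cite{tripathi2008frobenius}, and (3) from \cite[Thm.~15]{kiers2016numerical}; the last of these computes Ap\'ery sets for the broader class of semigroups generated by compound sequences.

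You instead prove (3) directly in the geometric case. You treat $m g_i = n g_{i-1}$ as a carry rule, which puts every element into the normal form $c\,m^k + \sum_{i\ge 1} j_i g_i$ with $0\le j_i<m$. You then use that $n$ is a unit modulo $m^k$ to recover the digits $j_k, j_{k-1},\ldots,j_1$ one at a time. Parts (1) and (2) follow formally from $F(A)=\max\ap(A)-m_0(A)$ and Selmer's formula. The paper uses both identities anyway in the proof of Theorem~\ref{thm: frob asymptotic}.

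Your route gives a short, self-contained derivation of all three parts, in which coprimality enters at one visible point. The citations give a generality the paper does not need.

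Two small comments:
\begin{itemize}
\item Your sentence that each class $r$ has largest gap $w_r-m_0$ does not literally apply to $r=0$, since that class has no gaps. This is harmless, because for $m\ge 2$ the maximum of $W$ lies in a nonzero class.
\item Your box description of $W$ gives the symmetry behind (2) more directly. The map $j_i\mapsto m-1-j_i$ sends $w$ to $\max W - w\in S^k$. Hence $\ap(S^k)$ has a unique $\leq_{S^k}$-maximal element, and $S^k$ has type $1$.
\end{itemize}
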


\subsection{Symmetry and Type}
\label{sec: symmetry}
If $A$ is a numerical semigroup, let $\chi_A$ denote its characteristic function.
Proposition~\ref{prop: mcnugget} implies that the sequence $(\chi_A(n))_{n\in \NN}$ is eventually all 1's.
A binary sequence which is eventually constant equal to 1 encodes a partition:
Starting at $(0,g(A))$ on the $y$-axis, interpret each 0 as a step down, and each 1 as a step to the right.
The resulting path, together with the positive $x$- and $y$-axes, bounds a Young diagram of a partition.

\begin{example}
\label{ex: symm}
    Let $A = \langle 3, 7\rangle$.
    The characteristic sequence of $A$ is
    \[
        100100110110111\ldots
    \]
    and the corresponding Young diagram is
    \begin{center}
        \includegraphics[scale=.35]{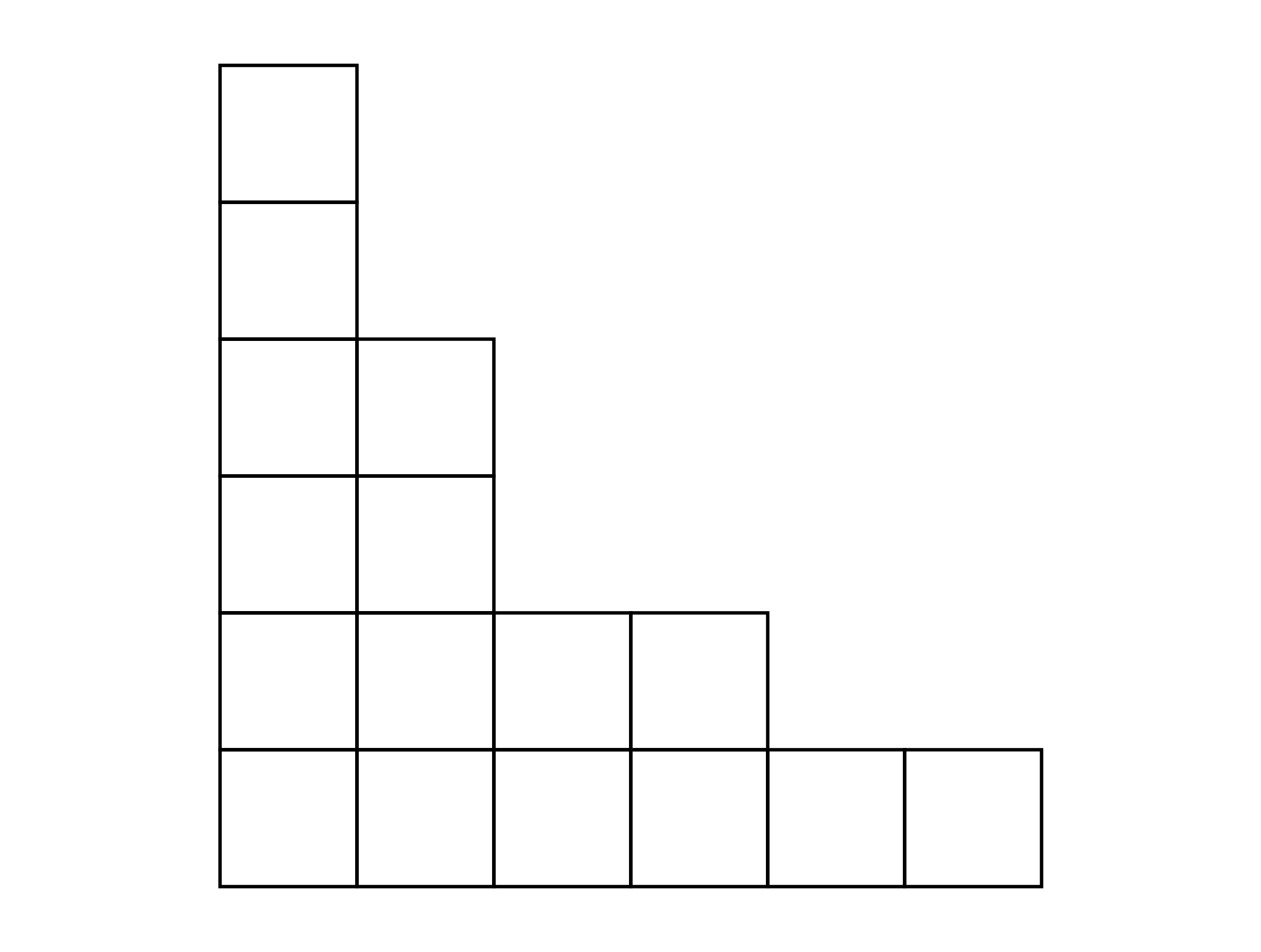}
    \end{center}
\end{example}

Note that the partition in Example~\ref{ex: symm} is symmetric with respect to reflection across the diagonal.
A \textbf{symmetric} numerical semigroup is one for which its corresponding partition is symmetric.
Thus Example~\ref{ex: symm} shows that $\langle 3, 7\rangle$ is symmetric.
Furthermore, it is known that every two-generator numerical semigroup is symmetric (see, e.g. \cite[Cor. 4.7]{rosales2009numerical}).
Symmetric numerical semigroups may be characterized by the relationship between their genus and Frobenius numbers: $A$ is symmetric if and only if $g(A) = \frac{F(A) + 1}{2}$ (see \cite[Cor. 4.5]{rosales2009numerical}).

\begin{corollary}
\label{cor: powers are symmetric}
    If $S = \langle m, n\rangle$ where $m < n$ are coprime, then $S^k$ is symmetric for all $k\geq 0$.
\end{corollary}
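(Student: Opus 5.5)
The corollary should follow directly from Theorem~\ref{thm: two-gen powers}(2) together with the characterization of symmetry recalled just before the statement: a numerical semigroup $A$ is symmetric if and only if $g(A) = \frac{F(A)+1}{2}$ (\cite[Cor. 4.5]{rosales2009numerical}). The plan is to check that $S^k$ is a numerical semigroup, quote Tripathi's genus formula for it, and then apply the characterization.

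\textbf{Step 1: $S^k$ is a numerical semigroup.} For $k \geq 1$, Lemma~\ref{lemma: A power minimal generators} gives the generators $m^{k-j}n^j$. Among them are $m^k$ and $n^k$, which are coprime because $m$ and $n$ are. Hence $S^k$ is a numerical semigroup by our definition (equivalently, by Lemma~\ref{lemma: recognize NS}), so $F(S^k)$ and $g(S^k)$ are defined.

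\textbf{Step 2: Tripathi's identity.} By Theorem~\ref{thm: two-gen powers}(2) we have $g(S^k) = \frac{F(S^k)+1}{2}$ for every $k \geq 1$. The characterization above then shows that $S^k$ is symmetric.

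\textbf{Step 3: the edge case $k = 0$.} Here $S^0 = \NN$. Its characteristic sequence is all $1$'s, which gives the empty partition, and that is trivially symmetric. Alternatively, with the convention $F(\NN) = -1$ and $g(\NN) = 0$, the identity $g = \frac{F+1}{2}$ still holds.

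I expect no real obstacle, since all the substantive content sits in the cited formulas. The only care needed is in Step 3, where it must be stated explicitly that the trivial semigroup counts as symmetric under the partition definition.
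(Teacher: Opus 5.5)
Your proposal is correct and matches the paper's proof: the paper derives the corollary directly from Tripathi's identity $g(S^k) = \frac{F(S^k)+1}{2}$ (Theorem~\ref{thm: two-gen powers}(2)), combined with the characterization of symmetry as $g = \frac{F+1}{2}$. Your check that $S^k$ is a numerical semigroup and your separate treatment of $k = 0$ (where $S^0 = \NN$) fill in details the paper leaves implicit, since the cited theorem is stated only for $k \geq 1$.
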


\begin{proof}
    This follows directly from the result of Tripathi, Theorem~\ref{thm: two-gen powers}(2).
\end{proof}

Moving beyond two-generator numerical semigroups, the situation is more complicated.
There are numerical semigroups $A$ that are symmetric, but which have non-symmetric powers.
For example, $A = \langle 4, 6, 7\rangle$ is symmetric, but $A^2$ is not symmetric.
There are also examples where $A$ is not symmetric but $A^2$ is symmetric, though these are harder to find.
For example, $A = \langle 12, 18, 20, 39, 67\rangle$ is not symmetric, but $A^2$ is symmetric.

We can refine this analysis by consider how \emph{type} behaves under taking powers.
Let $A \subseteq \NN$ be a numerical semigroup.
Given $a, b \in \NN$ we say $a \leq_A b$ if $b - a \in A$.
The relation $\leq_A$ defines a partial order on $\NN$.
The \textbf{pseudo-Frobenius numbers} of $A$ are the maximal elements of $\NN\setminus A$ with respect to $\leq_A$.
Let $\pf(A)$ denote the set of all pseudo-Frobenius numbers.
The \textbf{type} of $A$ is $\t(A) := |\pf(A)|$.
If $m \in \NN$, let $\maxap_m(A)$ denote the set of maximal elements of $\ap_m(A)$ with respect to $\leq_A$.
A numerical semigroup $A$ is symmetric if and only if $\t(A) = 1$ (see \cite[Prop. 2.19]{rosales2009numerical}).

\begin{question}
\label{question: type growth}
    Let $A \subseteq \NN$ be a numerical semigroup.
    Does $\lim_{k\to\infty} \log \t(A^k)/k$ exist?
    If so, how do we express this limit in terms of other invariants of $A$?
\end{question}

Note that $\t(A) \leq m_0(A) - 1$ for all numerical semigroups $A$ (see \cite[Cor. 2.23]{rosales2009numerical}).
Hence $\t(A^k) \leq m_0(A)^k - 1$, which implies that
\[
    \limsup_{k\to\infty} \frac{\log \t(A^k)}{k} \leq m_0(A).
\]
However Corollary \ref{cor: powers are symmetric} implies this bound is not sharp.
The next proposition allows us to show that $\t(A^k)$ grows monotonically in many examples; it generalizes a result due to Nari \cite[Prop. 6.6]{Nari2013}.

\begin{proposition}
\label{prop: gluing type}
    Let $A, B \subseteq \NN$ be numerical semigroups and let $m, n \in \NN$ be coprime natural numbers such that $n \in A$.
    Then
    \[
        \pf(mA + nB) = m\pf(A) + n\maxap_m(B),
    \]
    and
    \[
        \t(mA + nB) = \t(A)|\maxap_m(B)|.
    \]
\end{proposition}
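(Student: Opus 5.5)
The plan is to reduce everything to a normal form for elements of $C := mA + nB$, and then to read off the pseudo-Frobenius numbers in that normal form.

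\emph{Step 1 (normal form).} The key relation is $nm \in mA$, which holds because $n \in A$. So whenever $b \in B$ and $b - m \in B$, we can shift a copy of $m$ from the $B$-side to the $A$-side:
\[
    ma + nb = m(a+n) + n(b-m).
\]
Iterating this descent shows that every element of $C$ can be written as $ma + nw$ with $a \in A$ and $w \in \ap_m(B)$.

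I would then prove a converse statement for arbitrary integers. Since $\gcd(m,n) = 1$, every $x \in \ZZ$ can be written as $x = mu + nv$, and the residue of $v$ modulo $m$ is determined by $x$. I would try to show that $x \in C$ if and only if, for a suitable $w \in \ap_m(B)$ in the right class modulo $m$, the integer $u$ defined by $x = mu + nw$ lies in $A$. The hard direction is that membership cannot be gained by choosing a different $w'$ in the same class. Comparing $mu + nw = mu' + nw'$ forces $w' = w + tm$ and $u = u' + tn$. I would try to use $n \in A$ and the minimality built into the Ap\'ery set to move between the two representations. I expect this uniqueness and comparison step to be the main obstacle. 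The Ap\'ery set is only a clean set of class representatives when $m \in B$, so I may need to argue more carefully by tracking the maximal $t$ with $w - tm \in B$.

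\emph{Step 2 (pseudo-Frobenius numbers).} An integer $x \notin C$ is pseudo-Frobenius exactly when both of the following hold:
\begin{enumerate}
    \item $x + ma' \in C$ for every nonzero $a' \in A$;
    \item $x + nb' \in C$ for every nonzero $b' \in B$.
\end{enumerate}
Write $x = mf + nw$ as in Step 1. Using the normal form, condition (1) should translate to $f \notin A$ and $f + a' \in A$ for all nonzero $a' \in A$, that is, $f \in \pf(A)$. Condition (2) should translate to the following: for every nonzero $b' \in B$, the element $w + b'$ either lies outside $\ap_m(B)$ (so that a descent step supplies an extra copy of $n$ on the $A$-side, which absorbs $f$) or is otherwise harmless. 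This is exactly the statement that $w$ is $\leq_B$-maximal in $\ap_m(B)$, that is, $w \in \maxap_m(B)$. The converse direction, that every $mf + nw$ with $f \in \pf(A)$ and $w \in \maxap_m(B)$ is pseudo-Frobenius, is checked the same way in reverse. This gives
\[
    \pf(mA + nB) = m\pf(A) + n\maxap_m(B).
\]

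\emph{Step 3 (type).} To count, I would show that the map $(f,w) \mapsto mf + nw$ is injective on $\pf(A) \times \maxap_m(B)$. Suppose $mf + nw = mf' + nw'$. Then $w \equiv w' \pmod m$. If $w \neq w'$, say $w' = w + tm$ with $t > 0$, then $w \leq_B w'$ would contradict the maximality of $w$ or the Ap\'ery condition. So $w = w'$, and hence $f = f'$. Injectivity then yields
\[
    \t(mA + nB) = \t(A)\,|\maxap_m(B)|.
\]
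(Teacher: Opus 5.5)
Your outline follows the paper's proof: the decomposition $C=\bigsqcup_{w\in\ap_m(B)}(mA+nw)$, the reduction of the pseudo-Frobenius condition to translates by $ma'$ and $nb'$, and counting via $w\equiv w'\pmod m$. But there is a genuine gap. The step you defer as ``the main obstacle'' is the one everything else rests on, and you never prove it.

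Uniqueness of the normal form is used in three places:
\begin{enumerate}
\item It shows $mf+nw\notin C$ when $f\notin A$.
\item It makes the case $w+b'\in\ap_m(B)$ in your condition (2) fatal rather than ``otherwise harmless.'' Without that, you cannot conclude that (2) is \emph{equivalent} to maximality of $w$.
\item Your injectivity argument needs it.
\end{enumerate}

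The repair you suggest cannot work. Descending one copy of $m$ at a time while staying inside $B$ lands you in $\{b\in B : b-m\notin B\}$. That set agrees with the least-representative set only when $m\in B$, and for it the proposition is false. Take $A=\langle 2,3\rangle$, $B=\langle 2,5\rangle$, $m=3$, $n=2$. Then $C=\langle 4,6,9\rangle$ and $\pf(C)=\{11\}$. Meanwhile $\{b\in B: b-3\notin B\}=\{0,2,4,6\}$ has unique $\leq_B$-maximal element $6$, and since $\pf(A)=\{1\}$ the formula would predict $\pf(C)=\{15\}$. But $15=6+9\in C$. With least representatives one gets $\{0,2,4\}$, maximal element $4$, and $3\cdot 1+2\cdot 4=11$, as it should be. So your worry was well founded, but no amount of tracking $t$ rescues the descent version.

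The missing idea is to let $\ap_m(B)$ be the set of \emph{least} elements of $B$ in each residue class mod $m$. This is the reading the paper's proof actually uses: it speaks of ``the unique element of $\ap_m(B)$'' in a class, and its example $\ap_4(\langle 7,9\rangle)=\{0,7,9,14\}$ is computed this way.

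With this reading, existence does not need the intermediate values $b-jm$ to lie in $B$. If $w$ is the least element of $B$ congruent to $b\in B$, then $b=w+tm$ with $t\geq 0$. Hence $ma+nb=m(a+tn)+nw$, and $a+tn\in A$ because $n\in A$.

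Uniqueness is one line. If $ma+nw=ma'+nw'$, coprimality forces $w\equiv w'\pmod m$, so $w=w'$ and $a=a'$. Consequently, for every integer $x$, let $w\in\ap_m(B)$ be the unique element with $nw\equiv x\pmod m$. Then $x\in C$ if and only if $(x-nw)/m\in A$.

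This makes your Step 2 precise. Write $w+b'=w''+me$ with $w''\in\ap_m(B)$ and $e\geq 0$. The translate $x+nb'=m(f+ne)+nw''$ lies in $C$ if and only if $f+ne\in A$. This holds for $e\geq 1$ because $f\in\pf(A)$, and fails for $e=0$ because $f\notin A$. So condition (2) says exactly that $w\in\maxap_m(B)$.

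In Step 3, your inference $w\leq_B w+tm$ silently assumes $tm\in B$, which need not hold. With least representatives the congruence already gives $w=w'$. Alternatively, $f=f'+tn$ with $t>0$ and $f'\in\pf(A)$ would put $f$ in $A$, a contradiction.
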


\begin{proof}
    Let $C := mA + nB$.
    Given $b \in \ap_m(B)$, let $C_b := mA + nb$.
    Since $m$ and $n$ are coprime we have
    \begin{equation}
    \label{eqn: C decomp}
        C = \bigsqcup_{b \in \ap_m(B)} C_b.
    \end{equation}
    Suppose $a \in \pf(A)$ and $b \in \maxap_m(B)$ and define
    \[
        c := ma + nb.
    \]
    Note that $a \notin A$ implies that $c \notin C$.
    Let $d \in C$ be a nonzero element.
    The decomposition \eqref{eqn: C decomp} implies that $d = ma' + nb'$ for some $a' \in A$ and some $b' \in \ap_m(B)$.
    The definition of $\ap_m(B)$ tells us that
    \[
        b + b' = b'' + me
    \]
    for some $b'' \in \ap_m(B)$ and $e \in \NN$.
    Thus
    \begin{align*}
        c + d &= (ma + nb) + (ma' + nb')\\
        &= m(a + a') + n(b + b')\\
        &= m(a + a') + n(b'' + me)\\
        &= m(a + a' + ne) + nb''.
    \end{align*}
    Since $a \in \pf(A)$ and $a', n \in A$ we have $a + a' + ne \in A$ if and only if $a' + ne \neq 0$.
    If $a' + ne = 0$, then $a' = e = 0$.
    Hence $b + b' = b''$, which gives us $b \leq_B b''$.
    Our assumption that $b \in \maxap_m(B)$ then implies that $b = b''$, which is to say that $b' = 0$.
    But then $d = ma' + nb' = 0$, contradicting our assumption that $d$ was nonzero.
    Hence $c \in \pf(C)$.
    Therefore $m\pf(A) + n\maxap_m(B) \subseteq \pf(C)$.

    Let $c \in \pf(C)$ and define $b$ to be the unique element of $\ap_m(B)$ such that $c \equiv nb \bmod m$.
    Hence $c = ma + nb$ for some integer $a$.
    Since $c \notin C$ we must have $a \notin A$ by \eqref{eqn: C decomp}.
    For every positive $a' \in A$ we have $c + ma' \in C$ by definition of pseudo-Frobenius elements.
    Hence
    \[
        c + ma' = (ma + nb) + ma' = m(a + a') + nb \in C
    \]
    and \eqref{eqn: C decomp} implies that $a + a' \in A$.
    Therefore $a \in \NN$ and in fact $a \in \pf(A)$.
    Suppose $b \notin \maxap_m(B)$.
    Then we can write $b + b' = b''$ where $b'' \in \ap_m(B)$ and $b' \in B$ is positive.
    Hence $nb' \in C$ is positive and thus $c + nb' \in C$.
    However
    \[
        c + nb' = (ma + nb) + nb' = ma + n(b + b') = ma + nb'' \in C,
    \]
    and \eqref{eqn: C decomp} implies that $a \in A$, a contradiction.
    Therefore $b \in \maxap_m(B)$.
    Hence $\pf(C) \subseteq m\pf(A) + n\maxap_m(B)$, and we conclude that the sets are equal.
    Since $m$ and $n$ are coprime, the decomposition
    \[
        \pf(C) = m\pf(A) + n\maxap_m(B)
    \]
    gives us a bijection $\pf(C) \cong \pf(A) \times \maxap_m(B)$.
    Thus
    \[
        \t(C) = \t(A)|\maxap_m(B)|.\qedhere
    \]
\end{proof}

\begin{remark}
\label{remark: Nari}
    If $m \in B$, then $\maxap_m(B) = \pf(B) + m$ and $|\maxap_m(B)| = \t(B)$.
    Thus we get 
    \[
        \t(mA + nB) = \t(A)\t(B)
    \]
    in that case, which is precisely the conclusion of Nari's result.
    Our result does not require $m \in B$, but gives a similar conclusion.
\end{remark}

\begin{example}
    Proposition~\ref{prop: gluing type} gives an alternative proof that for $A := \langle m, n\rangle$ the powers $A^k$ is symmetric for all $k\geq 1$.
    Note that $A^{k+1} = \langle m,n\rangle A^k = mA^k + n^kA$, $m$ and $n^k$ are coprime, and $n^k \in A^k$.
    Hence Proposition~\ref{prop: gluing type} implies that $\t(A^{k+1}) = \t(A^k)|\maxap_{m}(A)|$.
    Since $m \in A$, Remark~\ref{remark: Nari} gives $|\maxap_m(A)| = \t(A) = 1$, and thus by induction that $\t(A^k) = \t(A) = 1$ for all $k\geq 1$.
    Hence $A^k$ is symmetric for all $k \geq 1$.
\end{example}

The next proposition allows us to construct examples where we can answer Question~\ref{question: type growth}.

\begin{proposition}
\label{prop: quadratic reduction}
    Let $m \geq 1$ and let $B$ be a numerical semigroup with $n \in B$ coprime to $m$.
    If $A := \langle m\rangle + B$ and $B^2 \subseteq mA + nB$, then $\t(A^{k}) = \t(A)|\maxap_m(B)|^{k-1}$ for all $k\geq 1$.
    In particular, 
    \[
        \lim_{k\to\infty} \frac{\log \t(A^k)}{k} = \log |\maxap_m(B)|.
    \]
\end{proposition}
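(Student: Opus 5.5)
The plan is to write each power $A^{k+1}$ as a gluing $mA^k + n^kB$ to which Proposition~\ref{prop: gluing type} applies, and then induct on $k$.

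\textbf{Step 1: the identity $A^{k+1} = mA^k + nB^k$.} Since $A = \langle m\rangle + B$, multiplying by $A^k$ gives $A^{k+1} = mA^k + BA^k$. Expanding $A^k = (\langle m\rangle + B)^k = \sum_{i+j=k} m^iB^j$, every term of $BA^k$ except $B^{k+1}$ already lies in $mA^k$. So it suffices to show $B^{k+1} \subseteq mA^k + nB^k$ for all $k \geq 1$.

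For $k=1$ this is exactly the hypothesis $B^2 \subseteq mA + nB$. Inductively, $B^{k+1} = B\cdot B^k$. Using the inductive identity $A^k = mA^{k-1} + nB^{k-1}$, the hypothesis, and $nB \subseteq A$, we get
\[
B^{k+1} = B^{k-1}B^2 \subseteq B^{k-1}(mA + nB) = mAB^{k-1} + nB^k .
\]
Since $AB^{k-1} \subseteq A^k$ (because $B \subseteq A$), this lands in $mA^k + nB^k$. Hence $A^{k+1} = mA^k + nB^k$.

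\textbf{Step 2: apply Proposition~\ref{prop: gluing type}.} Replacing $n$ by $n^k$ is awkward, since we want $n$ itself as the gluing coefficient. Instead, use the form $A^{k+1} = mA^k + n\,B^k$ and take the gluing coefficient to be $n$ and the second semigroup to be $B^k$. The hypotheses are satisfied: $\gcd(m,n)=1$, and $n \in B \subseteq A \subseteq A^k$? The containment $A \subseteq A^k$ fails in general, so this needs repair: $n\in A^k$ is not automatic. If it does fail, use instead $A^{k+1} = m A^k + n^k B'$ with $n^k \in A^k$ and $n^k$ coprime to $m$, after rewriting $nB^k$ appropriately.

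The alternative route is to glue the other way: $A^{k+1} = mA^k + nB^k$, where the relevant condition in Proposition~\ref{prop: gluing type} is "$n \in A$" with $A$ the \emph{first} semigroup, here $A^k$. I would first try to establish $nB^{k} \subseteq A^{k+1}$ while only needing some element coprime to $m$ lying in $A^k$. The natural choice is $n^k$, giving the gluing $A^{k+1} = mA^k + n^k\cdot(\text{something})$. That requires rewriting $nB^k$ as $n^k B''$, which is not obviously possible. This bookkeeping is the step I expect to be the main obstacle.

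\textbf{Step 3: simplify the Apéry factor.} Proposition~\ref{prop: gluing type} gives $\t(A^{k+1}) = \t(A^k)\,|\maxap_m(B^k)|$. It remains to show $|\maxap_m(B^k)| = |\maxap_m(B)|$, or more precisely that each step contributes exactly the factor $|\maxap_m(B)|$. The quadratic hypothesis $B^2 \subseteq mA+nB$ should force the Apéry structure modulo $m$ of the relevant semigroup to stabilize. I would try to show this by a decomposition analogous to \eqref{eqn: C decomp}: express $\ap_m$ of the glued piece via $\ap_m(B)$ and identify its maximal elements with those of $\maxap_m(B)$.

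If the bookkeeping in Step 2 is arranged so that the second semigroup is always $B$ itself, Step 3 is immediate. Induction then gives $\t(A^k) = \t(A)|\maxap_m(B)|^{k-1}$. The limit statement follows by taking logarithms and dividing by $k$.
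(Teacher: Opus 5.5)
There is a genuine gap, at exactly the point you flag. Your Step 1 identity $A^{k+1} = mA^k + nB^k$ is correct, but Proposition~\ref{prop: gluing type} cannot be applied to it. That proposition needs the coefficient of the second semigroup to lie in the first, i.e.\ $n \in A^k$, and this fails in general (e.g.\ whenever $0 < n < m_0(A)^k$). Your Step 3 then rests on the hope that $|\maxap_m(B^k)| = |\maxap_m(B)|$, which you never justify and which is not what is needed.

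The underlying problem is that you use the hypothesis $B^2 \subseteq mA + nB$ only once, which leaves a $B^k$ behind. The missing idea is to use it repeatedly, so that the whole power of $B$ is pushed into the coefficient. Your worry that ``we want $n$ itself as the gluing coefficient'' is also misplaced. The gluing proposition only needs coprimality and membership, and $n^k$ has both: $\gcd(m,n^k)=1$ and $n^k \in B^k \subseteq A^k$. The ``$B''$'' you were looking for is just $B$. You do not need to rewrite $nB^k$ itself; you only need all of $B^{k+1}$ to be absorbed into $mA^k + n^kB$.

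Concretely, prove by induction that $B^{k+1} \subseteq mA^k + n^kB$. The case $k=1$ is the hypothesis. If it holds for $k$, then
\[
B^{k+2} \subseteq B\,(mA^k + n^kB) \subseteq mA^{k+1} + n^k(mA + nB) \subseteq mA^{k+1} + n^{k+1}B.
\]
The last step uses $B \subseteq A$ and $n^kA \subseteq A^{k+1}$.

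Combine this with your own observation $A^{k+1} = mA^k + B^{k+1}$ and the reverse containment $n^kB \subseteq B^{k+1} \subseteq A^{k+1}$. This gives $A^{k+1} = mA^k + n^kB$.

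Now apply Proposition~\ref{prop: gluing type} with first semigroup $A^k$, second semigroup $B$, and coprime coefficients $m$ and $n^k$, where $n^k \in A^k$. This yields $\t(A^{k+1}) = \t(A^k)\,|\maxap_m(B)|$ directly, with no analysis of $\maxap_m(B^k)$. Induction and taking logarithms finish the proof.
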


\begin{proof}
    First observe that
    \[
        A^{k+1} = (\langle m\rangle + B)A^k = mA^k + B^{k+1}.
    \]
    Our assumption $B^2 \subseteq mA + nB$ allows us to make a simple inductive argument that for all $k\geq 1$
    \[
        B^{k+1} \subseteq mA^k + n^k B,
    \]
    hence
    \[
        A^{k+1} = mA^k + n^k B.
    \]
    Since $n^k \in A^k$, Proposition~\ref{prop: gluing type} implies that
    \[
        \t(A^{k+1}) = \t(A^k)|\maxap_m(B)|.
    \]
    Thus a simple induction implies that $\t(A^{k}) = \t(A)|\maxap_m(B)|^{k-1}$ for all $k\geq 1$.
    Hence
    \[
        \lim_{k\to\infty}\frac{\log \t(A^k)}{k} 
        = \log|\maxap_m(B)|.\qedhere
    \]
    
\end{proof}

\begin{example}
\label{ex: exp type growth}
    Let $A := \langle 4, 7, 9\rangle = \langle 4\rangle + B$ where $B := \langle 7,9\rangle$, and let $n = 7$.
    Then $B^2 = \langle 49, 63, 81\rangle$.
    Clearly $49, 63 \in 7B$ and $81 = 4\cdot 8 + 7\cdot 7 \in 4A + 7B$, hence $B^2 \subseteq 4A + 7B$.
    Therefore Proposition~\ref{prop: quadratic reduction} implies that
    \[
        \t(A^{k+1}) = \t(A^k)|\maxap_4(B)|.
    \]
    Note that $\ap_4(B) = \{0,7,9,14\}$, hence $\maxap_4(B) = \{9,14\}$.
    Thus by induction and $\t(A) = 2$ it follows that $\t(A^k) = 2^k$.
\end{example}

We close this section with one final application of Proposition~\ref{prop: gluing type}.

\begin{proposition}
    Let $1 < m < n_1 < n_2$ be coprime integers.
    If $n_1 \equiv n_2 \bmod m$, then the product $\langle m, n_1\rangle\langle m, n_2\rangle$ is symmetric.
\end{proposition}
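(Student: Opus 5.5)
The plan is to write the product as a gluing $mA + nB$ and apply Proposition~\ref{prop: gluing type} with the simplest possible choice $B = \NN$. Expanding generators gives
\[
    \langle m, n_1\rangle\langle m, n_2\rangle = \langle m^2, mn_1, mn_2, n_1n_2\rangle = m\langle m, n_1, n_2\rangle + n_1n_2\NN .
\]
So I would set $A := \langle m, n_1, n_2\rangle$, $B := \NN$, and $n := n_1n_2$.

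First I check the hypotheses of Proposition~\ref{prop: gluing type}. The element $n = n_1n_2$ lies in $A$. The integers $m$ and $n_1n_2$ are coprime: I read the hypothesis as saying $m$ is coprime to $n_1$ and to $n_2$. In any case $n_1 \equiv n_2 \bmod m$ gives $\gcd(m,n_2) = \gcd(m,n_1)$, so coprimality to $n_1$ suffices. Both $A$ and $B$ are numerical semigroups. The proposition then yields
\[
    \t(\langle m,n_1\rangle\langle m,n_2\rangle) = \t(A)\,|\maxap_m(\NN)| .
\]

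Next I compute the two factors. We have $\ap_m(\NN) = \{0,1,\ldots,m-1\}$, and $\leq_\NN$ is the usual order on $\NN$. Hence its unique maximal element is $m-1$, so $|\maxap_m(\NN)| = 1$. Alternatively, since $m \in \NN$, Remark~\ref{remark: Nari} gives $|\maxap_m(\NN)| = \t(\NN)$, which is $1$ under the convention $\pf(\NN) = \{-1\}$; I would rely on the direct computation to avoid that convention.

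For $\t(A)$, the congruence gives $n_2 = n_1 + jm$ with $j \geq 1$, since $n_2 > n_1$. Thus $n_2 \in \langle m, n_1\rangle$ and $A = \langle m, n_1\rangle$. This is a two-generator numerical semigroup, hence symmetric, so $\t(A) = 1$. Therefore the product has type $1$ and is symmetric.

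There is no real obstacle here. The step needing the most care is verifying the generator identity and the coprimality hypothesis of Proposition~\ref{prop: gluing type}. It also matters that the decomposition \eqref{eqn: C decomp} used in that proof holds with $B = \NN$, which it does because $\ap_m(\NN)$ is a complete residue system modulo $m$.
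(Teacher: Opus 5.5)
Your proof is correct, but it uses a different gluing than the paper.

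\textbf{The paper's route.} The paper writes the product as $\langle m,n_2\rangle A = mA + n_2A$ with $A = \langle m,n_1\rangle$. There the congruence is needed just to verify the hypothesis $n_2 \in A$ of Proposition~\ref{prop: gluing type}. The second factor is then handled by the remark following that proposition (since $m \in A$), giving $\t(C) = \t(A)\,|\maxap_m(A)| = \t(A)^2 = 1$.

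\textbf{Your route.} You glue $A$ with $B = \NN$ along $n = n_1n_2$. Then the hypotheses of Proposition~\ref{prop: gluing type} hold without using the congruence, and the second factor is the trivial $\maxap_m(\NN) = \{m-1\}$. You avoid the remark entirely, which is wise: under the paper's literal definition $\pf(\NN) = \emptyset$, so the remark would give $0$ for $B = \NN$.

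The only cost is applying the proposition in the degenerate case $B = \NN$, and its proof goes through unchanged there. The one step in that proof sensitive to a degenerate factor is the deduction $a \in \NN$, which needs $A \neq \NN$; your $A$ is not $\NN$ since $m > 1$.

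\textbf{What each buys.} Your decomposition gives a sharper result. For $1 < m < n_1 < n_2$ with $m$ coprime to $n_1n_2$, the same computation yields $\pf(\langle m,n_1\rangle\langle m,n_2\rangle) = m\,\pf(\langle m,n_1,n_2\rangle) + (m-1)n_1n_2$. Hence $\t(\langle m,n_1\rangle\langle m,n_2\rangle) = \t(\langle m,n_1,n_2\rangle)$, so the product is symmetric exactly when $\langle m,n_1,n_2\rangle$ is. The congruence is only one way to guarantee this. For example, $\langle 5,6\rangle\langle 5,9\rangle$ is symmetric because $\langle 5,6,9\rangle$ is (Frobenius number $13$, genus $7$), even though $6 \not\equiv 9 \bmod 5$.

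The paper's route instead exhibits the type of the product as the square $\t(A)^2$. This matches its treatment of powers via $A^{k+1} = mA^k + n^kA$.
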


\begin{proof}
    Let $A := \langle m, n_1\rangle$, $B := \langle m, n_2\rangle$, and $C := AB$.
    By definition we have
    \begin{equation}
    \label{eqn: 2 gen product}
        C= \langle m, n_1\rangle\langle m, n_2\rangle = mA + n_2A.
    \end{equation}
    Since $n_1 < n_2$ and $n_1 \equiv n_2 \bmod m$, we have $n_2 \in A$.
    Thus Proposition~\ref{prop: gluing type} and Remark~\ref{remark: Nari} imply that $\t(C) = \t(A) |\maxap_m(A)| = \t(A)^2 = 1$.
    Therefore $C$ is symmetric.
\end{proof}

\subsection{Frobenius and Genus}
\label{sec: frob and gen}

Suppose $A$ is a numerical semigroup and let 
\[
    \mmax(A) := \max_{p} m_p(A).
\]
Since $m_p(A) = m_0(A)$ for all primes $p \nmid m_0(A)$, this maximum is well-defined.
The invariant $\mmax(A)$ controls the growth rate of genera and Frobenius numbers of powers of $A$.

\begin{theorem}
\label{thm: frob asymptotic}
    Let $A$ be a numerical semigroup.
    Then
    \[
        \lim_{k\to\infty}\frac{\log g(A^k)}{k} = \lim_{k\to\infty} \frac{\log F(A^k)}{k} =  \log \mmax(A).
    \]
\end{theorem}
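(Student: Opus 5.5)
Write $M := \mmax(A)$ and $p$ for a prime with $m_p(A) = M$. If $M = 1$, then $1 \in A$, so $A = \NN$ and every power is $\NN$; this degenerate case is excluded or handled separately. Assume $M \geq 2$.

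I will squeeze both $g(A^k)$ and $F(A^k)$ between two bounds of the form $(\text{const})\cdot(\text{poly in }k)\cdot M^k$. The basic inequalities are
\[
    g(A^k) \leq F(A^k) + 1 \leq 2g(A^k).
\]
The second holds because for each $0 \leq x \leq F$, at most one of $x$ and $F - x$ lies in $A^k$. So it suffices to show
\[
    \liminf_k \tfrac{1}{k}\log g(A^k) \geq \log M
    \quad\text{and}\quad
    \limsup_k \tfrac{1}{k}\log F(A^k) \leq \log M.
\]

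\textbf{Lower bound.} By Lemma~\ref{lemma: m_p multiplicative}, $m_p(A^k) = M^k$. Hence every element of $A^k$ below $M^k$ is divisible by $p$. The gaps of $A^k$ therefore include all integers in $[1, M^k)$ not divisible by $p$. There are at least $(1 - 1/p)(M^k - 1) - 1$ of these, which gives $g(A^k) \gg M^k$ and the liminf bound.

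\textbf{Upper bound.} This is the main step. I would pass to a two-generator subsemigroup and use Theorem~\ref{thm: two-gen powers}.
\begin{enumerate}
    \item Choose coprime $a, b \in A$. If $A \subseteq A'$, then $F(A) \leq F(A')$, so any $S = \langle a, b\rangle \subseteq A$ with $F(S^k) \approx \max(a,b)^k \cdot \mathrm{poly}(k)$ gives a bound. That bound is only $\max(a,b)$, not $M$, which is too weak.
    \item Instead, bound $F(A^k)$ via an Ap\'ery set of $A^k$ with respect to $M_k := m_0(A)^k$. We have $F(A^k) = \max \ap(A^k) - M_k$, so it suffices to show every residue class mod $m_0(A)^k$ contains an element of $A^k$ of size at most $\mathrm{poly}(k)\, C^k M^k$ with $C$ arbitrarily close to $1$. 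Heuristically the exponent only improves, via $\limsup_k \frac{1}{k}\log F(A^k)$, if we can replace $A$ by a power $A^r$ and use $M(A^r) = M^r$.
    \item The cleaner route is CRT with the primes dividing $m_0 := m_0(A)$. For each prime $q \mid m_0$, $m_q(A) \leq M$ gives an element $u_q \in A$ with $q \nmid u_q$ and $u_q \leq M$. Using products of $m_0$ and the $u_q$, build elements of $A^k$ of the form $m_0^{i}\prod_q u_q^{j_q}$ with $i + \sum j_q = k$. These are all $\leq M^k$.
    \item Sums of at most $m_0^k$ such elements hit every residue class mod $m_0^k$. I would show this by a Kiers–O'Neil–Ponomarenko style digit expansion generalizing Theorem~\ref{thm: two-gen powers}(3): write residues in a mixed $q$-adic expansion, using at most $\mathrm{poly}(k)\cdot m_0$ summands at each level.
\end{enumerate}

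The difficulty in step 4 is that residues mod $m_0^k$ need up to about $m_0^k$ summands, which seems to give only $m_0^k M^k$. The remedy is that residue classes mod $m_0^{k}$ refine level by level. At level $i$ one needs residues mod $m_0$ times $m_0^{i}$, realized by elements $m_0^{i}\cdot(\text{stuff of degree }k-i)$ of size about $m_0^{i} M^{k-i}$. The total is
\[
    \sum_i m_0 \cdot m_0^{i} M^{k-i},
\]
and this is dominated by $M^k$ only when $M \geq m_0$, which is false in general.

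So the honest plan for the upper bound is the following.
\begin{itemize}
    \item First treat the \emph{pure prime-power multiplicity} case via Lemma~\ref{lemma: m_pk among generators} and the $S_p$ series. In this case the Ap\'ery elements can be chosen as products of the minimal elements $m_{p,j}(A)$, and the relevant size is governed by $\min_j m_{p,j}$-convolutions.
    \item Then combine the primes $q \mid m_0$ by CRT.
\end{itemize}
I expect this Ap\'ery-set size estimate to be the main obstacle. If it resists, I would try the route through the integral closure instead: use Proposition~\ref{prop: power certificate} to compare $A^k$ with a product of two-generator powers whose Frobenius numbers are explicit by Theorem~\ref{thm: two-gen powers}(1).
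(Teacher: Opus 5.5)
Your reduction $g(A^k)\le F(A^k)+1\le 2g(A^k)$ and your lower bound are correct; the lower bound is exactly the paper's argument. The gap is the upper bound $\limsup_k \frac1k\log F(A^k)\le\log\mmax(A)$, which you rightly call the main step but never prove. The proposal ends with a plan and a fallback, neither carried out. Moreover, you abandoned your most promising line on a false premise. The inequality $\mmax(A)\ge m_0(A)$ always holds, since $m_p(A)\ge m_0(A)$ for every $p$; you even use it in step 3 to get $m_0^i\prod_q u_q^{j_q}\le M^k$. So a bound like $\sum_{i<k} m_0^{i+1}M^{k-i}\le k\,m_0M^k$ is perfectly adequate for the logarithmic limit. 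What is genuinely missing is a proof that every residue class mod $m_0^k$ is hit by an element of $A^k$ of size $O(\mathrm{poly}(k)\,M^k)$. This needs care when $m_0$ has several prime factors, because $A$ may contain no element coprime to $m_0$ of size at most $M$: for $A=\langle 6,8,9\rangle$ one has $M=9$, but the least element coprime to $6$ is $17$.

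The paper closes the gap with the idea you discarded in step 1, applied once per prime and with non-coprime pairs.
For each prime $p_i\mid m:=m_0(A)$, let $n_i:=m_{p_i}(A)\le M$ and $d_i:=\gcd(m,n_i)$. Then $\langle m,n_i\rangle=d_i\langle m_i',n_i'\rangle$ with $m_i'<n_i'$ coprime.
Put $W_i:=d_i^k\ap(\langle m_i',n_i'\rangle^k)\subseteq A^k$. This set meets every class of the subgroup $d_i^k\ZZ/m^k\ZZ$, and Theorem~\ref{thm: two-gen powers} gives $\max W_i\le c_i n_i^k\le c_iM^k$ with $c_i$ independent of $k$.
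Because $p_i\nmid d_i$, these subgroups together generate $\ZZ/m^k\ZZ$. So the sumset $W_1+\cdots+W_s\subseteq A^k$ represents every class mod $m^k$.
This gives $\max\ap(A^k)\le(\sum_i c_i)M^k$, hence $F(A^k)\le cM^k$. The paper instead passes through Selmer's formula for $g$ and then uses $F\le 2g-1$, but your direct Ap\'ery bound for $F$ works equally well.
This is your CRT idea made rigorous: the explicit two-generator formulas replace the unproved digit-expansion step, one prime at a time.

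Your integral-closure fallback would also run backwards relative to the paper. Corollary~\ref{cor: mmax criteria} and Theorem~\ref{thm: integral closure} are deduced from this theorem, and Proposition~\ref{prop: power certificate} alone gives no control over Frobenius numbers.
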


\begin{proof}
    First we consider $g(A^k)$.
    Let $m := m_0(A)$ and $n := \mmax(A)$.
    Let $p$ be a prime such that $n = m_p(A)$.
    Then $m_p(A^k) = n^k$ and any natural number $j \leq n^k$ which is not divisible by $p$ is not an element of $A^k$.
    Thus
    \[
        g(A^k) \geq \Big(1 - \frac{1}{p}\Big)n^k,
    \]
    which implies that
    \begin{equation}
    \label{eqn: liminf ineq}
        \liminf_{k\to\infty}\frac{\log g(A^k)}{k} \geq \log n.
    \end{equation}

    Let $m = p_1^{e_1}\cdots p_s^{e_s}$ be the prime factorization of $m_0(A)$, and let $n_i := m_{p_i}(A)$.
    Let $d_i := \gcd(m, n_i)$ and define $A_i := \langle m, n_i\rangle$.
    Note that $p_i \nmid d_i$.
    Setting $m = d_i m_i'$ and $n_i = d_i n_i'$ we have $A_i = d_i A_i'$, where $A_i' := \langle m_i', n_i'\rangle$ is a numerical semigroup.
    
    For each $i$, let
    \(
        W_i := d_i^k\ap({A_i'}^k).
    \)
    Since \(A_i=d_iA_i'\), we have
    \(
        W_i \subseteq A_i^k \subseteq A^k.
    \)
    The elements of $\ap({A_i'}^k)$ represent every residue class modulo ${m_i'}^k$, hence $W_i$ represents all the elements of the subgroup
    \[
        H_i := d_i^k\ZZ/\langle m^k \rangle
        \subseteq \ZZ/\langle m^k\rangle.
    \]
    Since $p_i\nmid d_i$ for each $i$ we have
    \(
        \gcd(d_1^k,\dots,d_s^k,m^k)=1.
    \)
    Thus
    \begin{equation}
    \label{eqn: H sum}
        H_1+\cdots+H_s=\ZZ/\langle m^k\rangle.
    \end{equation}
    If 
    \[
        W := W_1+\cdots+W_s
        =
        \{w_1+\cdots+w_s : w_i\in W_i\} \subseteq A^k,
    \]
    then \eqref{eqn: H sum} implies that $W$ contains representatives of every congruence class modulo $m^k$.
    Hence
    \[
        \max_{w\in \ap(A^k)} w
        \leq \max W
        \leq \sum_{i=1}^s \max W_i.
    \]
    For each $i$, Theorem~\ref{thm: two-gen powers}(1) implies that
    there is a positive constant $c_i$, independent of $k$, such that
    \[
        \max \ap({A_i'}^k)
        =
        F({A_i'}^k)+{m_i'}^k
        \leq c_i{n_i'}^k.
    \]
    Therefore
    \[
        \max W_i
        =
        d_i^k\max \ap({A_i'}^k)
        \leq c_i d_i^k{n_i'}^k
        =
        c_i n_i^k
        \leq c_i n^k.
    \]
    Setting $c:=\sum_{i=1}^s c_i$, we get
    \[
        \max_{w\in \ap(A^k)} w
        \leq cn^k.
    \]
    Selmer's formula \cite[Prop. 2.12]{rosales2009numerical} now gives
    \[
        g(A^k)
        =
        \frac{1}{m^k}
        \sum_{w\in\ap(A^k)} w
        -
        \frac{m^k-1}{2}
        \leq
        cn^k.
    \]
    Hence
    \[
        \limsup_{k\to\infty}\frac{\log g(A^k)}{k}
        \leq \log n.
    \]
    Together with \eqref{eqn: liminf ineq} this proves
    \[
        \lim_{k\to\infty} \frac{\log g(A^k)}{k} = \log n.
    \]
    Next we turn to $F(A^k)$.
    Let $S(A)$ denote the number of $a \in A$ such that $a < F(A)$.
    Then $F(A) = g(A) + S(A) - 1$.
    Note that if $a \in A$ is smaller than $F(A)$, then $F(A) - a$ must be an element of the gap set.
    Hence $S(A) \leq g(A)$, which implies $F(A) \leq 2g(A) - 1$.
    Thus
    \[
        \limsup_{k\to\infty} \frac{\log F(A^k)}{k} \leq \lim_{k\to \infty} \frac{\log g(A^k)}{k} = \log n.
    \]
    For the other direction, note that $g(A) \leq F(A)$ for any numerical semigroup $A$.
    Hence
    \[
        \liminf_{k\to\infty} \frac{\log F(A^k)}{k} \geq \lim_{k\to \infty} \frac{\log g(A^k)}{k} = \log n.
    \]
    Thus
    \[
        \lim_{k\to\infty} \frac{\log F(A^k)}{k} = \log n.\qedhere
    \]
\end{proof}

\section{Integral Closure of Ideals in $\NN$}
\label{sec: integral closure N}

In this section we provide a simple and efficient characterization of the integral closure of an ideal $A \subseteq \NN$.
We begin with the following corollary of Theorem~\ref{thm: frob asymptotic}.

\begin{corollary}
\label{cor: mmax criteria}
    Let $A \subseteq \NN$ be a numerical semigroup.
    If $a \geq \mmax(A)$, then for all sufficiently large $k \geq 1$ we have $a^k \in A^k$.
    In particular, $a \geq \mmax(A)$ implies $a \in \o A$.
\end{corollary}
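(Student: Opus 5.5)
We split into the cases $a > \mmax(A)$ and $a = \mmax(A)$. In the first case we use the growth rate of Frobenius numbers from Theorem~\ref{thm: frob asymptotic}; in the second we use the definition of $\mmax$ directly. Integrality then follows in both cases from Proposition~\ref{prop: power certificate}.

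First suppose $a = \mmax(A)$. By definition $\mmax(A) = m_p(A)$ for some prime $p$. By construction $m_p(A) = \min(A \setminus \langle p\rangle)$, and this is finite because $A$ is a numerical semigroup. Hence $a \in A$, and so $a^k \in A^k$ for every $k \geq 1$, since $A^k$ contains all $k$-fold products of elements of $A$.

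Now suppose $a > \mmax(A) =: n$. If $A = \NN$, then $A^k = \NN$ and the claim is trivial, so assume $A \neq \NN$. Then every power $A^k$ is a proper numerical semigroup (its multiplicity is $m_0(A)^k > 1$), so $F(A^k)$ is defined and positive. Choose a real number $\varepsilon > 0$ with $\log n + \varepsilon < \log a$. Theorem~\ref{thm: frob asymptotic} gives $\log F(A^k)/k \to \log n$. So for all sufficiently large $k$ we have $F(A^k) < e^{k(\log n + \varepsilon)} < a^k$. Since every natural number larger than the Frobenius number lies in the semigroup, we get $a^k \in A^k$ for all large $k$.

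Finally, $A$ is a nonzero ideal of $\NN$, so $A \in \idfg(\NN) = \ideal(\NN)\setminus\{\langle 0\rangle\}$, because every nonzero ideal of $\NN$ is finitely generated and not a zero divisor. Proposition~\ref{prop: power certificate} therefore applies and gives $a \in \o A$. There is no real obstacle here. The only point needing care is the boundary case $a = \mmax(A)$, where the asymptotic estimate alone does not suffice; it is handled by the observation that $\mmax(A)$ is itself an element of $A$.
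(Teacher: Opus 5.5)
Your proposal is correct and follows the paper's proof: the boundary case $a = \mmax(A)$ uses $\mmax(A) \in A$, the case $a > \mmax(A)$ uses Theorem~\ref{thm: frob asymptotic} to get $F(A^k) < a^k$ for large $k$, and Proposition~\ref{prop: power certificate} then gives $a \in \o A$. Your separate treatment of $A = \NN$ (where $\log F(A^k)$ is undefined) and your check that $A \in \idfg(\NN)$ are small points the paper leaves implicit.
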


\begin{proof}
    Since $a = \mmax(A) \in A$, that case is immediate.
    Suppose that $a > \mmax(A)$.
    Then Theorem~\ref{thm: frob asymptotic} implies that
    \[
        \lim_{k\to\infty} \frac{\log F(A^k)}{k} = \log \mmax(A) < \log a,
    \]
    hence for all sufficiently large $k$ we have
    \[
        F(A^k) < a^k.
    \]
    Thus by definition of the Frobenius number, $a^k \in A^k$.
    Therefore $a \in \o A$ by Proposition~\ref{prop: power certificate}.
\end{proof}

In Section \ref{sec: multiplicities} we constructed the $S_p$ homomorphisms defined by $S_p(A) = \sum_{k\geq 0} m_{p,k}(A)T^k$.
The semiring $\trop[T]$ is not multiplicatively cancellative, hence the $S_p$ are not well-defined on similarity classes.
Note that evaluating the series $S_p(A)$ at $T = \lambda \geq 1$ yields a nonzero real number for all nonzero ideals $A$.
Let $\rtrop$ denote the tropical min-times semiring of nonnegative real numbers together with $\infty$.
We define the \textbf{$p$-support functions} 
\[
    \Phi_{p,\lambda}(A) := S_p(A)(\lambda) = \min_{k\geq 0}m_{p,k}(A)\lambda^k \in \rtrop,
\]
where $\lambda \geq 1$ is a real parameter.
If $A \neq \langle 0\rangle$, then our alternative formula \eqref{eqn: S_p alt} for $S_p$ translates to
\begin{equation}
\label{eqn: alt formula for Phi}
    \Phi_{p,\lambda}(A) = \min_{a \in A\setminus\{0\}} a\lambda^{v_p(a)}.
\end{equation}

With $p$ and $\lambda$ fixed, Lemma \ref{lemma: S_p hom} implies that $\Phi_{p,\lambda} : \ideal(\NN) \to \rtrop$ is a semiring homomorphism.
If $A \neq 0$, then $\Phi_{p,\lambda}(A)$ is a positive real number.
Hence $\Phi_{p,\lambda}$ is cancellative on nonzero ideals.
The data packaged in the functions $\Phi_{p,\lambda}$ with $p$ fixed and $\lambda$ varying can also be encoded as a Newton polygon.
Given a prime $p$, let 
\[
    N_p(A) := \text{lower convex hull of } \{(k, \log m_{p,k}(A)) : k \geq 0\}.
\]
We call $N_p(A)$ the \textbf{$p$th Newton polygon} of $A$.
Let $V_p(a) := (v_p(a), \log(a))$.

\begin{lemma}
\label{lemma: support function translation}
    Let $p$ be prime, let $A \subseteq \NN$ be an ideal, and let $a \in \NN$.
    Then $V_p(a) \in N_p(A)$ if and only if
    \(
        \Phi_{p,\lambda}(A) \leq a \lambda^{v_p(a)}
    \)
    for all $\lambda \geq 1$.
\end{lemma}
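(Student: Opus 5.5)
The plan is to read both conditions as statements about the same convex function, using Legendre (support-function) duality. Throughout I take $N_p(A)$ to be the closed convex region lying on or above the lower convex hull of the points $P_k := (k, \log m_{p,k}(A))$ for $k \geq 0$. In other words, $N_p(A)$ is the epigraph, over $x \geq 0$, of the largest convex function $h$ with $h(k) \leq \log m_{p,k}(A)$ for all $k$.

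First I dispose of degenerate cases. If $A = \langle 0\rangle$, every $m_{p,k}(A) = \infty$, so $N_p(A)$ is empty and $\Phi_{p,\lambda}(A) = \infty$, and both sides fail. If $a = 0$, then $\log a = -\infty$: no such point lies in $N_p(A)$, and the inequality $\Phi_{p,\lambda}(A) \leq 0$ also fails. The paper's conventions should be checked here, and the statement may implicitly assume $a, A$ nonzero. So assume $A \neq 0$ and $a \geq 1$.

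Write $t = \log \lambda \geq 0$ and $\varphi(t) := \log \Phi_{p,e^t}(A) = \min_k (\log m_{p,k}(A) + tk)$. This is finite because $m_{p,k}(A)$ is finite for large $k$, by Lemma~\ref{lemma: N ideal is principal times NS} and the fact that $gB$ with $B$ a numerical semigroup contains elements of bounded $p$-valuation.

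The key step is the identity
\[
h(v) = \sup_{t \geq 0}\big(\varphi(t) - tv\big) \qquad (v \geq 0).
\]
I would prove it in two parts.
\begin{enumerate}
\item An affine function $\ell(x) = c - tx$ lies below every $P_k$ if and only if $c \leq \varphi(t)$, where $\varphi(t) = -\infty$ is allowed. Since $h$ is the supremum of its affine minorants (closed convex hull, supporting hyperplanes), $h(v)$ equals the supremum of $\varphi(t) - tv$ over all real $t$.
\item Only $t \geq 0$ matters. The sequence $m_{p,k}(A)$ is weakly decreasing and eventually constant, so for $t < 0$ the quantity $\log m_{p,k} + tk$ tends to $-\infty$. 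Hence $\varphi(t) = -\infty$ there, and those slopes contribute nothing.
\end{enumerate}

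With the identity in hand, the lemma follows by unwinding definitions:
\[
V_p(a) \in N_p(A) \iff \log a \geq h(v_p(a)) \iff \log a \geq \varphi(t) - t\,v_p(a) \text{ for all } t \geq 0,
\]
and the last condition is exactly $\Phi_{p,\lambda}(A) \leq a\lambda^{v_p(a)}$ for all $\lambda \geq 1$.

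The main obstacle is the convex-analysis step in part 1, namely that the lower hull of infinitely many points is recovered exactly by its affine minorants. This includes closedness of the hull and the behaviour as $k \to \infty$, where the hull becomes a horizontal ray at height $\log \min_k m_{p,k}$. I expect to handle it concretely rather than by citing general duality. Only finitely many distinct values $m_{p,k}$ occur, since the sequence is decreasing in $\NN$. So $h$ is a piecewise-linear function with finitely many breakpoints and a final horizontal piece. Each linear piece, extended to all of $\RR$, is an explicit affine minorant with $t \geq 0$ touching $h$ along that piece, and taking the supremum over these pieces gives $h$ exactly.
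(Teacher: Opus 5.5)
Your argument is correct and is essentially the paper's proof: the paper also decides membership in $N_p(A)$ by testing against the lines $y = c - ux$ with $u = \log\lambda \geq 0$ and $c = \min_k\{ku + \log m_{p,k}(A)\}$. Your Legendre-duality identity $h(v) = \sup_{t\geq 0}(\varphi(t) - tv)$ expresses the same fact, and you justify the restriction to $t \geq 0$, the closedness of $N_p(A)$, and the degenerate cases more carefully than the paper does.

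One small caveat concerns your ``concrete'' route. When $p \mid \gcd(A)$, we have $k_0 := v_p(\gcd A) > 0$ and $m_{p,k}(A) = \infty$ for $k < k_0$. Then $N_p(A)$ has a vertical left edge and $h = +\infty$ on $[0,k_0)$, which no finite maximum of linear pieces recovers. In that range you must let $t \to \infty$ directly, using $\varphi(t) - tv \geq \log \min_k m_{p,k}(A) + t(k_0 - v)$.
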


\begin{proof}
    Every point $(x_0, y_0)$ which does not belong to the convex set $N_p(A)$ is separated from it by a line.
    Thus $(x_0,y_0) \in N_p(A)$ if and only if
    \[
        x_0u + y_0 \geq \min_{k\geq 0}\{ ku + \log m_{p,k}(A)\}
    \]
    for all $u \geq 0$.
    Substituting $u = \log \lambda$, this becomes
    \[
        x_0\log \lambda + y_0 \geq \min_{k\geq 0}\{ k\log \lambda + \log m_{p,k}(A)\}
        = \min_{k\geq 0} \log \big(m_{p,k}(A) \lambda^k\big)
        = \log \Phi_{p,\lambda}(A)
    \]
    for all $\lambda \geq 1$.
    Exponentiating we get $(x_0,y_0) \in N_p(A)$ if and only if
    \[
        \Phi_{p,\lambda}(A) \leq e^{y_0} \lambda^{x_0}
    \]
    for all $\lambda \geq 1$.
    Applying this criterion to $V_p(a) = (v_p(a), \log a)$ gives us the result.
\end{proof}

Given convex sets $X$ and $Y$, let 
\(
    X + Y = \{x + y : x \in X, y \in Y\}
\)
denote their Minkowski sum.
Let $\C$ denote the semigroup of convex subsets of $\RR^2$ with respect to Minkowski sum.

\begin{proposition}
\label{prop: Newton polygon}
    Given a prime $p$, the map $N_p : \ideal(\NN) \to \C$ is a semigroup homomorphism factoring through the universal cancellative quotient $A \mapsto [A]$.
    That is, $A \sim B$ implies that $N_p(A) = N_p(B)$.
\end{proposition}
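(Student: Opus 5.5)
The plan is to identify $N_p(A)$ with the epigraph-type set cut out by the support functions $\Phi_{p,\lambda}$, and then use their multiplicativity. We interpret $N_p(A)$ as the closed convex set lying on or above the lower convex hull, i.e.\ the convex hull of $\{(k,y) : y \geq \log m_{p,k}(A)\}$; this is the reading under which Lemma~\ref{lemma: support function translation} holds. Since $m_{p,k}(A)$ is weakly decreasing in $k$, this set is also stable under translation by $(1,0)$ and $(0,1)$. The zero ideal, for which $N_p$ is empty, is handled separately and trivially, so we assume $A,B$ nonzero. The proof of Lemma~\ref{lemma: support function translation} shows more generally that for any $(x,y)$ with $x \geq 0$,
\[
    (x,y) \in N_p(A) \iff y \geq h_A(u) - xu \text{ for all } u \geq 0,
\]
where $h_A(u) := \log \Phi_{p,e^u}(A) = \min_k \{ku + \log m_{p,k}(A)\}$. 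Thus $N_p(A)$ is determined by the concave function $h_A$ on $[0,\infty)$, and conversely $h_A$ is determined by $N_p(A)$, being its support function in the directions $(u,1)$.

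First, the homomorphism property. By Lemma~\ref{lemma: S_p hom}, evaluating at $T=\lambda$ gives $\Phi_{p,\lambda}(AB) = \Phi_{p,\lambda}(A)\Phi_{p,\lambda}(B)$, so $h_{AB} = h_A + h_B$. Support functions are additive under Minkowski sum. Both $N_p(A)+N_p(B)$ and $N_p(AB)$ are closed convex sets with the same recession cone (the quadrant), and they have the same support function in every direction $(u,1)$ with $u \geq 0$. A closed convex set of this shape is the intersection of its supporting half-planes in exactly these directions, so the two sets are equal.

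Closedness of the Minkowski sum deserves a brief check. Each $N_p(A)$ is the quadrant-closure of a polygon with finitely many vertices, because $m_{p,k}(A)$ stabilizes once $k \geq v_p(m_0(A))$. Hence the sum is again polyhedral and therefore closed.

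Alternatively, one can argue directly. The inclusion $N_p(A)+N_p(B) \subseteq N_p(AB)$ follows from $m_{p,i+j}(AB) \leq m_{p,i}(A)m_{p,j}(B)$ (Lemma~\ref{lemma: min-times convolution}) together with convexity. The reverse inclusion follows from the min-times convolution formula, since each point $(k,\log m_{p,k}(AB))$ equals a sum of a point from each polygon. I would present this direct version, as it avoids duality subtleties.

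Second, factoring through similarity. Suppose $AC = BC$ with $C$ nonzero. Then $N_p(A)+N_p(C) = N_p(B)+N_p(C)$. Minkowski addition of a fixed nonempty compact convex set is cancellative, but $N_p(C)$ is unbounded, so instead I cancel at the level of support functions. For every $u \geq 0$ we have $h_A(u) + h_C(u) = h_B(u) + h_C(u)$, and $h_C(u)$ is finite (a real logarithm of a positive number) because $C \neq 0$. Therefore $h_A = h_B$ on $[0,\infty)$, that is, $\Phi_{p,\lambda}(A) = \Phi_{p,\lambda}(B)$ for all $\lambda \geq 1$. The characterization above then gives $N_p(A) = N_p(B)$.

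The main obstacle is justifying that $N_p(A)$ is exactly recovered from $\{\Phi_{p,\lambda}(A)\}_{\lambda\geq 1}$ for points with arbitrary real $x \geq 0$, not just lattice abscissae. This is the separation argument of Lemma~\ref{lemma: support function translation}. The only care needed is that separating lines have slope direction $u \geq 0$, which is where the upward and rightward stability coming from monotonicity of $m_{p,k}$ is used.
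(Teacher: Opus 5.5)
Your proposal is correct and takes essentially the paper's route. The homomorphism property comes from the min-times convolution of Lemma~\ref{lemma: min-times convolution}, where you spell out both inclusions that the paper compresses into one sentence. The similarity invariance comes from cancelling the finite factor $\Phi_{p,\lambda}(C)$ and then recovering $N_p$ from $\{\Phi_{p,\lambda}\}_{\lambda\geq 1}$ via the separation argument of Lemma~\ref{lemma: support function translation}.
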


\begin{proof}
    Lemma~\ref{lemma: min-times convolution} implies that
    \[
        \log m_{p,k}(AB) = \min_{i+j = k}\{\log m_{p,i}(A) + \log m_{p,j}(B)\},
    \]
    which is precisely the expression for the vertices of the Minkowski sum of $N_p(A)$ and $N_p(B)$.
    Hence
    \[
        N_p(AB) = N_p(A) + N_p(B).
    \]
    Suppose $C \neq \langle 0 \rangle$ and $AC = BC$.
    Then $N_p(AC) = N_p(BC)$ and Lemma~\ref{lemma: support function translation} implies that 
    \[
        \Phi_{p,\lambda}(A)\Phi_{p,\lambda}(C) = \Phi_{p,\lambda}(AC) = \Phi_{p,\lambda}(BC) = \Phi_{p,\lambda}(B)\Phi_{p,\lambda}(C)    
    \]
    for all $\lambda \geq 1$.
    Then $C$ nonzero implies that $\Phi_{p,\lambda}(A) = \Phi_{p,\lambda}(B)$ for all $\lambda \geq 1$, which in turn implies $N_p(A) = N_p(B)$.
    Hence $N_p$ factors through $[\ideal(\NN)]$.
\end{proof}

We now present the main result of this section, an effective characterization of integral closures of ideals in $\NN$.

\begin{theorem}
\label{thm: integral closure}
    Let $A \subseteq \NN$ be an ideal and let $a \in \NN$.
    Then the following are equivalent
    \begin{enumerate}
        \item $a^k \in A^k$ for some $k\geq 1$,
        \item $a \in \o A$,
        \item $V_p(a) \in N_p(A)$ for all primes $p$.
    \end{enumerate}
\end{theorem}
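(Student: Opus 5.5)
The proof goes around the cycle (1)$\Rightarrow$(2)$\Rightarrow$(3)$\Rightarrow$(1). The first two implications are formal consequences of results already proved. The third carries the real content.

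\emph{(1)$\Rightarrow$(2).} If $A=\langle 0\rangle$, then $a^k\in A^k$ forces $a=0\in\o A$. Otherwise $A\in\idfg(\NN)$, since every nonzero ideal of $\NN$ is finitely generated and regular, and Proposition~\ref{prop: power certificate} applies directly.

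\emph{(2)$\Rightarrow$(3).} By Lemma~\ref{lemma: int closure props}(2), $a\in\o A$ means $A+\langle a\rangle\sim A$. Proposition~\ref{prop: Newton polygon} then gives $N_p(A+\langle a\rangle)=N_p(A)$ for every prime $p$. On the other hand $m_{p,v_p(a)}(A+\langle a\rangle)\le a$. Since the Newton polygon is the upward-closed lower hull (as implicitly used in Lemma~\ref{lemma: support function translation}), the point $V_p(a)$ lies in $N_p(A+\langle a\rangle)=N_p(A)$. Alternatively one can check the support-function inequality of Lemma~\ref{lemma: support function translation} via \eqref{eqn: alt formula for Phi}.

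\emph{(3)$\Rightarrow$(1).} I would first reduce to numerical semigroups. Write $A=gB$ by Lemma~\ref{lemma: N ideal is principal times NS}, and dispose of $A=0$ trivially. For $p\mid g$ we have $m_{p,k}(A)=\infty$ for $k<v_p(g)$, so $V_p(a)\in N_p(A)$ forces $v_p(a)\ge v_p(g)$; hence $g\mid a$. The conditions are invariant under rescaling: $N_p(gB)$ is $N_p(B)$ translated by $V_p(g)$, and $a^k\in g^kB^k$ if and only if $(a/g)^k\in B^k$. So it suffices to treat $A$ a numerical semigroup.

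Next I would convert the Newton polygon condition into elements of powers. Fix $p\mid m_0(A)$; only these primes matter, since for the others the condition reads $a\ge m_0(A)$. The vertices of $N_p(A)$ have integer abscissae, so $V_p(a)\in N_p(A)$ gives rational weights $t_i\ge 0$ with $\sum t_i=1$ and $\sum t_ik_i\le v_p(a)$ such that $\prod m_{p,k_i}(A)^{t_i}\le a$. Clearing denominators, for all $k$ divisible by some $N$ there is $b_{p}\in A^k$, a product of the generators $m_{p,k_i}(A)$, with $v_p(b_p)\le k\,v_p(a)$ and $b_p\le a^k$. Also $m_0(A)^k\in A^k$.

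The task is then to show $a^{k}\in A^{k}$ for some suitable $k$, and I would mimic the proof of Theorem~\ref{thm: frob asymptotic}. Consider the two-generator semigroups $\langle m_0(A)^k, b_p\rangle\subseteq A^k$, or rather their powers after dividing by gcds. By Theorem~\ref{thm: two-gen powers}, each is scaled by a factor $d_p$ prime to $p$ and has Apéry elements bounded by a constant times $\max(b_p,\dots)$. Summing these Apéry sets over $p\mid m_0(A)$ covers every residue class, just as in \eqref{eqn: H sum}. I would arrange that the residue of $a^{k}$ is hit by a sum whose size is at most $a^{k}$, using that $v_p(b_p)\le v_p(a^k)$ so that the $p$-parts are compatible.

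\emph{Main obstacle.} The multiplicative constants $c$ coming from Frobenius bounds. These are harmless when $V_p(a)$ lies strictly inside $N_p(A)$: then $b_p\le \theta^k a^k$ for some $\theta<1$, and replacing $k$ by a large multiple absorbs $c$, exactly as in Corollary~\ref{cor: mmax criteria}. The boundary case, where $V_p(a)$ lies on an edge of $N_p(A)$, is where I expect trouble. There I would try to avoid Frobenius bounds altogether by an exact construction: write $a^k$ directly as a product $\prod_i m_{p,k_i}(A)^{kt_i}$ times a cofactor that is itself expressible in $A^{k'}$ via the interior case applied to the remaining primes. If that fails, I would fall back on the semi-decision colon-ideal construction of Lemma~\ref{lemma: colon ideal} to produce a witness ideal, and then argue that a witness yields a power certificate.
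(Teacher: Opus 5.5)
Your (1)$\Rightarrow$(2) and (2)$\Rightarrow$(3) are fine. Routing the latter through $A+\langle a\rangle\sim A$ is a harmless variant of the paper's use of $N_p(A)=N_p(\o A)$. Your construction of $b_p\in A^k$ with $b_p\le a^k$ and $v_p(b_p)\le k\,v_p(a)$ is exactly the paper's. The gap is the assembly step of (3)$\Rightarrow$(1): you never carry it out, and the plan you sketch fails as stated.

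First, $d_p=\gcd(m_0(A)^k,b_p)$ need not be prime to $p$. For $A=\langle 8,11\rangle$, $a=10$, $p=2$, the construction gives $b_2=11^2\cdot 8=968\le 10^3$ and $\gcd(8^3,968)=8$, so the analogue of \eqref{eqn: H sum} breaks. More fundamentally, covering every residue class is the wrong target. If $W\subseteq A^K$ meets every class mod $m_0(A)^K$, then $\max W\ge\max\ap(A^K)>F(A^K)$, and $F(A^K)$ grows like $\mmax(A)^K$ by Theorem~\ref{thm: frob asymptotic}. So no such covering can certify $a^K\in A^K$ when $a<\mmax(A)$, which is the only case not already settled by Corollary~\ref{cor: mmax criteria}. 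Your sentence about arranging that the residue of $a^k$ is hit ``using that $v_p(b_p)\le v_p(a^k)$'' is where the entire proof lives, and it is left undone.

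The missing idea is to divide out the common factor rather than fight it. Bring all $b_p$ to a common exponent $e$, with $b_p:=m_0(A)^e$ for $p\nmid m_0(A)$. Then $b_p\in A^e$, $b_p\le a^e$ and $v_p(b_p)\le e\,v_p(a)$ for every prime $p$; set $g:=\gcd_p b_p$.
\begin{itemize}
\item Since $v_p(g)\le e\,v_p(a)$ for all $p$, we get $a^e=ga'$ with $a'\in\NN$.
\item $B':=\langle b_p/g\rangle$ is a numerical semigroup all of whose generators are at most $a'$.
\item Lemma~\ref{lemma: m_pk among generators} gives $\mmax(B')\le a'$, and Corollary~\ref{cor: mmax criteria} gives ${a'}^k\in{B'}^k$ for some $k$.
\item Hence $a^{ek}=g^k{a'}^k\in g^k{B'}^k\subseteq A^{ek}$.
\end{itemize}
In the example, $g=8$, $a'=125$ and $B'=\langle 64,121\rangle$.

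This also dissolves your ``main obstacle.'' The only dichotomy needed is $\mmax(B')=a'$, in which case $a'\in B'$ outright, versus $\mmax(B')<a'$, in which case $F({B'}^k)<{a'}^k$ eventually and all constants are absorbed. In fact the boundary is the easy case. If $V_p(a)$ lies on the lower boundary of $N_p(A)$, the convexity inequality is an equality, so $a^{e_p}=b_p\in A^{e_p}$ directly.

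Neither fallback would have rescued you. The colon-ideal iteration has no known termination bound, and at best it produces a witness, i.e.\ statement (2). ``A witness yields a power certificate'' is exactly (2)$\Rightarrow$(1), which the paper obtains only by passing through (3).
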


\begin{proof}
    Proposition~\ref{prop: power certificate} implies (1) $\Rightarrow$ (2).
    Proposition \ref{prop: Newton polygon} and Proposition \ref{prop: noetherian implies similar to int closure} imply that $N_p(A) = N_p(\o A)$.
    If $a \in \o A$, then $V_p(a) \in N_p(\o A) = N_p(A)$ for all primes $p$, which proves (2) $\Rightarrow$ (3).
    Hence it suffices to prove (3) $\Rightarrow$ (1).
    Note that if $A = \langle 0\rangle$, then the assertion is immediate.
    
    So suppose that $A$ is a nonzero ideal and that $p$ is a prime.
    By assumption we have $V_p(a) \in N_p(A)$.
    If $v_p(a) \geq v_p(m_0(A))$ and $m_0(A) \leq a$, let $e_p = 1$ and $b_p = m_0(A)$.
    Otherwise there is some $k_1 < k_2$ such that the point $(v_p(a), \log a)$ lies above the line segment connecting $(k_1,\log m_{p,k_1}(A))$ and $(k_2,\log m_{p,k_2}(A))$.
    Let $w_i := m_{p,k_i}(A)$ and note that $k_i = v_p(w_i)$.
    This geometric assertion about $V_p(a)$ translates to
    \[
        \frac{k_2 - v_p(a)}{k_2-k_1} \log w_1 + \frac{v_p(a) - k_1}{k_2 - k_1}\log w_2 \leq \log a,
    \]
    or equivalently
    \[
        b_p := w_1^{k_2-v_p(a)}w_2^{v_p(a) - k_1} \leq a^{k_2 - k_1}.
    \]
    Let $e_p := k_2 - k_1$.
    Finally, observe that $w_1, w_2 \in A$ imply that $b_p \in A^{e_p}$ and
    \[
        v_p(b_p) = (k_2 - v_p(a))k_1 + (v_p(a) - k_1)k_2 = e_pv_p(a).
    \]
    If $p\nmid m_0(A)$, then $V_p(a) \in N_p(A)$ is equivalent to $m_0(A) \leq a$.
    Since $0 = v_p(m_0(A)) \leq v_p(a)$, we can let $e_p = 1$ and $b_p = m_0(A)$.

    Thus for each prime $p$ there is an element $b_p$ and an integer $e_p \geq 1$ such that $b_p \leq a^{e_p}$ and $v_p(b_p) \leq e_pv_p(a)$.
    Since $e_p = 1$ for all but finitely many primes $p$, $e := \lcm_p e_p$ exists.
    Replacing $b_p$ with $b_p^{e/e_p}$ we have $b_p \leq a^e$ and $v_p(b_p) \leq e v_p(a)$ for all primes $p$.
    Let $g := \gcd_p b_p$ and write $b_p = g b_p'$.
    Then $v_p(g) \leq e v_p(a)$ for all primes $p$, which is to say that $a^e = ga'$ for some $a' \in \NN$.
    Consider the ideal $B := \langle b_p : p \text{ prime}\rangle \subseteq A^e$ and the numerical semigroup $B' = \langle b_p' : p\text{ prime}\rangle$.
    Then $B = gB'$.
    Since $b_p' \leq a'$ for all $p$, Lemma \ref{lemma: m_pk among generators} implies that $\mmax(B') \leq a'$.
    Thus Proposition~\ref{prop: power certificate} implies there is some $k \geq 1$ such that ${a'}^k \in {B'}^k$.
    Hence
    \[
        a^{ek} = g^k {a'}^k \in g^k {B'}^k = B^k \subseteq A^{ek}.\qedhere 
    \]
\end{proof}

Theorem~\ref{thm: integral closure} implies that types (1), (2), and (3) integrality are equivalent for ideals in $\NN$.
The characterization of $\o A$ in terms of Newton polygons is analogous to the theorem characterizing the integral closure of a monomial ideal in terms of the Newton polyhedron defined by its exponent vectors \cite[Prop. 1.4.6]{huneke2006integral}.
From this perspective, natural numbers can be thought of as monomials in the primes.
Translating between Newton polygons and the support functions, Theorem~\ref{thm: integral closure} may also be viewed as an analog of Rees's theorem characterizing integral closures of ideals in Noetherian rings (original proof by Rees \cite{rees1956valuations}, see Huneke and Swanson \cite[Thm. 10.2.2]{huneke2006integral} for a contemporary statement).
The functions $\log \Phi_{p,\lambda}$ define tropical valuations on ideals of $\NN$, and Theorem~\ref{thm: integral closure} asserts that finitely many of these valuations determine the integral closure of $A$, namely those corresponding to nontrivial slopes of $N_p(A)$, which only exist for the finitely many primes $p \mid m_0(A)$.

\begin{example}
\label{ex: 8, 11}
    Let $A = \langle 8, 11\rangle$.
    Then every $a \geq 11$ belongs to $\o A$ by Corollary~\ref{cor: mmax criteria} and it remains to decide whether 9 or 10 belong to $\o A$.
    The only relevant prime here is $p = 2$.
    The Newton polygon $N_2(A)$ consists of a single segment and a computation shows that $V_2(9) \notin N_2(A)$ and $V_2(10) \in N_2(A)$.
    Therefore,
    \[
        \o A = \{0, 8, 10, 11, 12,\ldots\}.
    \]
    This example nicely highlights the value of our theoretical analysis of integral closures.
    If we try to use the iterated colon ideal method to find an ideal $B$ such that $10B \subseteq AB$, the convergence happens incredibly slowly.
    The sequence $B_k$ strictly decreases for at least $0 \leq k \leq 20$.
    The ideal $B_{20}$ has 370 minimal generators and multiplicity 512.
    It is unclear how many more iterates are required to reach stabilization; the computations become prohibitively slow.
    Theorem~\ref{thm: integral closure} shows that $10^k \in A^k$ for some sufficiently large $k$.
    Using the characterization of the Ap\'ery set of $\langle 8, 11\rangle^k$ provided by Theorem~\ref{thm: two-gen powers}(3) one may show that the first $k$ for which $10^k \in A^k$ is $k = 198$.
\end{example}

If $m$ is squarefree, the Newton polygon conditions are vacuous.

\begin{corollary}
    Let $S := \langle m, n\rangle$ where $2 \leq m < n$ are coprime.
    If $m$ is squarefree, then
    \[
        \o S = \langle m\rangle + \{k \in \NN : k \geq n\}.
    \]
\end{corollary}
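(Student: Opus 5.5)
The plan is to prove both inclusions using Theorem~\ref{thm: integral closure} together with Corollary~\ref{cor: mmax criteria}.

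\textbf{The inclusion $\supseteq$.} Since $m \in S \subseteq \o S$ and $\o S$ is an ideal by Lemma~\ref{lemma: int closure is ideal}, we get $\langle m \rangle \subseteq \o S$. Next, for each prime $p$ we have $m_p(S) \in \{m, n\}$, so $\mmax(S) = n$ (indeed $m_p(S)=n$ for any $p \mid m$, which exists because $m\geq 2$). Corollary~\ref{cor: mmax criteria} then puts every $k \geq n$ in $\o S$. Finally, $\o S$ is closed under addition, and sums of elements of the form $cm$ with elements $\geq n$ stay in the displayed union. Hence the right-hand side, which is the set $\{cm : c \in \NN\} \cup \{k \geq n\}$, lies in $\o S$. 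It is worth noting that this set is itself closed under addition, so it is an ideal.

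\textbf{The inclusion $\subseteq$.} Let $a \in \o S$ with $a < n$. We must show $m \mid a$. If $a = 0$ there is nothing to prove, so assume $a>0$. Fix a prime $p \mid m$. Because $m$ is squarefree, $v_p(m) = 1$, and because $\gcd(m,n)=1$, $v_p(n) = 0$.

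We first compute the Newton data. By Lemma~\ref{lemma: m_pk among generators}, every $m_{p,k}(S)$ is a generator. This gives $m_{p,0}(S) = n$ and $m_{p,k}(S) = m$ for $k \geq 1$. So $N_p(S)$ is the lower convex hull of $(0,\log n)$ and $(k,\log m)$ for $k\ge1$, i.e.\ the region above the segment from $(0,\log n)$ to $(1,\log m)$ and the horizontal ray $y \ge \log m$ for $x \ge 1$. (Implicitly it also contains everything above these, in the upward-closed sense used in the paper.)

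By Theorem~\ref{thm: integral closure}, $V_p(a) = (v_p(a), \log a) \in N_p(S)$. If $v_p(a) = 0$, this forces $\log a \geq \log n$, contradicting $a < n$. Hence $v_p(a) \geq 1$, i.e.\ $p \mid a$. This holds for every prime $p \mid m$, so by squarefreeness $m = \prod_{p\mid m} p$ divides $a$. Thus $a \in \langle m\rangle$.

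\textbf{Main obstacle.} The only delicate point is pinning down the convention that $N_p$ is upward/rightward closed, so that membership at $x=0$ genuinely requires $y \geq \log n$. This can be checked via Lemma~\ref{lemma: support function translation}. Taking $\lambda \to \infty$ with $v_p(a) = 0$, we get $\Phi_{p,\lambda}(S) = \min(n, m\lambda) \to n$. So the condition $\Phi_{p,\lambda}(S) \leq a$ for all $\lambda \geq 1$ forces $a \geq n$, which is exactly what the argument needs and avoids any ambiguity in the polygon convention.
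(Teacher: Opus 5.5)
Your proof is correct and follows essentially the same route as the paper: Corollary~\ref{cor: mmax criteria} handles every $a \geq n$. For $0 < a < n$, the single-segment shape of $N_p(S)$ for each prime $p \mid m$, combined with Theorem~\ref{thm: integral closure}, forces $v_p(a) \geq 1$, and squarefreeness then gives $m \mid a$. You are slightly more thorough than the paper, since you explicitly check $\mmax(S) = n$, prove the reverse inclusion, and confirm the upward-closed convention for $N_p$ via Lemma~\ref{lemma: support function translation}.
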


\begin{proof}
    If $m$ is squarefree, then for each prime $p \mid m$, the Newton polygon $N_p(S)$ consists of a single segment of width one.
    Corollary \ref{cor: mmax criteria} implies that every $a \geq n$ belongs to $\o S$, and clearly no $a < m$ belongs to $\o S$.
    Thus if $m < a < n$ and $V_p(a) \in N_p(S)$, then $v_p(a) \geq v_p(m) = 1$.
    Hence if such an $a$ belongs to $\o S$, it follows that $m \mid a$.
    Thus
    \(
        \o S = \langle m\rangle + \{k \in \NN : k \geq n\}.
    \)
\end{proof}

Although $\o {AB} \neq \o A \cdot \o B$ in general, this does hold for scaling.

\begin{proposition}
\label{prop: scale integral closure}
    If $A \subseteq \NN$ is a numerical semigroup and $g \in \NN$, then $\o {gA} = g\o A$.    
\end{proposition}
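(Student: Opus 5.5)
The natural route is through the power criterion of Theorem~\ref{thm: integral closure}: $x \in \o{C}$ if and only if $x^k \in C^k$ for some $k \geq 1$. The case $g = 0$ is trivial, since both sides equal $\langle 0 \rangle$; note that $\o{\langle 0\rangle} = \langle 0\rangle$ because $x^k \in \langle 0\rangle^k$ forces $x = 0$. So assume $g \geq 1$. We prove the two inclusions separately.

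For $g\o A \subseteq \o{gA}$, take $a \in \o A$. Theorem~\ref{thm: integral closure} gives some $k$ with $a^k \in A^k$. Then
\[
(ga)^k = g^k a^k \in g^k A^k = (gA)^k,
\]
so $ga \in \o{gA}$ by Proposition~\ref{prop: power certificate} (or again by Theorem~\ref{thm: integral closure}).

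For the reverse inclusion, take $x \in \o{gA}$ and pick $k$ with $x^k \in (gA)^k = g^k A^k$. Then $g^k \mid x^k$. Comparing $p$-adic valuations, $k v_p(g) \leq k v_p(x)$ for every prime $p$, so $g \mid x$. Write $x = ga$ with $a \in \NN$. From $g^k a^k \in g^k A^k$ and cancellation of the positive integer $g^k$, we get $a^k \in A^k$. Hence $a \in \o A$ and $x \in g\o A$.

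The only step needing any care is the divisibility $g^k \mid x^k \Rightarrow g \mid x$. This is routine via valuations, so there is no real obstacle once Theorem~\ref{thm: integral closure} is available. The hypothesis that $A$ is a numerical semigroup is not needed beyond $A$ being an ideal.
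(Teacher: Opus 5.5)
Your proof is correct, but it takes a different route from the paper's.

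\textbf{Your route.} You go through the power criterion of Theorem~\ref{thm: integral closure}, which reduces both inclusions to arithmetic in $\NN$. Forward: $(ga)^k = g^k a^k \in g^k A^k = (gA)^k$. Reverse: $x^k \in g^k A^k$ forces $g^k \mid x^k$, hence $g \mid x$, and then you cancel $g^k$.

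\textbf{The paper's route.} It works directly from Definition~\ref{def: ideal integral}. The inclusion $g\o A \subseteq \o{gA}$ is simply $gaB \subseteq gAB$. For the reverse, it takes a witness $B$ with $aB \subseteq gAB$ and writes $B = hC$ with $C$ a numerical semigroup (Lemma~\ref{lemma: N ideal is principal times NS}). It cancels $h$, then uses that $C$ contains two consecutive integers $n, n+1$ to get $g \mid an$ and $g \mid a(n+1)$, hence $g \mid a$. Finally it cancels $g$ to obtain a witness for $a/g \in \o A$.

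\textbf{What each buys.} The paper's argument is elementary and self-contained: it needs only the definition and cofiniteness of numerical semigroups. Your argument is shorter and makes the divisibility step a clean valuation comparison. However, its reverse inclusion rests on the hard implication $a \in \o A \Rightarrow a^k \in A^k$. That implication goes through the Newton polygon characterization and Corollary~\ref{cor: mmax criteria}, and hence the Frobenius asymptotics of Theorem~\ref{thm: frob asymptotic}.

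There is no circularity, since the proof of Theorem~\ref{thm: integral closure} does not use this proposition. Your forward inclusion does not need the theorem at all. Your observation that only ``$A$ is an ideal'' is needed applies equally to the paper's proof.
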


\begin{proof}
    If $g = 0$, then $\o{gA} = \{0\} = g\o A$.
    Now suppose that $g \neq 0$.
    If $a \in \o A$, then there exists a nonzero ideal $B$ such that $aB \subseteq AB$.
    Hence $gaB \subseteq gAB$, and we conclude that $ga \in \o {gA}$.
    Thus $g\o A \subseteq \o {gA}$.
    Conversely, suppose $a \in \o {gA}$.
    Let $B$ be a nonzero ideal such that $aB \subseteq gAB$.
    Lemma~\ref{lemma: N ideal is principal times NS} implies we can express $B = hC$ for some nonzero $h$ and some numerical semigroup $C$.
    Thus
    \[
        ahC \subseteq ghAC.
    \]
    Since $h \neq 0$, we can cancel it from both sides of this containment to get
    \[
        aC \subseteq gAC.
    \]
    The numerical semigroup $C$ contains $n$ and $n+1$ for some sufficiently large $n$.
    The above containment implies that $g$ divides both $an$ and $a(n+1)$.
    Thus $a = gc$ for some $c \in \NN$ and substitution yields
    \[
        gcC \subseteq gAC.
    \]
    Canceling $g$ we see that $c \in \o A$.
    Therefore $a = gc \in g \o A$, completing our proof that $g\o A = \o {gA}$.
\end{proof}

\printbibliography
\end{document}